\numberwithin{equation}{section}
\newtheorem{thm}{Theorem}[section]
\newtheorem{cor}[thm]{Corollary}
\newtheorem{lem}[thm]{Lemma}
\newtheorem{prop}[thm]{Proposition}
\newtheorem{rem}[thm]{\bf{Remark}}
\numberwithin{equation}{section}
\let \a=\alpha
\begin{document}

\leftline{ \scriptsize}

\vspace{1.3 cm}
\title
{  Orthogonal bases of Brauer symmetry classes of tensors for
 groups having cyclic support on non-linear Brauer characters}
\author{M. Hormozi and K. Rodtes}
\thanks{{\scriptsize
\hskip -0.4 true cm MSC(2000):Primary   20C30; Secondary 15A69
\newline Keywords: Brauer symmetry classes of tensors , Orthogonal basis , Semi-dihedral groups, Dicyclic groups
}}
\hskip -0.4 true cm

\maketitle


\begin{abstract}
This paper provides some properties of Brauer symmetry classes of tensors. We derive a dimension formula for the orbital subspaces in the Brauer symmetry classes of tensors corresponding to the irreducible Brauer characters of the groups having cyclic groups support on non-linear Brauer characters. Using the derived formula, we investigate the necessary and sufficient condition for the existence of the o-basis of Dicyclic groups, Semi-dihedral groups and also reinvestigate those things on Dihedral groups. Some criteria for the non-vanishing elements in the Brauer symmetry classes of tensors associated to those groups are also included.
\end{abstract}

\vskip 0.2 true cm


\pagestyle{myheadings}
\markboth{\rightline {\scriptsize  Mahdi Hormozi and Kijti Rodtes}}
         {\leftline{\scriptsize }}
\bigskip
\bigskip


\vskip 0.4 true cm

\section{Introduction}
During the past decades, there are many papers devoted to study symmetry classes of tensors, see, for example, [1-9].  One of the active research topics is the investigation of the special basis (o-basis) for the classes.  This basis consists of decomposable symmetrized tensors which are images of the symmetrizer using an irreducible character of a given group.  In \cite{HK}, Randall R. Holmes and A. Kodithuwakku studied symmetry classes of tensors using an irreducible
Brauer character of the Dihedral group instead of an ordinary irreducible character and gave necessary and sufficient conditions
for the existence of an o-basis. A classical method to find the conditions applies the dimension of the orbital subspaces in order to find an o-basis for each orbit separately. A main tool for computing the dimension of symmetry classes using ordinary characters is the Freese's theorem \cite{F2}.  Unfortunately, the symmetrizer using Brauer characters is not (in general) idempotent, so the Freese's theorem can not be applied directly.  However, for the case of non-linear Brauer characters of Dihedral groups, the authors in \cite{HK} decomposed them into a sum of ordinary characters and used generalized Freese's theorem to bound the dimension.

 One common property for all non-linear Brauer characters of Dihedral groups is the vanishing outside some cyclic subgroups. Many finite groups, including Dicyclic groups and Semi-dihedral groups, satisfy this property. In this paper, we investigate the existence of an o-basis of Brauer symmetry classes of tensors associated with the groups having the stated property. Some general properties of Brauer symmetry classes of tensors are stated. For the non-linear case, we decompose the orbital subspaces of Brauer symmetry classes of tensors into the orthogonal direct sum of smaller factors and then provide a  dimension formula for each of them. The necessary and sufficient condition for the existence of o-basis for Dicyclic groups, Semi-dihedral groups and Dihedral groups are investigated and reinvestigated as an application of the formula.  Some criteria for the non-vanishing elements in the Brauer symmetry classes of tensors associated to these groups are also included.

\section{Preliminaries}
Let $G$ be a subgroup of the full symmetric group $S_{m}$ and $p$ be a fixed prime number. An element of $G$ is \textit{$p$-regular} if its order
is not divisible by $p$. Denote by $\hat{G}$
the set of all $p$-regular elements of $G$.
Let IBr$(G)$ denote the set of irreducible Brauer characters of $G$. A Brauer
character is a certain function from $\hat{G}$ to $\textbf{C}$ associated with an $FG$-module
where $F$ is a suitably chosen field of characteristic $p$. The Brauer character
is irreducible if the associated module is simple. A conjugacy class of $G$ consisting of $p$-regular elements is called a $p$-regular class. The number of irreducible Brauer characters of $G$ equals the number of $p$-regular classes of $G$.
Let Irr$(G)$ denote the set of irreducible characters of $G$. (Unless preceded
by the word "Brauer", the word "character" always refers to an ordinary character.) If the order of $G$ is not divisible by $p$, then $\hat{G}=G$
and IBr$(G)$ =Irr$(G)$.
Let $S$ be a subset of $G$ containing the identity element $e$ and let $\phi: S \rightarrow \mathbb{C}$
be a fixed function. Statements below involving $\phi$ will hold in particular if $\phi$ is a character of $G$ (in which case $S = G$) and also if $\phi$ is a Brauer character
of $G$ (in which case $S = \hat{G}$). During the last few years, many very interesting results on the topic of Brauer
characters have been found (see e.g. \cite{Bra1} and \cite{Bra2}- \cite{Bra9}).
\\

Let $V$ be a $k$-dimensional complex inner product space and $\{e_1, . . . , e_k \}$ be an orthonormal basis of $V$. Let $\Gamma^{m}_k$ be the set of all sequences $\alpha = (\alpha_1,...,\alpha_m)$, with $1  \leq \alpha_i\leq k$. Define the action of $G$ on $\Gamma^{m}_k$ by
$$
\alpha\sigma =(\alpha_{\sigma(1)},..., \alpha_{\sigma(m)} ).
$$
We denote by $G_\alpha$ the \textit{stabilizer subgroup} of $\alpha$, i.e. $G_\alpha= \{\sigma \in G| \alpha\sigma=\alpha\}$.   The space $V^{\otimes m}$ is a left $\mathbb{C}G$-module with the action given $\sigma e_{\gamma}=e_{\gamma\sigma^{-1}}$ ($\sigma \in G, \gamma \in \Gamma^{m}_{k}$) extended linearly.  The inner product on $V$ induces an inner product on $V^{\otimes m}$ which is $G$-invariant and, with respect to this inner product, the set $\{e_{\alpha} | \alpha \in \Gamma^{m}_k\}$ is an orthonormal basis for $V^{\otimes m}$, where $e_\alpha=e_{\alpha_1}\otimes\cdots\otimes e_{\alpha_m}$.\\

The \textit{symmetrizer} corresponding to $\phi$ and $S\subseteq G$ is the element $s_\phi$ of the group algebra $\mathbb{C}G$ given by

\begin{equation}
\label{e1}
s_\phi= \frac{\phi(e)}{|S|}\sum_{\sigma \in S}\phi(\sigma)\sigma.
\end{equation}
Corresponding to $\phi$ and $\alpha \in \Gamma^{m}_k$, the \textit{standard (or decomposable)symmetrized tensor} is
\begin{equation}
\label{e1111}
e^{\phi}_{\alpha}=s_\phi e_{\alpha}= \frac{\phi(e)}{|S|}\sum_{\sigma \in S}\phi(\sigma)e_{\alpha\sigma^{-1}}.
\end{equation}
The \textit{symmetry class of tensors} associated with $\phi$ and $S\subseteq G$ is
$$
V_{\phi}(G)=s_\phi V^{\otimes m}= \langle e^{\phi}_{\alpha} | \alpha \in \Gamma^{m}_k\rangle.
$$
If $\phi$ is a Brauer character, we refer to this as a \textit{Brauer symmetry class of tensors}.
The orbital subspace of $V_\phi(G)$ corresponding to $\alpha \in \Gamma^{m}_k$ is
$$
V^{\phi}_\alpha(G)=\langle e^{\phi}_{\alpha \sigma} | \sigma \in G \rangle.
$$
An \emph{o-basis} of a subspace $W$ of $V_\phi(G)$ is an orthogonal basis of $W$ of the
form $\{e^{\phi}_{\alpha_1},\dots,e^{\phi}_{\alpha_ t}\}$ for some $\alpha_i \in \Gamma^{m}_k$.  By convention, the empty set is
regarded as an o-basis of the zero subspace of $V_\phi(G)$.  Let $\Delta = \Delta_G$ be a set of representatives of the orbits of $\Gamma^{m}_k$ under the
action of $G$.

The following is a main theorem used to reduce the task of investigation on the existence of an o-basis.
\begin{thm}\label{thm111} We have an orthogonal  sum decomposition
\label{sum}
$$
V_{\phi}(G)=  \sum_{\alpha \in \Delta} V^{\phi}_\alpha(G).
$$
\end{thm}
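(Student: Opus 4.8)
The plan is to split the statement into two claims---that the subspaces $V^{\phi}_\alpha(G)$, $\alpha\in\Delta$, together span $V_\phi(G)$, and that they are pairwise orthogonal---and then combine them.

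For the spanning claim I would argue directly from the generating sets. By definition $V_\phi(G)=\langle e^{\phi}_\gamma : \gamma\in\Gamma^m_k\rangle$, and each $V^{\phi}_\alpha(G)$ is contained in $V_\phi(G)$ because its generators $e^{\phi}_{\alpha\sigma}=s_\phi e_{\alpha\sigma}$ lie in $s_\phi V^{\otimes m}$. Conversely, since $\Delta$ meets every $G$-orbit on $\Gamma^m_k$, any $\gamma\in\Gamma^m_k$ has the form $\gamma=\alpha\sigma$ with $\alpha\in\Delta$ and $\sigma\in G$, so $e^{\phi}_\gamma=e^{\phi}_{\alpha\sigma}\in V^{\phi}_\alpha(G)\subseteq\sum_{\alpha\in\Delta}V^{\phi}_\alpha(G)$. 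Hence $V_\phi(G)=\sum_{\alpha\in\Delta}V^{\phi}_\alpha(G)$.

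For orthogonality it suffices to show that a generator $e^{\phi}_{\alpha\sigma}$ of $V^{\phi}_\alpha(G)$ is orthogonal to a generator $e^{\phi}_{\beta\tau}$ of $V^{\phi}_\beta(G)$ whenever $\alpha,\beta\in\Delta$ lie in different $G$-orbits. Expanding both tensors via \eqref{e1111} and using that $\{e_\delta : \delta\in\Gamma^m_k\}$ is an orthonormal basis of $V^{\otimes m}$,
\[
\langle e^{\phi}_{\alpha\sigma},e^{\phi}_{\beta\tau}\rangle=\frac{|\phi(e)|^2}{|S|^2}\sum_{\rho,\mu\in S}\phi(\rho)\overline{\phi(\mu)}\,\langle e_{\alpha\sigma\rho^{-1}},e_{\beta\tau\mu^{-1}}\rangle ,
\]
and a term survives only if $\alpha\sigma\rho^{-1}=\beta\tau\mu^{-1}$ as sequences in $\Gamma^m_k$. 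Using that the $G$-action $\alpha\sigma=(\alpha_{\sigma(1)},\dots,\alpha_{\sigma(m)})$ is a right action, this equality rearranges to $\alpha=\beta(\tau\mu^{-1}\rho\sigma^{-1})$; since $S\subseteq G$ the element $\tau\mu^{-1}\rho\sigma^{-1}$ lies in $G$, so this would put $\alpha$ and $\beta$ in the same $G$-orbit, contrary to assumption. Therefore every term vanishes, $\langle e^{\phi}_{\alpha\sigma},e^{\phi}_{\beta\tau}\rangle=0$, and $V^{\phi}_\alpha(G)\perp V^{\phi}_\beta(G)$.

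Combining the two parts yields the orthogonal sum decomposition $V_\phi(G)=\sum_{\alpha\in\Delta}V^{\phi}_\alpha(G)$. I do not expect a genuine obstacle here: the argument is essentially bookkeeping with the right action and its inversion. The one point worth flagging is that, for the proof to cover Brauer characters as well as ordinary characters, it must use nothing about $S$ beyond $e\in S\subseteq G$; and indeed the only structural input above is $S\subseteq G$, which is exactly what guarantees $\tau\mu^{-1}\rho\sigma^{-1}\in G$ in the orthogonality step.
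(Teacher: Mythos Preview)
Your argument is correct: the spanning step is immediate from the fact that $\Delta$ meets every orbit, and the orthogonality step is a clean computation using only $S\subseteq G$, exactly as you note. The paper itself does not give an independent proof of this theorem but simply cites \cite[Thm.~1.1]{HK}; your write-up is the standard direct verification that one would expect that reference to contain, so there is nothing to contrast.
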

\begin{proof} See \cite[Thm. 1.1]{HK}.
\end{proof}
The induced inner product on $V_{\phi}(G)$ can be calculated the formula below, which is an adaptation from the Theorem 1.2 in \cite{HK}.
\begin{thm}
\label{t3}
 For every $\alpha \in \Gamma^{m}_k$ and $\sigma_1, \sigma_2\in G$ we have
  \begin{equation}\label{innert3}
 \langle e^{\phi}_{\alpha \sigma_1}, e^{\phi}_{\alpha \sigma_2} \rangle = \frac{|\phi(e)|^2}{|S|^2}\sum_{\mu \in S} \sum_{  \tau \in  \sigma_{1}\mu^{-1} S \sigma^{-1}_2  \cap G_{\alpha}} \phi(\mu) \overline{\phi(\mu\sigma^{-1}_{1} \tau\sigma_{2})}.
\end{equation}
\end{thm}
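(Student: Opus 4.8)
The plan is to expand both symmetrized tensors by the defining formula \eqref{e1111}, move the inner product inside the resulting double sum by sesquilinearity, collapse the elementary inner products using orthonormality of the basis $\{e_\gamma\}$, and finally reindex one of the two summations.

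First I would write, applying \eqref{e1111} to $\alpha\sigma_1$ and to $\alpha\sigma_2$ and using that the $G$-action on $\Gamma^m_k$ is a right action,
\[
e^{\phi}_{\alpha\sigma_1}=\frac{\phi(e)}{|S|}\sum_{\mu\in S}\phi(\mu)\,e_{\alpha\sigma_1\mu^{-1}},\qquad
e^{\phi}_{\alpha\sigma_2}=\frac{\phi(e)}{|S|}\sum_{\nu\in S}\phi(\nu)\,e_{\alpha\sigma_2\nu^{-1}},
\]
so that
\[
\langle e^{\phi}_{\alpha\sigma_1},e^{\phi}_{\alpha\sigma_2}\rangle
=\frac{|\phi(e)|^2}{|S|^2}\sum_{\mu\in S}\sum_{\nu\in S}\phi(\mu)\,\overline{\phi(\nu)}\,\langle e_{\alpha\sigma_1\mu^{-1}},e_{\alpha\sigma_2\nu^{-1}}\rangle .
\]
Since $\{e_\gamma\mid\gamma\in\Gamma^m_k\}$ is an orthonormal basis of $V^{\otimes m}$, the factor $\langle e_{\alpha\sigma_1\mu^{-1}},e_{\alpha\sigma_2\nu^{-1}}\rangle$ is $1$ if $\alpha\sigma_1\mu^{-1}=\alpha\sigma_2\nu^{-1}$ and $0$ otherwise, and the equality $\alpha\sigma_1\mu^{-1}=\alpha\sigma_2\nu^{-1}$ is equivalent to $\alpha(\sigma_1\mu^{-1}\nu\sigma_2^{-1})=\alpha$, i.e.\ to $\sigma_1\mu^{-1}\nu\sigma_2^{-1}\in G_\alpha$. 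Thus only the pairs $(\mu,\nu)$ with $\sigma_1\mu^{-1}\nu\sigma_2^{-1}\in G_\alpha$ survive.

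The last step is a change of variable: for each fixed $\mu\in S$ set $\tau=\sigma_1\mu^{-1}\nu\sigma_2^{-1}$. Because left and right translations by the group elements $\sigma_1,\mu,\sigma_2$ are bijections of $G$, the assignment $\nu\mapsto\tau$ is a bijection from $S$ onto $\sigma_1\mu^{-1}S\sigma_2^{-1}$, with inverse $\tau\mapsto\nu=\mu\sigma_1^{-1}\tau\sigma_2$; in particular $\mu\sigma_1^{-1}\tau\sigma_2\in S$, so $\phi(\mu\sigma_1^{-1}\tau\sigma_2)$ is defined. Under this substitution the condition $\sigma_1\mu^{-1}\nu\sigma_2^{-1}\in G_\alpha$ becomes $\tau\in G_\alpha$, the summand $\phi(\mu)\overline{\phi(\nu)}$ becomes $\phi(\mu)\overline{\phi(\mu\sigma_1^{-1}\tau\sigma_2)}$, and letting $\tau$ run over $\sigma_1\mu^{-1}S\sigma_2^{-1}\cap G_\alpha$ gives exactly \eqref{innert3}.

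The main point to watch is that $S$ is not assumed to be a subgroup of $G$ --- in the Brauer case $S=\hat G$ is generally not closed under multiplication --- so the index set $S$ cannot be manipulated as a group. The argument avoids this by only ever translating $S$ by elements of the ambient group $G$, for which the relevant maps are genuine bijections; the set $\sigma_1\mu^{-1}S\sigma_2^{-1}$ is then merely the image of $S$ under such a bijection, and the fact that $\mu\sigma_1^{-1}\tau\sigma_2$ lands back in $S$ (so that $\phi$ can be evaluated there) is automatic. This is the adaptation of the proof of \cite[Thm.\ 1.2]{HK} from $S=G$ to an arbitrary subset $S\ni e$.
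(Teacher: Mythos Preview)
Your proof is correct and follows essentially the same route as the paper's: expand both symmetrized tensors via \eqref{e1111}, reduce the elementary inner products using orthonormality of $\{e_\gamma\}$, and reindex the inner sum via $\tau=\sigma_1\mu^{-1}\nu\sigma_2^{-1}$. The only cosmetic difference is that the paper first rewrites $\langle e_{\alpha\sigma_1\mu^{-1}},e_{\alpha\sigma_2\rho^{-1}}\rangle$ as $\langle e_{\alpha\sigma_1\mu^{-1}\rho\sigma_2^{-1}},e_\alpha\rangle$ using $G$-invariance before invoking orthonormality, whereas you use orthonormality directly; your added remark that $\mu\sigma_1^{-1}\tau\sigma_2$ lands back in $S$ (so $\phi$ is defined there) is a useful clarification the paper leaves implicit.
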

\begin{proof} For $\alpha \in \Gamma^{m}_k$ and $\sigma_1,\sigma_2 \in G$, we have

\begin{eqnarray*}
 \langle e^{\phi}_{\alpha \sigma_1}, e^{\phi}_{\alpha \sigma_2} \rangle &=& \frac{|\phi(e)|^2}{|S|^2}\sum_{\mu \in S} \sum_{\rho \in S} \phi(\mu) \overline{\phi(\rho)}  \langle e_{\alpha\sigma_1  \mu^{-1}}, e_{\alpha\sigma_2  \rho^{-1}}\rangle \\
    &=& \frac{|\phi(e)|^2}{|S|^2}\sum_{\mu \in S} \sum_{\rho \in S} \phi(\mu) \overline{\phi(\rho)}  \langle e_{\alpha \sigma_1 \mu^{-1} \rho \sigma_{2}^{-1}  }, e_{ \alpha } \rangle\\
     &=& \frac{|\phi(e)|^2}{|S|^2}\sum_{\mu \in S} \sum_{\rho \in S, \sigma_1 \mu^{-1}\rho \sigma_{2}^{-1}    \in G_{\alpha}} \phi(\mu) \overline{\phi(\rho)}\\
     &=& \frac{|\phi(e)|^2}{|S|^2}\sum_{\mu \in S} \sum_{  \tau \in \sigma_{1}\mu^{-1} S \sigma^{-1}_2  \cap G_{\alpha}} \phi(\mu) \overline{\phi(\mu\sigma^{-1}_{1}\tau \sigma_{2})},
\end{eqnarray*}
where $\tau= \sigma_1 \mu^{-1} \rho \sigma_{2}^{-1} $.
\end{proof}

The following is immediate.
\begin{cor}
\label{c2}
 Let $\sigma_{1},\sigma_{2} \in G, S\subseteq G$ and $\phi=\psi\mid_{S}$, where $\psi$ is a linear character of $G$. If $G_{\alpha}=\{ e\}$ and  $A=\{ \mu \in S \mid e \in \sigma_{1}\mu^{-1}S\sigma_{2}^{-1}\}\neq \emptyset$, then
$$
\langle e^{\phi}_{\alpha \sigma_1}, e^{\phi}_{\alpha \sigma_2} \rangle \neq 0.
$$
\end{cor}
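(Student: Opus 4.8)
The plan is to feed the hypotheses directly into formula \eqref{innert3} of Theorem \ref{t3} and observe that all surviving summands coincide and are nonzero. Since $G_{\alpha}=\{e\}$, for each fixed $\mu\in S$ the inner index set $\sigma_{1}\mu^{-1}S\sigma_{2}^{-1}\cap G_{\alpha}$ is either empty or equal to $\{e\}$, and it equals $\{e\}$ precisely when $e\in\sigma_{1}\mu^{-1}S\sigma_{2}^{-1}$, i.e.\ exactly when $\mu\in A$. Hence \eqref{innert3} collapses to
\[
\langle e^{\phi}_{\alpha\sigma_1}, e^{\phi}_{\alpha\sigma_2}\rangle
= \frac{|\phi(e)|^{2}}{|S|^{2}}\sum_{\mu\in A}\phi(\mu)\,\overline{\phi(\mu\sigma_{1}^{-1}\sigma_{2})}.
\]
Note the argument $\mu\sigma_{1}^{-1}\sigma_{2}$ genuinely lies in $S$: it is the element $\rho=\mu\sigma_{1}^{-1}\tau\sigma_{2}\in S$ from the proof of Theorem \ref{t3} specialized to $\tau=e$, so $\phi$ is legitimately evaluated there.

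Next I would invoke that $\psi$ is a linear character: $\phi(e)=\psi(e)=1$, the function $\psi$ is multiplicative, and $|\psi(g)|=1$ for every $g\in G$ since $\psi(g)$ is a root of unity. Consequently, for each $\mu\in A$,
\[
\phi(\mu)\,\overline{\phi(\mu\sigma_{1}^{-1}\sigma_{2})}
= \psi(\mu)\,\overline{\psi(\mu)}\;\overline{\psi(\sigma_{1}^{-1}\sigma_{2})}
= |\psi(\mu)|^{2}\,\overline{\psi(\sigma_{1}^{-1}\sigma_{2})}
= \overline{\psi(\sigma_{1}^{-1}\sigma_{2})},
\]
a single unit complex number independent of $\mu$. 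Summing over the $|A|$ elements of $A$ yields
\[
\langle e^{\phi}_{\alpha\sigma_1}, e^{\phi}_{\alpha\sigma_2}\rangle
= \frac{|A|}{|S|^{2}}\,\overline{\psi(\sigma_{1}^{-1}\sigma_{2})},
\]
which is nonzero because $A\neq\emptyset$ forces $|A|\ge 1$ and $\psi(\sigma_{1}^{-1}\sigma_{2})$ is a root of unity, hence never $0$.

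There is essentially no obstacle here; the only point requiring a moment's care is the bookkeeping that reduces the double sum of \eqref{innert3} to a sum over $A$ — specifically, checking that $\tau$ is forced to equal $e$ and that the resulting argument $\mu\sigma_{1}^{-1}\sigma_{2}$ remains inside $S$. The conceptual content is simply that for a linear $\psi$ no phase cancellation can occur among the $|A|$ surviving terms, since each has modulus $1$ and, in fact, each equals the same phase $\overline{\psi(\sigma_{1}^{-1}\sigma_{2})}$.
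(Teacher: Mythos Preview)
Your argument is correct and is exactly the intended one: the paper merely remarks that the corollary is immediate from Theorem~\ref{t3}, and your write-up is precisely the natural unpacking of that remark, reducing the double sum to a single sum over $A$ via $G_\alpha=\{e\}$ and then using multiplicativity and unimodularity of the linear character $\psi$ to see that all terms equal the common nonzero phase $\overline{\psi(\sigma_1^{-1}\sigma_2)}$. The care you take in noting that $\mu\sigma_1^{-1}\sigma_2\in S$ (so that $\phi$ is defined there) is appropriate and matches the substitution in the proof of Theorem~\ref{t3}.
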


In the following sections, we also need the lemma and propositions below.
\begin{lem}\label{l1}
 Assume that $S$ is closed under conjugation by elements of $G$
and that $\phi$ is constant on the conjugacy classes of $G$. For each $\alpha \in \Gamma^{m}_k$ and $\sigma \in G$, we have $\sigma e^{\phi}_\alpha= e^{\phi}_{\alpha \sigma^{-1}}$.
\end{lem}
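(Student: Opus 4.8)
The plan is to expand $\sigma e^{\phi}_\alpha$ straight from the definition \eqref{e1111}, push the group action through the sum using $\sigma e_\gamma = e_{\gamma\sigma^{-1}}$, and then reindex the resulting sum by a conjugation substitution that invokes both hypotheses on $S$ and on $\phi$.

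First I would write, using linearity of the $\mathbb{C}G$-action,
\[
\sigma e^{\phi}_\alpha \;=\; \sigma\Bigl( \frac{\phi(e)}{|S|}\sum_{\tau \in S}\phi(\tau)\, e_{\alpha\tau^{-1}} \Bigr) \;=\; \frac{\phi(e)}{|S|}\sum_{\tau \in S}\phi(\tau)\, \sigma e_{\alpha\tau^{-1}}.
\]
Since the action of $G$ on $\Gamma^m_k$ is a right action, $(\alpha\tau^{-1})\sigma^{-1} = \alpha(\sigma\tau)^{-1}$, so $\sigma e_{\alpha\tau^{-1}} = e_{\alpha(\sigma\tau)^{-1}}$ and
\[
\sigma e^{\phi}_\alpha \;=\; \frac{\phi(e)}{|S|}\sum_{\tau \in S}\phi(\tau)\, e_{\alpha(\sigma\tau)^{-1}}.
\]
Now, because $S$ is closed under conjugation by elements of $G$, the map $\tau \mapsto \sigma\tau\sigma^{-1}$ is a bijection of $S$; setting $\mu = \sigma\tau\sigma^{-1}$, i.e. $\tau = \sigma^{-1}\mu\sigma$, we have $\sigma\tau = \mu\sigma$, hence $\alpha(\sigma\tau)^{-1} = \alpha\sigma^{-1}\mu^{-1} = (\alpha\sigma^{-1})\mu^{-1}$, and since $\tau$ is conjugate to $\mu$ in $G$ and $\phi$ is constant on conjugacy classes of $G$ we get $\phi(\tau) = \phi(\mu)$. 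Reindexing the sum over $\mu \in S$ therefore gives
\[
\sigma e^{\phi}_\alpha \;=\; \frac{\phi(e)}{|S|}\sum_{\mu \in S}\phi(\mu)\, e_{(\alpha\sigma^{-1})\mu^{-1}} \;=\; e^{\phi}_{\alpha\sigma^{-1}},
\]
the last equality being the definition \eqref{e1111} applied to $\alpha\sigma^{-1}$ in place of $\alpha$.

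There is no deep obstacle; the only points demanding care are the bookkeeping of the right-action convention (so that the subscripts combine as $\alpha(\sigma\tau)^{-1}=\alpha\sigma^{-1}\mu^{-1}$ after the substitution) and the verification that the conjugation substitution is legitimate — which is precisely where the assumptions that $S$ is conjugation-closed and that $\phi$ is a class function enter. (Equivalently, one could compare the coefficient of each $e_\beta$ on both sides and use the bijection $\tau \mapsto \sigma^{-1}\tau\sigma$ of the relevant index sets, which is the same computation organized differently.)
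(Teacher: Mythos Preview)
Your argument is correct and is exactly the standard computation: push $\sigma$ through the sum, then reindex by the conjugation bijection $\tau\mapsto\sigma\tau\sigma^{-1}$ of $S$, using that $\phi$ is a class function. The paper itself does not give a proof here but simply cites \cite[Lem.~1.3]{HK}, where the same direct calculation is carried out; your write-up effectively reproduces that proof.
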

\begin{proof} See \cite[Lem. 1.3]{HK}.
\end{proof}
As a consequence of this lemma, we have the following proposition.
\begin{prop}\label{rebasis}  Let $\phi:S\longrightarrow \mathbb{C}$ be a fixed function equipped the assumption of Lemma \ref{l1}.  If $B=\{e^{\phi}_{\alpha g_{1}} , e^{\phi}_{\alpha g_2} ,...,e^{\phi}_{\alpha g_k} \}$ is an o-basis of $V^{\phi}_{\alpha}(G)$, then,  for each $g\in G$, $$g\cdot B=\{e^{\phi}_{\alpha g_{1}g^{-1}}, e^{\phi}_{\alpha  g_{2}g^{-1}},...,e^{\phi}_{\alpha  g_{k}g^{-1}} \}$$ is also an o-basis of $V^{\phi}_{\alpha}(G)$.
\end{prop}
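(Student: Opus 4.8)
The plan is to exploit the fact that, for each fixed $g\in G$, left multiplication by $g$ on $V^{\otimes m}$ is a unitary operator that carries the orbital subspace $V^{\phi}_{\alpha}(G)$ onto itself and sends standard symmetrized tensors to standard symmetrized tensors. So define $T_g\colon V^{\otimes m}\to V^{\otimes m}$ by $T_g(x)=gx$. This map is $\mathbb{C}$-linear, and since the induced inner product on $V^{\otimes m}$ is $G$-invariant we have $\langle T_g x,T_g y\rangle=\langle x,y\rangle$ for all $x,y\in V^{\otimes m}$; in particular $T_g$ is injective, preserves orthogonality, and sends nonzero vectors to nonzero vectors.

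Next I would use Lemma~\ref{l1}, whose hypotheses (that $S$ is closed under conjugation by $G$ and that $\phi$ is constant on the conjugacy classes of $G$) are exactly those imposed on $\phi$ in the statement. Lemma~\ref{l1} gives $g\cdot e^{\phi}_{\beta}=e^{\phi}_{\beta g^{-1}}$ for every $\beta\in\Gamma^{m}_{k}$. Taking $\beta=\alpha g_i$ and using the associativity of the $G$-action on $\Gamma^{m}_{k}$ (so that $(\alpha g_i)g^{-1}=\alpha (g_i g^{-1})$) yields $T_g(e^{\phi}_{\alpha g_i})=e^{\phi}_{\alpha g_i g^{-1}}$. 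Hence $g\cdot B=T_g(B)$ is precisely the set $\{e^{\phi}_{\alpha g_1 g^{-1}},\dots,e^{\phi}_{\alpha g_k g^{-1}}\}$ displayed in the proposition, and each of its members is a standard symmetrized tensor, so $g\cdot B$ has the required form. Applying the same identity to a general spanning vector $e^{\phi}_{\alpha\sigma}$ of $V^{\phi}_{\alpha}(G)$ gives $T_g(e^{\phi}_{\alpha\sigma})=e^{\phi}_{\alpha\sigma g^{-1}}$, and since $\sigma\mapsto\sigma g^{-1}$ is a bijection of $G$, the spanning set $\{e^{\phi}_{\alpha\sigma}:\sigma\in G\}$ is merely permuted; therefore $T_g\big(V^{\phi}_{\alpha}(G)\big)=V^{\phi}_{\alpha}(G)$, i.e. $T_g$ restricts to a linear automorphism of $V^{\phi}_{\alpha}(G)$.

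Combining these observations finishes the argument: a linear isomorphism of $V^{\phi}_{\alpha}(G)$ onto itself sends a basis to a basis, so $T_g(B)$ is a basis of $V^{\phi}_{\alpha}(G)$; it is orthogonal because $T_g$ preserves the inner product; and by the computation above it is of the form $\{e^{\phi}_{\alpha g_1 g^{-1}},\dots,e^{\phi}_{\alpha g_k g^{-1}}\}$. Thus $g\cdot B$ is an o-basis of $V^{\phi}_{\alpha}(G)$. There is no substantial obstacle here; the only points requiring care are checking that the hypotheses of Lemma~\ref{l1} are literally those assumed in the proposition, and confirming that $g\cdot B$ still has cardinality $k$ and consists of nonzero vectors (both immediate from injectivity of $T_g$) so that it is a genuine basis rather than an orthogonal family of the wrong size.
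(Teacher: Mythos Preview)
Your proof is correct and follows essentially the same approach as the paper, which simply notes that the proposition is an immediate consequence of Lemma~\ref{l1}. You have merely spelled out the details that the paper leaves implicit: that left multiplication by $g$ is a unitary automorphism of $V^{\phi}_{\alpha}(G)$ which, by Lemma~\ref{l1}, permutes the standard symmetrized tensors.
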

\begin{proof} This is an immediate result of Lemma \ref{l1}.
\end{proof}

\begin{prop}\label{separate} Let $\phi:S\longrightarrow \mathbb{C}$ be a fixed function.  Also, let $C$ contained in $S$ be a subgroup of $G$.   If $G_{\gamma}=\{ e\}$ and $\phi(s)=0$ for all $s\in S-C$, then $$ V^{\phi}_{\gamma}(G)=\langle e^{\phi}_{\gamma g} \mid g \in C \rangle\oplus \langle e^{\phi}_{\gamma g} \mid g \in G- C \rangle.  $$
\end{prop}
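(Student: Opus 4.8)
The plan is to reduce everything to a single orthogonality statement. The sum of the two subspaces on the right is visibly all of $V^{\phi}_{\gamma}(G)=\langle e^{\phi}_{\gamma g}\mid g\in G\rangle$, since $G$ is the disjoint union of $C$ and $G-C$. Moreover, in an inner product space two orthogonal subspaces meet only in $0$ (a vector orthogonal to itself is zero), so once I show the two spanning sets are mutually orthogonal, the sum is automatically direct — indeed an \emph{orthogonal} direct sum, which is presumably the intended reading of $\oplus$. Thus the entire content is: $\langle e^{\phi}_{\gamma g},e^{\phi}_{\gamma h}\rangle=0$ for all $g\in C$ and all $h\in G-C$.

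To prove this I would apply Theorem~\ref{t3} with $\alpha=\gamma$, $\sigma_{1}=g$, $\sigma_{2}=h$. Since $G_{\gamma}=\{e\}$, the inner sum over $\tau\in g\mu^{-1}Sh^{-1}\cap G_{\gamma}$ collapses: it is nonempty exactly when $e\in g\mu^{-1}Sh^{-1}$, i.e. when $\mu g^{-1}h\in S$, and in that case $\tau=e$ and the summand becomes $\phi(\mu)\,\overline{\phi(\mu g^{-1}h)}$. Hence
$$
\langle e^{\phi}_{\gamma g},e^{\phi}_{\gamma h}\rangle
=\frac{|\phi(e)|^{2}}{|S|^{2}}\sum_{\substack{\mu\in S\\ \mu g^{-1}h\in S}}\phi(\mu)\,\overline{\phi(\mu g^{-1}h)}.
$$

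The key step is a short group-theoretic observation showing every summand vanishes. I claim that for each $\mu$ occurring in the sum, at least one of $\mu$, $\mu g^{-1}h$ lies in $S-C$. Indeed, if both $\mu\in C$ and $\mu g^{-1}h\in C$, then, using that $C$ is a subgroup containing $g$, we get $g^{-1}h=\mu^{-1}(\mu g^{-1}h)\in C$ and therefore $h=g(g^{-1}h)\in C$, contradicting $h\in G-C$. Since $\phi$ vanishes on $S-C$ by hypothesis, each term $\phi(\mu)\,\overline{\phi(\mu g^{-1}h)}$ is zero, so the inner product is zero.

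Assembling the pieces: mutual orthogonality of $\{e^{\phi}_{\gamma g}\mid g\in C\}$ and $\{e^{\phi}_{\gamma g}\mid g\in G-C\}$ yields the direct (orthogonal) sum, and the disjoint-union remark yields that the sum is all of $V^{\phi}_{\gamma}(G)$. I do not expect a real obstacle; the only points requiring mild care are reading off the collapsed form of Theorem~\ref{t3} when $G_{\gamma}=\{e\}$ (keeping the substitution $\tau=\sigma_{1}\mu^{-1}\rho\sigma_{2}^{-1}$ straight, so that the argument of $\phi$ in the conjugate factor is genuinely $\mu g^{-1}h$), and observing that it is precisely the hypothesis $C\subseteq S$ that makes ``$\phi$ vanishes on $S-C$'' strong enough to annihilate the cross terms.
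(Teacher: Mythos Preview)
Your proof is correct and follows essentially the same approach as the paper: apply Theorem~\ref{t3} with $G_{\gamma}=\{e\}$ to reduce the inner product to a sum of terms $\phi(\mu)\,\overline{\phi(\mu g^{-1}h)}$, and observe that a nonzero term would force $\mu\in C$ and $\mu g^{-1}h\in C$, whence $h\in C$ since $C$ is a subgroup containing $g$. Your write-up is simply more explicit about the collapse of the inner sum and the directness of the resulting orthogonal sum.
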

\begin{proof}  If we choose $\sigma_1 \in C$ and $\sigma_2 \in G\setminus C$ in (\ref{innert3}), we get non-zero term only if $\mu \in C$ and $\mu\sigma_1^{-1}\sigma_2\in C$, which is impossible, since $C$ is a group.  Thus, the two spaces are orthogonal.
\end{proof}

\begin{prop}\label{prop111} Let $S$ be a subgroup of $G$ and $\phi:S\longrightarrow \mathbb{C}$ be a non zero constant function on $S$.  Then, for each $\alpha \in \Gamma^{m}_{\dim V}$, $$V^{\phi}_{\alpha}(G)=\langle e^{\phi}_{\alpha \sigma} | \sigma \in G \rangle$$ has an o-basis and so does $V_{\phi}(G)$.
\end{prop}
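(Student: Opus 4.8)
The plan is to exploit the fact that a nonzero constant function on a subgroup turns the symmetrizer into a scalar multiple of the group-averaging idempotent of that subgroup, so that each symmetrized tensor is simply a normalized sum, over an $S$-orbit, of the orthonormal basis vectors $e_\gamma$. Write $\phi(\sigma)=c$ for all $\sigma\in S$, with $c\neq0$; then $\phi(e)=c$ and by \eqref{e1}
$$
s_{\phi}=\frac{c}{|S|}\sum_{\sigma\in S}c\,\sigma=\frac{c^{2}}{|S|}\sum_{\sigma\in S}\sigma=c^{2}\,\varepsilon_{S},\qquad \varepsilon_{S}:=\frac{1}{|S|}\sum_{\sigma\in S}\sigma,
$$
where $\varepsilon_{S}$ is the familiar idempotent of the subgroup $S$. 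Hence, by \eqref{e1111}, for every $\beta\in\Gamma^{m}_{\dim V}$ we have $e^{\phi}_{\beta}=c^{2}\varepsilon_{S}e_{\beta}=\frac{c^{2}}{|S|}\sum_{\sigma\in S}e_{\beta\sigma^{-1}}$.

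First I would rewrite this as a sum over an orbit. Since the $G$-action on $\Gamma^{m}_{\dim V}$ is a right action and $S$ is a subgroup, $\{\beta\sigma^{-1}\mid\sigma\in S\}=\beta S$ is precisely the $S$-orbit of $\beta$, and for each $\gamma\in\beta S$ the number of $\sigma\in S$ with $\beta\sigma^{-1}=\gamma$ equals $|S\cap G_{\beta}|$, independently of $\gamma$; by orbit--stabilizer this common multiplicity is $|S|/|\beta S|$, so
$$
e^{\phi}_{\beta}=\frac{c^{2}}{|\beta S|}\sum_{\gamma\in\beta S}e_{\gamma}.
$$
In particular $e^{\phi}_{\beta}$ is a nonzero vector that depends only on the orbit $\beta S$, and in the orthonormal basis $\{e_{\gamma}\}$ it is supported exactly on $\beta S$. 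Note that no hypothesis on $G_{\beta}$ is used here; the structure comes entirely from $\phi$ being constant on the subgroup $S$.

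Next I would assemble an o-basis of an orbital subspace. Applying the previous step with $\beta=\alpha\sigma$ for $\sigma\in G$, two generators $e^{\phi}_{\alpha\sigma_{1}}$ and $e^{\phi}_{\alpha\sigma_{2}}$ of $V^{\phi}_{\alpha}(G)$ are equal when $\alpha\sigma_{1}S=\alpha\sigma_{2}S$, and are supported on disjoint subsets of $\{e_{\gamma}\}$, hence orthogonal, when $\alpha\sigma_{1}S\neq\alpha\sigma_{2}S$, because distinct $S$-orbits are disjoint. Thus, choosing $\sigma_{1},\dots,\sigma_{t}\in G$ so that $\alpha\sigma_{1}S,\dots,\alpha\sigma_{t}S$ are exactly the distinct $S$-orbits contained in the $G$-orbit $\alpha G$, the set $\{e^{\phi}_{\alpha\sigma_{1}},\dots,e^{\phi}_{\alpha\sigma_{t}}\}$ consists of nonzero, pairwise orthogonal vectors, so it is linearly independent, and it spans $V^{\phi}_{\alpha}(G)$ since every generator $e^{\phi}_{\alpha\sigma}$ coincides with one of them. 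Therefore it is an o-basis of $V^{\phi}_{\alpha}(G)$.

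Finally, I would pass to $V_{\phi}(G)$ via Theorem~\ref{thm111}: the decomposition $V_{\phi}(G)=\sum_{\alpha\in\Delta}V^{\phi}_{\alpha}(G)$ is orthogonal, so the union over $\alpha\in\Delta$ of the o-bases constructed above is again an orthogonal set of nonzero symmetrized tensors spanning $V_{\phi}(G)$, that is, an o-basis of $V_{\phi}(G)$. The only point requiring real care is the orbit--stabilizer bookkeeping in the second paragraph, together with keeping track that the $G$-action is on the right so that $\beta S$ genuinely is an orbit; everything else is routine.
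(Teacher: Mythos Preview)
Your proof is correct. The paper reaches the same dichotomy---that any two generators $e^{\phi}_{\alpha\sigma}$ and $e^{\phi}_{\alpha\tau}$ are either equal or orthogonal---but by a different route: it applies the inner product formula of Theorem~\ref{t3} to obtain $\langle e^{\phi}_{\alpha \sigma}, e^{\phi}_{\alpha \tau} \rangle = \frac{|c|^{4}}{|S|}\,|\sigma S\tau^{-1}\cap G_{\alpha}|$, and then argues that nonemptiness of $\sigma S\tau^{-1}\cap G_{\alpha}$ forces $\{\alpha\sigma b \mid b\in S\}=\{\alpha\tau b \mid b\in S\}$ and hence $e^{\phi}_{\alpha\sigma}=e^{\phi}_{\alpha\tau}$. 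Your argument bypasses Theorem~\ref{t3} entirely by recognising $s_{\phi}$ as $c^{2}\varepsilon_{S}$ and reading off that $e^{\phi}_{\beta}$ is the normalised sum of $e_{\gamma}$ over the $S$-orbit $\beta S$; orthogonality then comes for free from disjointness of supports in the orthonormal basis $\{e_{\gamma}\}$. Your route is a bit more elementary and makes the orbit structure explicit, while the paper's approach keeps everything phrased through the inner product formula it has just established; both yield the same ``equal or orthogonal'' conclusion and then invoke Theorem~\ref{thm111} identically.
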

\begin{proof}  Suppose $\phi(s)=c \in \mathbb{C}$ for all $s\in S$.   Since $S$ is a group and by Theorem \ref{t3}, we have that, for $\sigma, \tau \in G$,

 $$
  \langle e^{\phi}_{\alpha \sigma}, e^{\phi}_{\alpha \tau} \rangle =\frac{|c|^2}{|S|^2}\sum_{\mu \in S} \sum_{  \delta \in \sigma\mu^{-1} S \tau^{-1}\cap G_{\alpha}} |c|^2=\frac{|c|^4}{|S|^2}\sum_{\mu \in S} |\sigma S \tau^{-1} \cap G_{\alpha}|=\frac{|c|^4|\sigma S \tau^{-1}\cap G_{\alpha}|}{|S|}.
  $$
   We have $G_{\alpha}\cap \sigma S \tau^{-1}= \emptyset$ or $G_{\alpha}\cap \sigma S \tau^{-1}\neq \emptyset$, for each $\sigma, \tau \in G$.  For the latter case, we have $\sigma\mu \tau^{-1}\in G_{\alpha}$ for some $\mu \in S$.  Thus, for each $b \in S$,
  $$ \alpha\sigma b=\alpha(\sigma\mu \tau^{-1})(\tau \mu^{-1}b)=\alpha \tau g, \hbox{ for some $g=\mu^{-1}b \in S$.} $$
  Hence, $\{ \alpha \sigma b | b \in S\} = \{ \alpha\tau b| b \in S\}$ which means that $$e_{\alpha\sigma}^{\phi}=\frac{c^2}{|S|}\sum_{s\in S}e_{\alpha s}=e_{\alpha \tau}^{\phi}$$ since $S$ is a group and $\phi(s)=c$ for all $s \in S$.
  This implies that, for  $\sigma, \tau \in G$, $e_{\alpha\sigma}^{\phi}=e_{\alpha \tau}^{\phi}$ or $\langle e^{\phi}_{\alpha \sigma}, e^{\phi}_{\alpha \tau } \rangle =0$, which yields that $V^{\phi}_{\alpha}(G)$ has an o-basis and by Theorem \ref{thm111}, we complete the proof.

\end{proof}
\section{Dimension formula}
In this section, we let $G$ be a finite group having $C\subseteq S$ as a group support on $\phi$ ; i.e., $\phi(S\setminus C)=0$ and $\phi(\sigma)\neq 0$ for each $\sigma \in C \leq G$.  Thus, the induced inner product (\ref{innert3}) becomes
\begin{equation}\label{inner C}
     \langle e^{\phi}_{\alpha \sigma_1}, e^{\phi}_{\alpha \sigma_2} \rangle = \frac{|\phi(e)|^2}{|S|^2}\sum_{\mu \in C} \sum_{  \tau \in  \sigma_{1}C \sigma^{-1}_2  \cap G_{\alpha}} \phi(\mu) \overline{\phi(\mu\sigma^{-1}_{1} \tau\sigma_{2})},
\end{equation}
for every $\alpha \in \Gamma^{m}_k$ and $\sigma_1, \sigma_2\in G$.  If $ \sigma_{1}C \sigma^{-1}_2  \cap G_{\alpha}=\emptyset$, then $  \langle e^{\phi}_{\alpha \sigma_1}, e^{\phi}_{\alpha \sigma_2} \rangle =0$.  This motivates us to define a relation on $G$: for each $\alpha \in \triangle$,
\begin{equation}\label{relation1}
    \sigma_1 \sim_\alpha \sigma_2 \Longleftrightarrow \sigma_1 \in G_\alpha\sigma_2C,
\end{equation}
for all $\sigma_1, \sigma_2 \in G$.  It is not hard to check that $\sim_\alpha$,  for each $\alpha \in \triangle$, is an equivalent relation.

 Now, we set $[\sigma]$ as the equivalent class containing $\sigma$, $R^G_\alpha$ the set of representative of $G/\sim_\alpha$ and $V_\alpha^\phi([\sigma]):=\langle e^{\phi}_{\alpha g} | g \in [\sigma] \rangle$.  It is clear that $V_\alpha^\phi([\sigma])$ is a subspace of $V_\alpha^\phi(G)$.
\begin{lem}\label{separate p1} The space $V_\alpha^\phi(G)$ has an o-basis if and only if  for each $\sigma\in R^G_\alpha$, $V_\alpha^\phi([\sigma])$ has an o-basis.
\end{lem}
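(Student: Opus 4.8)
The plan is to realize $V_\alpha^\phi(G)$ as an orthogonal direct sum of the pieces $V_\alpha^\phi([\sigma])$, $\sigma\in R^G_\alpha$, and then transfer the existence of an o-basis back and forth across this decomposition. First I would record that the relation (\ref{relation1}) is designed precisely to detect orthogonality: unwinding the definition and using that $C$ is a subgroup (so that $\{c^{-1}:c\in C\}=C$), one gets $\sigma_1\sim_\alpha\sigma_2\iff \sigma_1C\sigma_2^{-1}\cap G_\alpha\neq\emptyset$. Hence, if $\sigma_1$ and $\sigma_2$ lie in distinct $\sim_\alpha$-classes, then $\sigma_1C\sigma_2^{-1}\cap G_\alpha=\emptyset$ and, by (\ref{inner C}), $\langle e^\phi_{\alpha\sigma_1},e^\phi_{\alpha\sigma_2}\rangle=0$. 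Since $V_\alpha^\phi([\sigma])$ is spanned by the tensors $e^\phi_{\alpha g}$ with $g\in[\sigma]$, this makes $V_\alpha^\phi([\sigma])$ and $V_\alpha^\phi([\tau])$ orthogonal whenever $[\sigma]\neq[\tau]$; and since $G$ is the disjoint union of the $\sim_\alpha$-classes, these subspaces together span $V_\alpha^\phi(G)$. Thus $V_\alpha^\phi(G)=\bigoplus_{\sigma\in R^G_\alpha}V_\alpha^\phi([\sigma])$, an orthogonal direct sum.

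Granting this, the ``if'' direction is bookkeeping. If each $V_\alpha^\phi([\sigma])$ has an o-basis $B_\sigma$, then $\bigcup_\sigma B_\sigma$ is a union of sets of nonzero standard symmetrized tensors which are pairwise orthogonal --- within a single block by hypothesis, and across blocks because the blocks are mutually orthogonal (this also forces the $B_\sigma$ to be pairwise disjoint, since distinct blocks meet in $\{0\}$) --- and it spans $\sum_\sigma V_\alpha^\phi([\sigma])=V_\alpha^\phi(G)$; an orthogonal spanning set of nonzero vectors is a basis, so this is an o-basis of $V_\alpha^\phi(G)$.

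For the ``only if'' direction, suppose $B=\{e^\phi_{\beta_1},\dots,e^\phi_{\beta_t}\}$ is an o-basis of $V_\alpha^\phi(G)$. The essential point is that each $e^\phi_{\beta_i}$ already lies inside one of the summands $V_\alpha^\phi([\sigma])$, and for this I would first prove the auxiliary statement: if $e^\phi_\beta$ is a nonzero vector of $V_\alpha^\phi(G)$, then $\beta$ lies in the $G$-orbit of $\alpha$. Indeed, let $\alpha^{*},\beta^{*}\in\Delta$ be the orbit representatives of $\alpha$ and $\beta$; then $V_\alpha^\phi(G)=V_{\alpha^{*}}^\phi(G)$ (the two spaces are spanned by the same family of tensors) and $e^\phi_\beta\in V_{\beta^{*}}^\phi(G)$, so if $\beta^{*}\neq\alpha^{*}$ then, by the orthogonality in Theorem \ref{thm111}, $e^\phi_\beta\perp V_\alpha^\phi(G)\ni e^\phi_\beta$, whence $e^\phi_\beta=0$, a contradiction. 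Therefore $\beta_i=\alpha g_i$ for some $g_i\in G$; taking $\sigma_i\in R^G_\alpha$ with $g_i\in[\sigma_i]$ gives $e^\phi_{\beta_i}\in V_\alpha^\phi([\sigma_i])$, and the class is unique because distinct summands meet trivially. Partition $B$ accordingly as $B=\bigcup_{\sigma}B_\sigma$ with $B_\sigma=B\cap V_\alpha^\phi([\sigma])$. Each $B_\sigma$ is orthogonal, and it spans $V_\alpha^\phi([\sigma])$: expanding any $v\in V_\alpha^\phi([\sigma])\subseteq V_\alpha^\phi(G)$ in the basis $B$ and comparing components along the orthogonal decomposition $\bigoplus_\tau V_\alpha^\phi([\tau])$, only the terms coming from $B_\sigma$ survive. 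Hence $B_\sigma$ is an o-basis of $V_\alpha^\phi([\sigma])$ for every $\sigma$, completing the equivalence.

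The main obstacle is the auxiliary orbit statement in the ``only if'' direction: the definition of an o-basis permits its members to be arbitrary standard symmetrized tensors $e^\phi_\beta$, so one has to exclude the possibility that a basis vector corresponds to a $\beta$ outside the orbit of $\alpha$ --- equivalently, that it is ``smeared'' across several blocks. This is exactly where the non-degeneracy of the induced inner product together with the orthogonal decomposition of Theorem \ref{thm111} is used; everything else is routine manipulation of orthogonal direct sums.
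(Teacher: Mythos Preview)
Your proof is correct and follows the same strategy as the paper: establish the orthogonal direct sum $V_\alpha^\phi(G)=\bigoplus_{\sigma\in R^G_\alpha}V_\alpha^\phi([\sigma])$ via (\ref{inner C}), then transfer o-bases across it. Your treatment of the ``only if'' direction is in fact more careful than the paper's (which simply declares it ``clear''), since you explicitly handle the subtlety that an o-basis element $e^\phi_\beta$ need not, a priori, have $\beta$ in the $G$-orbit of $\alpha$.
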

\begin{proof} Suppose that  $V_\alpha^\phi([\sigma])$ has an o-basis, for each $\sigma\in R^G_\alpha$.  To show that  the space $V_\alpha^\phi(G)$ has an o-basis, it suffices to prove that $V_\alpha^\phi([\sigma])$'s are orthogonal.  Now, let $\sigma_1, \sigma_2 \in G$ and $\alpha \in \Delta$.  If $ \sigma_{1}C \sigma^{-1}_2  \cap G_{\alpha}\neq\emptyset$, then $\sigma_1 \in G_\alpha\sigma_2C$.  Hence, if $\sigma_1\nsim_\alpha \sigma_2$, then $ \sigma_{1}C \sigma^{-1}_2  \cap G_{\alpha}=\emptyset$.  In other words, if $[\sigma_1]\neq [\sigma_2]$, then $\langle e^\phi_{\alpha\sigma_1},e^\phi_{\alpha\sigma_2}\rangle=0$.  The other implication is clear.
\end{proof}

For the following propositions,  denote $\langle e^{\phi}_{{\color{red}\gamma} g} | g \in C \rangle$ by $V^\phi_\gamma(C)$.
\begin{prop}\label{separated lemma} The space $V_\phi(G)$ has an o-basis if and only if for each $\gamma \in \Delta$, $V^\phi_\gamma(C)$ has an o-basis.
\end{prop}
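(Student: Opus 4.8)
The plan is to chain together the two reductions already available — the \emph{orthogonal} orbital decomposition of Theorem~\ref{thm111} and the double-coset decomposition of Lemma~\ref{separate p1} — and then to recognise each piece $V^\phi_\alpha([\sigma])$ as a space of the type $V^\phi_\gamma(C)$. First I would record the elementary fact that, because $V_\phi(G)=\sum_{\alpha\in\Delta}V^\phi_\alpha(G)$ is an orthogonal sum, $V_\phi(G)$ has an o-basis if and only if every $V^\phi_\alpha(G)$ with $\alpha\in\Delta$ does: ``if'' is just taking the union of o-bases, and for ``only if'' one partitions a given o-basis $\{e^\phi_{\beta_1},\dots,e^\phi_{\beta_N}\}$ of $V_\phi(G)$ according to the $G$-orbit of each $\beta_j$, notes that the blocks land in the pairwise orthogonal summands $V^\phi_\alpha(G)$, and concludes by a dimension count that each block is an o-basis of its summand.

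Next, Lemma~\ref{separate p1} reduces the o-basis problem for $V^\phi_\alpha(G)$ to the same problem for each $V^\phi_\alpha([\sigma])$, $\sigma\in R^G_\alpha$. The computation at the heart of the proposition is then: for $g\in[\sigma]=G_\alpha\sigma C$, write $g=h\sigma c$ with $h\in G_\alpha$, $c\in C$; since $\alpha h=\alpha$ we get $\alpha g=\alpha\sigma c$, hence $e^\phi_{\alpha g}=e^\phi_{(\alpha\sigma)c}$, so that
\[
V^\phi_\alpha([\sigma])=\langle e^\phi_{(\alpha\sigma)c}\mid c\in C\rangle=V^\phi_{\alpha\sigma}(C).
\]
Taking $\sigma=e$ gives $V^\phi_\alpha([e])=V^\phi_\alpha(C)$, which already settles ``only if'': an o-basis of $V_\phi(G)$ yields one of every $V^\phi_\alpha(G)$, hence, by Lemma~\ref{separate p1}, one of every $V^\phi_\alpha([e])=V^\phi_\alpha(C)$ with $\alpha\in\Delta$.

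For the converse I have to pass from ``$V^\phi_\gamma(C)$ has an o-basis for each $\gamma\in\Delta$'' to ``$V^\phi_{\alpha\sigma}(C)$ has an o-basis for every $\alpha\in\Delta$ and every $\sigma\in R^G_\alpha$''. Writing $\alpha\sigma=\gamma t$ with $\gamma\in\Delta$ and $t\in G$, the task becomes: an o-basis of $V^\phi_\gamma(C)$ yields an o-basis of $V^\phi_{\gamma t}(C)$. I would prove this by transporting along the $G$-action. Here I use that $\phi$ is a class function on $G$: combined with the hypothesis that $\phi$ is non-vanishing exactly on $C$ this forces $C\trianglelefteq G$ (if $c\in C$ then $\phi(tct^{-1})=\phi(c)\ne0$, so $tct^{-1}\in C$), and it makes Lemma~\ref{l1} applicable, so each $t\in G$ acts unitarily on $V^{\otimes m}$ with $t\cdot e^\phi_\beta=e^\phi_{\beta t^{-1}}$. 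Then $x\mapsto t^{-1}x$ carries $e^\phi_{\gamma c}$ to $e^\phi_{\gamma ct}=e^\phi_{(\gamma t)(t^{-1}ct)}$; since $t^{-1}ct$ runs over $C$ as $c$ does, this unitary map sends an o-basis $\{e^\phi_{\gamma c_i}\}$ of $V^\phi_\gamma(C)$ onto an orthogonal family of standard symmetrized tensors spanning $V^\phi_{\gamma t}(C)$, that is, an o-basis of it. Feeding this back through Lemma~\ref{separate p1} and Theorem~\ref{thm111} produces an o-basis of $V_\phi(G)$.

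I expect the transport step in the converse to be the only genuine obstacle. The identification $V^\phi_\alpha([\sigma])=V^\phi_{\alpha\sigma}(C)$ is pure bookkeeping, but matching the single chosen representative $\gamma\in\Delta$ against all the $C$-orbits sitting inside its $G$-orbit really uses the class-function property of $\phi$ (equivalently, normality of $C$); one should make sure this is in force — it is for the non-linear Brauer characters and the cyclic index-two subgroups treated in the applications.
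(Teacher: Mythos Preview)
Your argument follows the paper's exactly through the key identification $V^\phi_\alpha([\sigma])=V^\phi_{\alpha\sigma}(C)$ and the appeal to Lemma~\ref{separate p1} and Theorem~\ref{thm111}. Where you go beyond the paper is in the ``if'' direction: the paper simply sets $\gamma=\alpha\sigma$ and stops, but such $\gamma$ need not lie in the chosen transversal $\Delta$, so the hypothesis ``$V^\phi_\gamma(C)$ has an o-basis for each $\gamma\in\Delta$'' does not literally hand you an o-basis of every $V^\phi_{\alpha\sigma}(C)$. Your transport step --- observing that the class-function property of $\phi$ together with $\phi^{-1}(\mathbb{C}^\times)=C$ forces $C\trianglelefteq G$, and then using Lemma~\ref{l1} to carry an o-basis of $V^\phi_\gamma(C)$ unitarily onto one of $V^\phi_{\gamma t}(C)$ --- is the natural way to close this gap, and the extra hypothesis (that $\phi$ is constant on conjugacy classes) holds in all the Brauer-character applications the paper treats.
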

\begin{proof} For each $[\sigma ]\in G/\sim_\alpha$, we have that
$$\begin{array}{ccl}
  V_\alpha^\phi([\sigma]) & = & \langle e^{\phi}_{\alpha g} | g \in [\sigma] \rangle \\
  & = & \langle e^{\phi}_{\alpha g} | g \in G_\alpha\sigma C \rangle \\
   & = & \langle e^{\phi}_{\alpha \sigma h} | h \in C \rangle \\
   & = & \langle e^{\phi}_{\gamma h} | h \in C \rangle; \gamma=\alpha\sigma \\
   & = & V^\phi_\gamma(C).
\end{array}$$
By Lemma \ref{separate p1} and (\ref{sum}), we finish the proof.
\end{proof}
To determine the dimension of $V^\phi_\gamma(C)$, for each $\gamma \in \Delta$, we introduce a relation $\sim^\ast_\gamma$ on $C$ by; for each $\sigma_1,\sigma_2 \in C$,
\begin{equation}\label{rela2}
    \sigma_1\sim^\ast_\gamma \sigma_2 \Longleftrightarrow \sigma_1\sigma_2^{-1}\in G_\gamma.
\end{equation}
It is obvious that $\sim^\ast_\gamma$ is an equivalent relation.  Now, we have;
\begin{prop}\label{prop dim} If $C/\sim^\ast_\gamma=\{ [\sigma_1], [\sigma_2],...,[\sigma_{t_\gamma}]\}$, then $\dim(V^\phi_\gamma(C))=\operatorname{rank} (M_{\gamma}),$
where $(M_\gamma)_{ij}:=\sum_{h\in C\cap G_\gamma}\phi(h\sigma_i\sigma_j)$ and $1\leq i,j\leq t_\gamma$.
\end{prop}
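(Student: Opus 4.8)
The plan is to realise $V^\phi_\gamma(C)$ as the span of the finitely many symmetrized tensors $e^\phi_{\gamma\sigma_i}$ ($i=1,\dots,t_\gamma$), to expand each of them in a linearly independent family of standard basis vectors, and then to read off the dimension as the rank of the resulting transition matrix. First I would collapse the spanning set: if $\sigma_1\sim^\ast_\gamma\sigma_2$ then by (\ref{rela2}) we have $\sigma_1\sigma_2^{-1}\in G_\gamma$, hence $\gamma\sigma_1=\gamma\sigma_2$ in $\Gamma^m_k$, hence $e_{\gamma\sigma_1}=e_{\gamma\sigma_2}$ and therefore $e^\phi_{\gamma\sigma_1}=s_\phi e_{\gamma\sigma_1}=s_\phi e_{\gamma\sigma_2}=e^\phi_{\gamma\sigma_2}$. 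Consequently $\{e^\phi_{\gamma g}\mid g\in C\}=\{e^\phi_{\gamma\sigma_1},\dots,e^\phi_{\gamma\sigma_{t_\gamma}}\}$, so $V^\phi_\gamma(C)=\langle e^\phi_{\gamma\sigma_1},\dots,e^\phi_{\gamma\sigma_{t_\gamma}}\rangle$. The same computation, read backwards, shows that for $i\neq j$ we have $\gamma\sigma_i\neq\gamma\sigma_j$, so $e_{\gamma\sigma_1},\dots,e_{\gamma\sigma_{t_\gamma}}$ are pairwise distinct members of the orthonormal basis $\{e_\beta\mid\beta\in\Gamma^m_k\}$ of $V^{\otimes m}$ and in particular are linearly independent.

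Next I would expand each generator in this independent family. Starting from (\ref{e1111}) and using $\phi(S\setminus C)=0$ together with the fact that $C$ is a subgroup containing each $\sigma_j$, the defining sum of $e^\phi_{\gamma\sigma_j}$ runs over $C$; after the substitution $c=\sigma_j\nu^{-1}$ (so $\nu=c^{-1}\sigma_j$ and $c$ runs over $C$) it becomes $\frac{\phi(e)}{|S|}\sum_{c\in C}\phi(c^{-1}\sigma_j)\,e_{\gamma c}$. Now partition $C$ into the classes of $\sim^\ast_\gamma$, which are exactly the cosets $(C\cap G_\gamma)\sigma_k$; on each such class $e_{\gamma c}$ equals $e_{\gamma\sigma_k}$, and after reindexing the inner summation variable by $h\mapsto h^{-1}$ one obtains
$$
e^\phi_{\gamma\sigma_j}=\frac{\phi(e)}{|S|}\sum_{k=1}^{t_\gamma}\Bigl(\sum_{h\in C\cap G_\gamma}\phi(\sigma_k^{-1}h\sigma_j)\Bigr)e_{\gamma\sigma_k}.
$$
Since $\phi(e)/|S|\neq0$ and the $e_{\gamma\sigma_k}$ are linearly independent, $\dim V^\phi_\gamma(C)$ equals the rank of the $t_\gamma\times t_\gamma$ matrix $N$ with $N_{kj}=\sum_{h\in C\cap G_\gamma}\phi(\sigma_k^{-1}h\sigma_j)$.

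It then remains to identify $\operatorname{rank}N$ with $\operatorname{rank}M_\gamma$. The assignment $[\sigma]\mapsto[\sigma^{-1}]$ is a well-defined involution of $C/\sim^\ast_\gamma$ (well-definedness being immediate since $C$ is abelian, as it is in all the situations treated in this paper, $C$ being cyclic), and it induces a permutation $\pi$ of $\{1,\dots,t_\gamma\}$ characterised by $\sigma_k^{-1}(C\cap G_\gamma)=(C\cap G_\gamma)\sigma_{\pi(k)}$. Hence $\{\sigma_k^{-1}h\mid h\in C\cap G_\gamma\}=\{h\sigma_{\pi(k)}\mid h\in C\cap G_\gamma\}$, so
$N_{kj}=\sum_{h}\phi(\sigma_k^{-1}h\sigma_j)=\sum_{h}\phi(h\sigma_{\pi(k)}\sigma_j)=(M_\gamma)_{\pi(k)\,j}$;
that is, $N$ is obtained from $M_\gamma$ by a permutation of rows, whence $\operatorname{rank}N=\operatorname{rank}M_\gamma$, completing the proof.

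I expect the main obstacle to be this last identification: the symmetrizer computation naturally produces the matrix $N$ written with $\sigma_k^{-1}$, and matching it to the stated $M_\gamma$ (written with $\sigma_k$) requires controlling how inversion acts on the coset space $C/\sim^\ast_\gamma$, which is transparent for abelian (in particular cyclic) $C$ but needs extra care otherwise. A more computational alternative avoids the change of basis altogether: evaluate the Gram matrix $\bigl(\langle e^\phi_{\gamma\sigma_i},e^\phi_{\gamma\sigma_j}\rangle\bigr)_{i,j}$ directly from Theorem \ref{t3} and (\ref{inner C}), observe that it equals a nonzero scalar multiple of the transpose of $N^\ast N$, and use $\operatorname{rank}(N^\ast N)=\operatorname{rank}N$ together with the fact that the dimension of a subspace equals the rank of the Gram matrix of any spanning set; this leads to the same conclusion but through a bulkier calculation and still reaches the same matrix‑matching point.
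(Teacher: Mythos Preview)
Your proof is correct and follows the same overall strategy as the paper: collapse the spanning set to $\{e^\phi_{\gamma\sigma_j}\}_{j=1}^{t_\gamma}$, expand each generator in a linearly independent family of standard basis vectors, and read off the dimension as the rank of the resulting coefficient matrix. The one difference is the choice of that independent family. You expand in the vectors $e_{\gamma\sigma_k}$, which produces the matrix $N_{kj}=\sum_h\phi(\sigma_k^{-1}h\sigma_j)$ and then forces the row-permutation argument you flagged as the main obstacle. The paper instead expands in the vectors $e_{\gamma\sigma_i^{-1}}$: after writing $e^\phi_{\gamma\sigma_j}=\frac{\phi(e)}{|S|}\sum_{\tau\in C}\phi(\tau\sigma_j)e_{\gamma\tau^{-1}}$ and observing that $e_{\gamma\tau^{-1}}$ is constant on each class $[\sigma_i]$, the coefficient of $e_{\gamma\sigma_i^{-1}}$ is directly $\sum_{h\in C\cap G_\gamma}\phi(h\sigma_i\sigma_j)=(M_\gamma)_{ij}$, so $M_\gamma$ appears without any matching step. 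Both versions tacitly rely on the map $[\sigma]\mapsto[\sigma^{-1}]$ being well-defined on $C/\!\sim^\ast_\gamma$ (the paper needs it for the claim $e_{\gamma g_1^{-1}}=e_{\gamma g_2^{-1}}$ when $g_1\sim^\ast_\gamma g_2$), so neither argument is more general than the other; the paper's choice of basis simply absorbs your final step into the expansion itself.
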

\begin{proof} For each $j\in \{1,2,...,t_\gamma\}$ and $g_1,g_2 \in [\sigma_j]$, we have that $g_1=cg_2$, for some $c \in G_\gamma$. Thus
$$ \begin{array}{ccl}
     e^\phi_{\gamma g_1} & = & \frac{\phi(e)}{|S|}\sum_{\sigma \in C}\phi(\sigma g_1)e_{\gamma\sigma^{-1} }\\
      & = & \frac{\phi(e)}{|S|}\sum_{\sigma \in C}\phi(\sigma cg_2)e_{\gamma\sigma^{-1}} \\
      & = & \frac{\phi(e)}{|S|}\sum_{\tau c^{-1} \in C}\phi(\tau g_2)e_{\gamma c\tau^{-1}}; \tau:=\sigma c \\
      & = & \frac{\phi(e)}{|S|}\sum_{\tau  \in C}\phi(\tau g_2)e_{\gamma \tau^{-1}} = e^\phi_{\gamma g_2}.
   \end{array}
 $$
 Hence, $V_\gamma^\phi(C)  =  \langle e^{\phi}_{\gamma \sigma_j} | j=1,2,...,t_\gamma \rangle$.
 Moreover, note that $e_{\gamma g_1^{-1}}=e_{\gamma g_2^{-1}}$ if $g_1, g_2 \in [\sigma_i]$. This yields
 $$ e^\phi_{\gamma g} = \frac{\phi(e)}{|S|}\sum^{t_\gamma}_{i=1}\left(\sum_{\sigma\in[\sigma_i]}\phi(\sigma g)\right)e_{\gamma \sigma_i^{-1}},$$
for each $g\in C$.  However, $\sum_{\sigma\in[\sigma_i]}\phi(\sigma g)=\sum_{h\in C\cap G_\gamma}\phi(h\sigma_ig)$. So, we have
$$ e^\phi_{\gamma \sigma_j} = \frac{\phi(e)}{|S|}\sum^{t_\gamma}_{i=1}\left(\sum_{h\in C\cap G_\gamma}\phi(h\sigma_i\sigma_j)\right)e_{\gamma \sigma_i^{-1}},~~~~  1\leq j\leq t_\gamma.$$
The result follows by $(M_\gamma)_{ij}:=\sum_{h\in C\cap G_\gamma}\phi(h\sigma_i\sigma_j)$, for $1\leq i,j\leq t_\gamma$.
\end{proof}
In particular, as a special case of Proposition \ref{prop dim}, i.e., if $C$ is a cyclic subgroup of $G$, we obtain a dimension formula for $V_\gamma^\phi(C)$.
\begin{thm}\label{dimprop} Let $C= \left< \tau \right >\subseteq S$ be a cyclic subgroup of $G$ such that $C/\sim^\ast_\gamma=\{ [\tau], [\tau^2],...,[\tau^{t_\gamma}]\}$.  Denote $v_j=\sum_{h\in C\cap G_\gamma}\phi(h\tau^{t_\gamma-j})$ and $$d_\gamma=\left|\left\{ s\in \{ 0,1,2,...,t_\gamma-1\} \mid  \sum^{t_\gamma-1}_{j=0}v_je^{\frac{2\pi sji}{t_\gamma}}=0 \right\} \right|.$$  Then $t_\gamma=\frac{|C|}{|C\cap G_\gamma |}$ and
$\dim (V^{\phi}_\gamma(C))=t_\gamma - d_\gamma $.
\end{thm}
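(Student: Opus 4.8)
The plan is to recognize $M_\gamma$ from Proposition~\ref{prop dim} as a left-circulant (anti-circulant) matrix associated with the cyclic quotient $C/(C\cap G_\gamma)$, and to read off its rank from the discrete Fourier transform. First I would dispose of the group-theoretic bookkeeping: since $C=\langle\tau\rangle$ is cyclic, $C\cap G_\gamma$ is a subgroup of $C$; the $\sim^\ast_\gamma$-classes on $C$ are exactly the cosets of $C\cap G_\gamma$ (because $\sigma_1\sigma_2^{-1}\in G_\gamma$ with $\sigma_1,\sigma_2\in C$ forces $\sigma_1\sigma_2^{-1}\in C\cap G_\gamma$), so their number is $[C:C\cap G_\gamma]=|C|/|C\cap G_\gamma|$, which is the definition of $t_\gamma$; moreover $C\cap G_\gamma=\langle\tau^{t_\gamma}\rangle$, and one checks that $[\tau],[\tau^2],\dots,[\tau^{t_\gamma}]$ are pairwise distinct and hence form $C/\sim^\ast_\gamma$. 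By Proposition~\ref{prop dim} it then remains to prove $\operatorname{rank}(M_\gamma)=t_\gamma-d_\gamma$, where $(M_\gamma)_{ij}=\sum_{h\in C\cap G_\gamma}\phi(h\tau^{i+j})$ for $1\le i,j\le t_\gamma$.

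The structural point is that for each integer $\ell$ the map $h\mapsto h\tau^{\ell}$ sends $C\cap G_\gamma$ bijectively onto the coset $\tau^{\ell}(C\cap G_\gamma)$, which depends only on $\ell$ modulo $t_\gamma$. Hence, putting $c_\ell:=\sum_{h\in C\cap G_\gamma}\phi(h\tau^{\ell})$, the number $c_\ell$ depends only on $\ell \bmod t_\gamma$, we have $(M_\gamma)_{ij}=c_{(i+j)\bmod t_\gamma}$, and $v_j=c_{(t_\gamma-j)\bmod t_\gamma}$. Identifying the index set $\{1,\dots,t_\gamma\}$ with $\mathbb{Z}/t_\gamma\mathbb{Z}$, this says $M_\gamma$ is a left-circulant matrix. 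If $R$ denotes the $t_\gamma\times t_\gamma$ permutation matrix with $R_{kj}=1$ exactly when $k+j\equiv 0 \pmod{t_\gamma}$, then $(M_\gamma R)_{ij}=c_{(i-j)\bmod t_\gamma}$, so $M_\gamma R$ is an ordinary circulant matrix; since $R$ is invertible, $\operatorname{rank}(M_\gamma)=\operatorname{rank}(M_\gamma R)$.

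Finally I would invoke the diagonalization of circulants by the Fourier matrix: with $\omega=e^{2\pi i/t_\gamma}$, the eigenvalues of $M_\gamma R$ are $\lambda_s=\sum_{\ell=0}^{t_\gamma-1}c_\ell\,\omega^{s\ell}$ for $s=0,1,\dots,t_\gamma-1$ (the sign of the exponent is immaterial, since $s\mapsto -s$ permutes $\mathbb{Z}/t_\gamma\mathbb{Z}$), so that $\operatorname{rank}(M_\gamma R)=t_\gamma-\bigl|\{s:\lambda_s=0\}\bigr|$. Re-indexing the sum defining $d_\gamma$ by $k=(t_\gamma-j)\bmod t_\gamma$ gives $\sum_{j=0}^{t_\gamma-1}v_j\,\omega^{sj}=\sum_{k=0}^{t_\gamma-1}c_k\,\omega^{-sk}=\lambda_{(-s)\bmod t_\gamma}$, and since $s\mapsto(-s)\bmod t_\gamma$ is a bijection of $\{0,\dots,t_\gamma-1\}$ we conclude $d_\gamma=\bigl|\{s:\lambda_s=0\}\bigr|$. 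Therefore $\dim(V^\phi_\gamma(C))=\operatorname{rank}(M_\gamma)=t_\gamma-d_\gamma$.

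The linear algebra here — that the rank of a circulant matrix equals the number of nonvanishing Fourier coefficients of its symbol — is standard, so I expect the only delicate points to be the two bookkeeping steps: verifying carefully that $(M_\gamma)_{ij}$ depends only on $(i+j)\bmod t_\gamma$ (which is precisely what exhibits $M_\gamma$ as anti-circulant), and matching the index conventions in the definition of $d_\gamma$ against the eigenvalue list of $M_\gamma R$.
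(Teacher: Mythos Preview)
Your proof is correct and follows essentially the same route as the paper's own argument: both establish $t_\gamma=[C:C\cap G_\gamma]$ from the coset description of $\sim^\ast_\gamma$-classes, invoke Proposition~\ref{prop dim} to reduce to computing $\operatorname{rank}(M_\gamma)$, observe that $M_\gamma$ becomes a genuine circulant after a column permutation, and then read off the nullity from the vanishing of the associated polynomial at $t_\gamma$-th roots of unity. The only cosmetic difference is that the paper phrases the reduction as ``column operations'' and cites the formula $\operatorname{nullity}(M^{cir}_\gamma)=\deg\gcd(P_v(x),x^{t_\gamma}-1)$, whereas you make the permutation matrix $R$ explicit and appeal directly to the Fourier diagonalization of circulants; these are the same argument in different dress.
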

\begin{proof}  Note that under the equivalent relation $\sim^\ast_\gamma$ with $C/\sim^\ast_\gamma=\{ [\tau], [\tau^2],...,[\tau^{t_\gamma}]\}$, we have that
$[\tau^k]=\{\sigma \in C \mid  \sigma \tau^{-k} \in G_\gamma \}=\{h\tau^k \mid h \in C\cap G_\gamma  \}$.  So, $|[\tau^k] |=|C\cap G_\gamma|$ for all $k=1,2,...,t_\gamma$ and hence
$$ t_\gamma=|C/\sim^\ast_\gamma| =\frac{|C|}{|C\cap G_\gamma|}. $$
By rank nullity theorem and Proposition \ref{prop dim}, $$\dim (V^{\phi}_\gamma(C))=\operatorname{rank}(M_{\gamma})=t_\gamma - \operatorname{nullity(M_\gamma)}=t_\gamma-d_\gamma,$$
where $d_\gamma:=\operatorname{nullity(M_\gamma)}$.

To determine $d_\gamma$, we observe that, if $C$ is a cyclic subgroup of $G$, then $M_\gamma$ can be reduced to a circulant matrix  $M^{cir}_\gamma$ by doing a bit column operation.  Precisely, $M^{cir}_\gamma=(v_0,v_1,...,v_{t_\gamma-1})$, where, for each $j=0,1,...,t_\gamma-1$, $$v_j=\sum_{h\in C\cap G_\gamma}\phi(h\tau^{t_\gamma-j}).$$   It is well known that (see e.g. \cite{Kra and Simanca}).  $$\operatorname{nullity}(M^{cir}_\gamma)=\operatorname{deg}[\operatorname{gcd}(P_v(x),x^{t_\gamma}-1)],$$
where $P_v(x)=\sum^{t_\gamma-1}_{j=0}v_jx^j$.   Note that the set of all roots (over field $\mathbb{C}$) of $x^{t_\gamma}-1$ is $U:=\{e^\frac{2\pi s i}{t_\gamma} \mid 0\leq s < t_\gamma\}$.  Thus, common factors of $P_v(x)$ and  $x^{t_\gamma}-1$ must have roots in $U$ and hence,
$$ \begin{array}{ccl}
     \operatorname{deg}[\operatorname{gcd}(P_v(x),x^{t_\gamma}-1)] & = & |\{ s\in \mathbb{Z} \mid 0\leq s < t_\gamma \hbox{ and } P_v(e^{\frac{2\pi s i}{t_\gamma}})=0 \} | \\
      & = & |\{ s\in \mathbb{Z} \mid 0\leq s < t_\gamma \hbox{ and } \sum^{t_\gamma-1}_{j=0}v_je^{\frac{2\pi sji}{t_\gamma}}=0 \} |.
   \end{array}
 $$
Since the operator $\operatorname{rank}$ is invariant under column operations, $d_\gamma=\operatorname{nullity}(M^{cir}_\gamma)$ and thus the result follows.
\end{proof}
We utilize this theorem in the following sections.

\section{Dicyclic Group  $T_{4n}$}

The Dicyclic group $T_{4n}$ is defined as follows:
$$
T_{4n}=\langle r,s | r^{2n}=e,r^n=s^2,s^{-1}rs=r^{-1} \rangle.
$$
Explicitly, all elements of the group $T_{4n}$  may be given by $T_{4n}=\{r^{i}, sr^{i} | 0\leq i < 2n \}$.
By the classical Cayley theorem, $T_{4n}$ can be embedded in $S_{4n}$.  Precisely,
\begin{eqnarray*}
  r &=& (\begin{array}{ccccc}
           1 & 2 & 3 & \cdots & 2n
         \end{array}
  )(\begin{array}{ccccc}
      2n+1 & 2n+2 & 2n+3 & \cdots & 4n
    \end{array}
  ) \\
  s &=& (\begin{array}{cccc}
           1 & 2n+1 & n+1 & 3n+1
         \end{array}
  )(\begin{array}{cccc}
           2 & 4n & n+2 & 3n
         \end{array}
  ) \\
   && (\begin{array}{cccc}
           3 & 4n-1 & n+3 & 3n-1
         \end{array}
  )\cdots (\begin{array}{cccc}
           n-1 & 3n+3 & 2n-1 & 2n+3
         \end{array}
  ) \\
  && (\begin{array}{cccc}
           n & 3n+2 & 2n & 2n+2
         \end{array}
  ).
\end{eqnarray*}
$T_{4n}$ has $n+3$ conjugacy classes which are
$$\{e \}, \{r^{k},r^{2n-k} \}, 1\leq k\leq n,\{sr^{2k} \mid 0\leq k\leq n-1 \},\{sr^{2k+1} \mid 0\leq k\leq n-1 \}  $$
and the ordinary irreducible character of $T_{4n}$ are given by (see \cite{DP})
\begin{center}
\begin{tabular}{|c|c|c|c|c|}
  \hline
  \hline

   Characters & $r^k( 0\leq k\leq n)$ & $s$ & $rs$ \\
   \hline
   \hline
  $\chi_{0}$  & 1 & 1 & 1 \\
  \hline
 $\chi_{1}$  & $(-1)^k$& 1 & -1 \\
  \hline
 $\chi_{2}$ & 1 & -1 & -1 \\
  \hline
 $\chi_{3}$ & $(-1)^k$ & -1 & 1 \\
  \hline
  &&&\\
  $\psi_{j}$, where  & $2\cos(\frac{kj\pi}{n})$ & 0 & 0 \\
  $1\leq j\leq n-1$   &  &  &  \\

  \hline
  \hline
\end{tabular}\\
\end{center}
\begin{center}
    \textbf{Table I} The character table for $T_{4n}$, where $n$ is even.
\end{center}

\begin{center}
\begin{tabular}{|c|c|c|c|c|}
  \hline
  \hline

   Characters  & $r^k( 0\leq k\leq n)$ & $s$ & $rs$ \\
   \hline
   \hline
  $\chi'_{0}$ & 1 & 1 & 1 \\
  \hline
 $\chi'_{1}$  & $(-1)^k$& i & -i \\
  \hline
 $\chi'_{2}$ & 1 & -1 & -1 \\
  \hline
 $\chi'_{3}$  & $(-1)^k$ & -i & i \\
  \hline
  &&&\\
  $\psi'_{j}$, where & $2\cos(\frac{kj\pi}{n})$ & 0 & 0 \\
  $1\leq j\leq n-1$ &    &  &  \\

  \hline
  \hline
\end{tabular}\\
\end{center}

\begin{center}
    \textbf{Table II} The character table for $ T_{4n}$, where $n$ is odd.
\end{center}

Write $2n = lp^t$ with $l$ an integer not divisible by $p$ (where $p$ is our fixed prime number). We have
$$  \hat{G}=\left\{
              \begin{array}{ll}
                \{ r^{jp^t}, sr^{k}|  0 \leq j<l ,1\leq k\leq 2n \}, & \hbox{if $p\neq2$;} \\
                \{ r^{jp^t}|  0 \leq j<l  \}, & \hbox{if $p=2$.}
              \end{array}
            \right.
 $$
Thus the $p$-regular classes of $G$ are
$$\left\{
  \begin{array}{ll}
    \{ r^{jp^t},r^{(l-j)p^t}\};\hbox{ $0\leq j\leq\frac{l}{2}$}, \{ sr^{2k}| 1\leq k\leq n \},  \{ sr^{2k+1}| 0\leq k\leq n-1 \}, & \hbox{if $p\neq2$;} \\
     \{ r^{jp^t},r^{(l-j)p^t}\}; \hbox{ $0\leq j\leq\frac{l-1}{2}$} & \hbox{if $p=2$.}
  \end{array}
\right.
$$
For each $j$ and $h$ denote
$$
\hat{\psi}_j=\psi_j|_{\hat{G}}, ~~\hat{\chi}_h=\chi_h|_{\hat{G}} \hbox{  and  }\hat{\psi'}_j=\psi'_j|_{\hat{G}}, ~~\hat{\chi'}_h=\chi'_h|_{\hat{G}},
$$
and define $\epsilon=\left\{
                   \begin{array}{ll}
                     4, & \hbox{if $p\neq 2$;} \\
                     1, & \hbox{if $p=2$.}
                   \end{array}
                 \right.
$
\begin{prop}  The complete list of irreducible Brauer characters of $T_{4n}$ for even $n$ is
 $$\hat{\chi}_h~~(0\leq h < \epsilon),~~ \hat{\psi}_{j} ~~(1\leq j < \frac{l}{2}),$$
and for odd $n$ is
$$\hat{\chi'}_h ~~(0\leq h < \epsilon), ~~\hat{\psi'}_{j} ~~(1\leq j < \frac{l}{2}).$$
\end{prop}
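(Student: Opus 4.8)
The proposal is to determine $|\mathrm{IBr}(T_{4n})|$ from the list of $p$-regular classes already exhibited, to verify that every function in the proposed list is an irreducible Brauer character, to check that the list contains no repetitions, and finally to observe that its cardinality matches $|\mathrm{IBr}(T_{4n})|$; since the number of irreducible Brauer characters equals the number of $p$-regular classes, a repetition-free subset of $\mathrm{IBr}(T_{4n})$ of that size must be all of $\mathrm{IBr}(T_{4n})$.

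\emph{Counting.} Because $2n=lp^t$ with $p^t$ odd forces $l$ to be even when $p\neq 2$, while $l$ is odd when $p=2$, the enumeration of $p$-regular classes given above yields $|\mathrm{IBr}(T_{4n})|=\tfrac{l}{2}+3$ if $p\neq2$ and $|\mathrm{IBr}(T_{4n})|=\tfrac{l+1}{2}$ if $p=2$. On the other hand, the proposed list has $\epsilon$ characters of the form $\hat\chi_h$ (resp. $\hat\chi_h'$) and $\#\{\,j:1\le j<l/2\,\}$ characters of the form $\hat\psi_j$ (resp. $\hat\psi_j'$), the latter count being $\tfrac{l}{2}-1$ for $l$ even and $\tfrac{l-1}{2}$ for $l$ odd. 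Adding $\epsilon\in\{4,1\}$ accordingly reproduces exactly the two totals above.

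\emph{Irreducibility.} The restriction to $\hat G$ of an ordinary character is a Brauer character, and the restriction of a degree-one character is an irreducible Brauer character, since its module reduces modulo $p$ to a one-dimensional, hence simple, module; this handles the $\hat\chi_h$ and $\hat\chi_h'$. For the degree-two characters, realise $\psi_j$ by the representation $\rho_j$ over $\mathbb Z[\omega]$, $\omega=e^{i\pi/n}$ a primitive $2n$-th root of unity, with $\rho_j(r)=\mathrm{diag}(\omega^{j},\omega^{-j})$ and $\rho_j(s)$ the $2\times2$ matrix having zero diagonal and off-diagonal entries $1$ and $(-1)^j$. Reduce modulo a prime $\mathfrak p$ over $p$: the image $\bar\omega$ has multiplicative order equal to the $p'$-part $l$ of $2n$, so $\bar\rho_j(r)=\mathrm{diag}(\bar\omega^{j},\bar\omega^{-j})$, and $1\le j<l/2$ gives $0<2j<l$, hence $l\nmid 2j$ and $\bar\omega^{j}\neq\bar\omega^{-j}$. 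Thus $\bar\rho_j(r)$ has exactly two one-dimensional invariant subspaces, its two eigenlines, and $\bar\rho_j(s)$ interchanges them: it conjugates $\bar\rho_j(r)$ to its inverse, hence maps each eigenline to the other one, and being invertible it cannot fix either. Therefore the reduced module has no one-dimensional $G$-submodule, is simple, and its Brauer character equals $\hat\psi_j$; so $\hat\psi_j\in\mathrm{IBr}(T_{4n})$, and identically $\hat\psi_j'\in\mathrm{IBr}(T_{4n})$.

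\emph{Distinctness and conclusion.} Each $\hat\psi_j$ has value $2$ at $e$, so it equals no $\hat\chi_h$; and $\hat\psi_j=\hat\psi_{j'}$ forces $\bar\omega^{\pm j}=\bar\omega^{\pm j'}$, i.e. $j\equiv\pm j'\pmod l$, which for $1\le j,j'<l/2$ means $j=j'$. When $p\neq2$ both $r^{p^t}$ and $s$ lie in $\hat G$, and the four characters $\chi_0,\dots,\chi_3$ (resp. $\chi_0',\dots,\chi_3'$) take pairwise distinct pairs of values on them, so their restrictions are distinct; this is the case $\epsilon=4$. When $p=2$ one has $t\ge1$, $\hat G=\langle r^{2^t}\rangle$, every exponent $2^tm$ is even, and hence all of $\chi_0,\dots,\chi_3$ (resp. $\chi_0',\dots,\chi_3'$) restrict to the trivial character, leaving only $\hat\chi_0$ (resp. $\hat\chi_0'$); this is the case $\epsilon=1$. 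Thus the proposed list consists of pairwise distinct irreducible Brauer characters whose number equals $|\mathrm{IBr}(T_{4n})|$, so it is the complete list. I expect the reduction-mod-$\mathfrak p$ step — establishing the irreducibility of $\hat\psi_j$, and in particular pinning down the order of $\bar\omega$ so that the eigenvalue inequality $\bar\omega^{j}\neq\bar\omega^{-j}$ holds exactly for $1\le j<l/2$ — to be the crux; the remainder is bookkeeping across the cases $p$ odd versus $p=2$ and $n$ even versus $n$ odd.
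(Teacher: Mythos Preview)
Your proposal is correct and reaches the same conclusion, but the route differs from the paper's in one essential respect: the proof of irreducibility of the degree-two Brauer characters. The paper invokes the Fong--Swan theorem (valid since $T_{4n}$ is solvable) to know that every irreducible Brauer character is the restriction of some ordinary irreducible; then, to see that $\hat\psi_j$ is irreducible, it argues by contradiction that a decomposition $\hat\psi_j=\hat\chi_h+\hat\chi_k$ would force $2\cos(2p^tj\pi/n)=2$ upon evaluation at $r^{2p^t}$, which fails for $1\le j<l/2$. Distinctness is handled similarly by evaluating at $r^{p^t}$. Your approach instead realises $\psi_j$ by an explicit $2\times2$ matrix representation over $\mathbb{Z}[\omega]$, reduces modulo a prime over $p$, and shows the resulting $FG$-module is simple by checking that $\bar\rho_j(r)$ has two distinct eigenlines which $\bar\rho_j(s)$ swaps; distinctness then comes from comparing eigenvalue sets.

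What each buys: the paper's argument is pure character arithmetic and very short once Fong--Swan is granted, but that theorem is a genuine input. Your argument is self-contained---it never appeals to Fong--Swan---and gives structural information (the actual simple module), at the cost of writing down the representation and verifying that $\bar\omega$ has order exactly $l$ after reduction. Your explicit counting of $p$-regular classes versus list size is also more careful than the paper, which leaves that bookkeeping implicit. Both arguments are valid; yours is arguably the more robust, since it would still work for a non-solvable group with the same representation-theoretic setup.
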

\begin{proof} We first note that the restriction of a character of $T_{4n}$ to $\widehat{T}_{4n}$ is a Brauer character and the number of all the irreducible Brauer characters is the number of $p$-regular classes of $T_{4n}$.   Also, since $T_{4n}$ is solvable, by the Fong-Swan theorem, any irreducible Brauer character of $T_{4n}$ is the restriction of an ordinary irreducible character of $T_{4n}$.

The linear characters $\widehat{\chi}_{h}$'s and $\widehat{\chi'}_{h}$'s are obviously irreducible and distinct, by the character table above.  For the characters of dimension two, $\widehat{\psi}_{j}$ and $\widehat{\psi'}_{j}$, we claim that they are all distinct and irreducible for all $1\leq j < \frac{l}{2}$.  By the character tables above, there is no need to separate the proof into the case of odd $n$, even $n$ or $p=2$, $p\neq2$, since $\widehat{\psi}_{j}$ and $\widehat{\psi'}_{j}$ are agree on the columns $r^k$'s and agree to be zero outside these columns.

For the irreduciblity issue, we suppose for a contradiction that $$\widehat{\psi}_{j}=\widehat{\chi}_{h}+\widehat{\chi}_{k},$$ for some $0\leq h,k < \epsilon$ and $1\leq j < \frac{l}{2}$.
Since $1\leq j < \frac{l}{2}$, $l>2$ and $r^{2p^t}\in \widehat{T}_{4n}$.  So, we can evaluate both sides of the above equation by $r^{2p^t}$ and obtain that
$$2\cos(\frac{2p^tj\pi}{n})=2,$$
 which is impossible because $\cos(\frac{2p^tj\pi}{n})<1$ for all  $1\leq j < \frac{l}{2}$.

Analogously, for the issue of distinction, we suppose for a contradiction that $\widehat{\psi}_{j}=\widehat{\psi}_{i}$, for some $1\leq i < j <\frac{l}{2}$.  We now evaluate both sides by $r^{p^t}$, which yields that $$\cos(\frac{p^tj\pi}{n})=\cos(\frac{p^ti\pi}{n}).$$  It implies that, for $\frac{p^tj\pi}{n}$ and $\frac{p^tj\pi}{n}$, their difference or their sum must be a multiple of $2\pi$.  However, this is not the case because $1\leq i < j <\frac{l}{2}$.

\end{proof}

\begin{thm}
 \label{t26}
 Let $G=T_{4n}$, $0 \leq h<\epsilon$ where $\epsilon=4$ if $p\neq2$ and $\epsilon=1$ if $p=2$,  and put $\phi=\hat{\chi}_h$ or $\hat{\chi'}_h $ . The space $V_\phi(G)$ has an o-basis if and only if at least one of the following holds:\\
(i) $\dim V=1$\\
(ii) $p = 2$,\\
(iii) $2n$ is not divisible by $p$.
\end{thm}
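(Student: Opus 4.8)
The plan is to apply Proposition~\ref{separated lemma}: the space $V_\phi(G)$ has an o-basis if and only if $V^\phi_\gamma(C)$ has an o-basis for every $\gamma \in \Delta$, where here $\phi$ is a \emph{linear} Brauer character, so the relevant support is $S = \hat G$ itself (a linear character never vanishes). Thus $C = S = \hat G$, which is abelian only when $p \neq 2$ and $l \le 2$, but in general $C$ is the whole $p$-regular set and is \emph{not} cyclic, so Theorem~\ref{dimprop} does not apply directly. Instead I would first dispose of the easy directions. If $\dim V = 1$ then $\Gamma^m_k$ has a single element and $V_\phi(G)$ is at most one-dimensional, hence trivially has an o-basis; this gives (i). If $p = 2$, then $\hat G = \{ r^{jp^t} \mid 0 \le j < l\}$ is a \emph{cyclic} group $\langle r^{p^t}\rangle$, and $\phi = \hat\chi_0$ is the trivial character (since $\epsilon = 1$), so $\phi$ is a nonzero constant function on the subgroup $S = C = \hat G$; Proposition~\ref{prop111} then immediately gives an o-basis, handling (ii). If $2n$ is not divisible by $p$, then $t = 0$, $l = 2n$, $\hat G = G$, and $\phi = \chi_h$ is an ordinary \emph{linear} character of $G$; for linear ordinary characters the symmetrizer is idempotent and one invokes the classical fact (e.g. via Corollary~\ref{c2} / Freese-type arguments, or directly that $\langle e^\phi_{\alpha\sigma_1}, e^\phi_{\alpha\sigma_2}\rangle$ is either $0$ or has modulus $|G_\alpha|/|G|$) that $V_\phi(G)$ always has an o-basis; this is (iii).

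The substantive content is the converse: assuming none of (i)--(iii) holds, i.e. $\dim V \ge 2$, $p$ is odd, and $p \mid 2n$ (so $t \ge 1$, $l < 2n$, and $\hat G \subsetneq G$), I must exhibit a single orbit representative $\gamma$ for which $V^\phi_\gamma(C)$ fails to have an o-basis. I would choose $\gamma$ so that $G_\gamma$ is small but nontrivial in a way that makes the inner product matrix have an undesirable pattern — a natural candidate is a $\gamma$ with $G_\gamma = \langle s \rangle$-type or $G_\gamma = \{e, sr^i\}$ (an order-two subgroup generated by a reflection-type element), which is possible precisely when $\dim V \ge 2$. With $C = \hat G$ (for $p$ odd this is $\{ r^{jp^t}\} \cup \{ sr^k\}$), I would compute the Gram matrix of the candidate tensors $\{e^\phi_{\gamma g} : g \in \hat G\}$ using the inner product formula (\ref{innert3}) with $\phi = \hat\chi_h$ linear, reduce modulo the $\sim_\gamma$ equivalence, and show it has a pair of vectors that are neither orthogonal nor proportional, forcing the failure of the o-basis property on that orbit; by Proposition~\ref{separated lemma} (or Theorem~\ref{thm111}) this propagates to $V_\phi(G)$.

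The main obstacle is the explicit choice of the orbit $\gamma$ and the verification that its stabilizer has the required structure inside the embedding $T_{4n} \hookrightarrow S_{4n}$: one must pick a sequence $\alpha \in \Gamma^m_k$ whose stabilizer is exactly a two-element subgroup meeting $\hat G$ nontrivially (note $sr^k$ has order $4$, so it is $p$-regular for odd $p$, but $(sr^k)^2 = r^n$ need not be, so the order-two subgroup $\langle r^n\rangle$ may or may not lie in $\hat G$ depending on whether $p \mid n$). The bookkeeping of which small subgroups of $T_{4n}$ arise as point-stabilizers, combined with tracking how the nonvanishing of $\phi$ on the reflection coset $\{sr^k\}$ interacts with the cyclic part, is where the real work lies; everything after that choice is a finite rank computation on a structured (block-circulant-like) matrix, of the kind already carried out in Theorem~\ref{dimprop}. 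I would also double-check the boundary case $p \mid n$ but $p^t \nmid n$ separately, since the parity of $l$ changes the list of $p$-regular reflection classes and hence the size of $C$.
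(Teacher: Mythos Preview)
Your treatment of the three sufficiency directions (i)--(iii) matches the paper's proof essentially verbatim, so that part is fine.

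For the converse, however, your plan diverges from the paper and has genuine gaps. First, Proposition~\ref{separated lemma} cannot be invoked here: the standing hypothesis of Section~3 is that the support $C$ of $\phi$ is a \emph{subgroup} of $G$. For a linear Brauer character $\phi$ is nowhere zero, so $C = \hat G$; but when $p \neq 2$ and $p \mid 2n$ the set $\hat G$ is \emph{not} a subgroup (for instance $sr^0 \cdot sr^0 = r^n$, which lies in $\hat G$ only if $p^t \mid n$, while $sr^0 \cdot sr^1 = r^{n-1}$ fails whenever $p^t \nmid n-1$). So the decomposition machinery of Section~3 is simply unavailable in this case. Second, your proposed stabilizers $G_\gamma = \{e, sr^i\}$ do not exist in $T_{4n}$: you yourself note $(sr^i)^2 = r^n \neq e$, so $sr^i$ has order $4$ and $\{e, sr^i\}$ is not closed. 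Any stabilizer containing $sr^i$ must also contain $r^n$, which changes the bookkeeping substantially.

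The paper sidesteps both issues with a much simpler idea. It takes $\alpha = (1,2,\dots,2)$ so that $G_\alpha = \{e\}$ is \emph{trivial}, and then observes the crucial fact $G \subseteq \hat G^{\,2}$: since every $sr^k$ is $p$-regular (order $4$, $p$ odd), one has $r^i = (sr^n)(sr^i) \in \hat G^{\,2}$ and trivially $sr^i \in \hat G^{\,2}$. Hence for every $\sigma \in G$ the set $A = \{\mu \in \hat G \mid \sigma \in \hat G\mu\}$ is nonempty, and Corollary~\ref{c2} gives $\langle e^\phi_{\alpha\sigma}, e^\phi_\alpha\rangle \neq 0$ for \emph{all} $\sigma \in G$. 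A direct comparison of coefficients at $e_{\alpha r}$ (using $r \notin \hat G$) shows $e^\phi_\alpha$ and $e^\phi_{\alpha r}$ are linearly independent, so $\dim V^\phi_\alpha(G) \ge 2$; Proposition~\ref{rebasis} then says an o-basis would have to contain $e^\phi_\alpha$, contradicting the non-orthogonality just established. The key point you are missing is the containment $G \subseteq \hat G^{\,2}$, which makes the trivial-stabilizer orbit do all the work and renders the search for a special nontrivial $G_\gamma$ unnecessary.
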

\begin{proof}
(i) If $\dim V=1$, then $V_\phi(G)=\langle e^{\phi}_{\alpha} \mid \alpha\in \Gamma^{4n}_{1}\rangle$ has only at most one generator, namely, $e^{\phi}_{\alpha}$ where $\alpha=(1,1,1...,1)$.  So, $\dim V_\phi(G) \leq 1$ and thus $V_\phi(G)$ has an o-basis.\\
(ii) If $p=2$ then $\widehat{G}=\langle r^{p^t}\rangle$. Since $\widehat{G}$ is a subgroup of $G$ and $\phi$ is constant on $\widehat{G}$ , it  follows by Proposition \ref{prop111} that $V_\phi(G)$ has an o-basis.\\
(iii)  Assume $p\neq 2$ and $2n$ is not divisible by $p$. Then $\hat{G}=G$ and consequently, these characters will be ordinary linear characters. Thus $V_\phi(G)$ has an o-basis.

Conversely, we assume that  $\dim V >1$ and $p\neq 2$ and $2n$ is divisible by $p$.  So, $r \notin \widehat{G}$ and $\widehat{G}=\{r^{jp^{t}}, sr^{k} \mid 0\leq j<l, 1\leq k \leq 2n \}=\widehat{G}^{-1}$.   We will show that $V_{\phi}(G)$ does not have an o-basis. For $\alpha=(1,2,..,2,2)  \in \Gamma^{4n}_{\dim V}$, we have $G_{\alpha}=\{e\}$. Now, we concentrate on $\langle e^{\phi}_{\alpha\sigma}, e^{\phi}_{\alpha}\rangle$, for each $\sigma \in G$.  We observe that  $A=\{ \mu \in \widehat{G} \mid e \in \sigma\mu^{-1}\widehat{G}\}=\{ \mu \in \widehat{G} \mid \sigma \in\widehat{G}\mu\}$.  Since $r^{i}=(sr^n)(sr^{i})\in \widehat{G}^{2}$ for each $0\leq i<2n$, $G\subseteq\widehat{G}^{2}$ and hence $A\neq\emptyset$.  Thus by Corollary \ref{c2}, we have
\begin{equation}\label{eqp111}
    \langle e^{\phi}_{\alpha \sigma }, e^{\phi}_{\alpha} \rangle \neq 0 \hbox{ for each } \sigma \in G.
\end{equation}

 Next, we claim that $\{ e^{\phi}_{\alpha r},  e^{\phi}_{\alpha}\}\subseteq V^{\phi}_{\alpha}(G)$ is a linearly independent set.  We can set $e^{\phi}_{\alpha}= \sum_{\delta}c_{\delta}e_{\delta}$ and $e^{\phi}_{\alpha r}= \sum_{\delta}d_{\delta}e_{\delta}$ as $\{e_{\delta}| \delta \in \Gamma^{4n}_{\dim V} \}$ forms a basis for $V^{\otimes 4n}$.  Since $\widehat{G}^{-1}=\widehat{G}$, $$e^{\phi}_{\alpha}=\frac{\phi(1)}{|\widehat{G}|}\sum_{\sigma\in \widehat{G}}\phi(\sigma^{-1})e_{\alpha\sigma}.$$
Since $G_{\alpha}=\{e\}$, the elements $\alpha \sigma$ with $\sigma \in G$ are distinct.  Also, since $r \notin \widehat{G}$, $\alpha\sigma \neq\alpha r$ for all
$\sigma\in\widehat{G}$, which yields that $c_{r}=0$. On the other hand,  $G_{\alpha r}=r^{-1} G_{\alpha} r=\{ e\}$, so for $r \in \widehat{G}$, $(\alpha r)\sigma=\alpha r$ if
and only if $\sigma=e$.  This implies that $d_{r}=\frac{1}{|\widehat{G}|}\neq c_{r}=0$, which implies that $\{ e^{\phi}_{\alpha r},  e^{\phi}_{\alpha}\}\subseteq V^{\phi}_{\alpha}(G)$ is a linearly independent set and hence $\dim V_{\alpha}^{\phi}(G)\geq2$.

 By Proposition \ref{rebasis}, if $V_{\alpha}^{\phi}(G)$ were to have an o-basis, then it would have an o-basis containing $e^{\phi}_{\alpha}$, but, by (\ref{eqp111}), this is not the case.  So, $V_{\alpha}^{\phi}(G)$ does not have an o-basis, and by Theorem \ref{thm111}, we complete the proof.
\end{proof}

For higher dimensional irreducible Brauer characters $\phi: \widehat{G}\longrightarrow\mathbb{C}$, we see that  if $\dim V=1$, then $V_\phi(G)=\langle e^{\phi}_{\alpha} \mid \alpha\in \Gamma^{4n}_{1}\rangle$ has only at most one generator, namely, $e^{\phi}_{\alpha}$ where $\alpha=(1,1,1...,1)$.  So, $\dim V_\phi(G) \leq 1$ and thus $V_\phi(G)$ has an o-basis.  If $\dim V>1$, we investigate a necessary condition of the existence of an o-basis for Dicyclic groups as follows.
\begin{prop}\label{dimen T} For $G=T_{2(lp^t)}$ with $C\cap G_\gamma=<r^{t_\gamma p^t}>$ where $t_\gamma=\frac{l}{|C\cap G_\gamma|}$, $\gamma\in \Delta$ and $\phi=\hat{\psi}_b, \hat{\psi'}_b $, where $ 1 \leq b< \frac{l}{2}$, we have that
$$\dim(V_\gamma^\phi(C))=\left\{
                           \begin{array}{ll}
                             2, & \hbox{if $\frac{bt_\gamma }{l}\in \mathbb{Z}$;} \\
                             0, & \hbox{if $\frac{bt_\gamma }{l}\notin \mathbb{Z}$.}
                           \end{array}
                         \right.
  $$
\end{prop}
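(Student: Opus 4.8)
The plan is to apply Theorem \ref{dimprop} with the cyclic subgroup $C=\langle r^{p^{t}}\rangle\subseteq\widehat{G}$ of order $l$, which carries the support of $\phi$: indeed $\widehat{\psi}_{b}$ and $\widehat{\psi'}_{b}$ vanish on $\widehat{G}\setminus\langle r^{p^{t}}\rangle$ (this set equals $\{sr^{k}\}$ when $p\neq 2$, where $\psi_{b},\psi'_{b}$ are zero, and is empty when $p=2$). Writing $\tau=r^{p^{t}}$ and $m_{\gamma}=|C\cap G_{\gamma}|=l/t_{\gamma}$, the hypothesis $C\cap G_{\gamma}=\langle r^{t_{\gamma}p^{t}}\rangle$ becomes $C\cap G_{\gamma}=\langle\tau^{t_{\gamma}}\rangle=\{\tau^{kt_{\gamma}}\mid 0\le k<m_{\gamma}\}$, and Theorem \ref{dimprop} reduces the task to computing the numbers $v_{j}$ ($0\le j<t_{\gamma}$) and the count $d_{\gamma}$, after which $\dim V_{\gamma}^{\phi}(C)=t_{\gamma}-d_{\gamma}$.

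The one piece of character data I would use is that, since $2n=lp^{t}$, for every integer $m$
\[
\phi(\tau^{m})=\phi(r^{mp^{t}})=2\cos\!\left(\frac{mp^{t}b\pi}{n}\right)=2\cos\!\left(\frac{2\pi mb}{l}\right)=\zeta^{mb}+\zeta^{-mb},\qquad\zeta:=e^{2\pi i/l},
\]
the formula being identical for $\widehat{\psi}_{b}$ and $\widehat{\psi'}_{b}$ because $\psi_{j}$ and $\psi'_{j}$ agree on powers of $r$. Substituting this into $v_{j}=\sum_{h\in C\cap G_{\gamma}}\phi(h\tau^{t_{\gamma}-j})=\sum_{k=0}^{m_{\gamma}-1}\phi(\tau^{kt_{\gamma}+t_{\gamma}-j})$ and pulling the $k$-independent factor out of the two geometric sums gives
\[
v_{j}=\zeta^{(t_{\gamma}-j)b}\sum_{k=0}^{m_{\gamma}-1}\zeta^{kt_{\gamma}b}+\zeta^{-(t_{\gamma}-j)b}\sum_{k=0}^{m_{\gamma}-1}\zeta^{-kt_{\gamma}b}.
\]
Because $\zeta^{t_{\gamma}}=e^{2\pi i/m_{\gamma}}$, each inner sum equals $m_{\gamma}$ when $m_{\gamma}\mid b$ and $0$ otherwise; and $m_{\gamma}\mid b\iff b/m_{\gamma}=bt_{\gamma}/l\in\mathbb{Z}$.

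If $bt_{\gamma}/l\notin\mathbb{Z}$, then every $v_{j}=0$, so $\sum_{j=0}^{t_{\gamma}-1}v_{j}e^{\frac{2\pi sji}{t_{\gamma}}}=0$ for all $s$; hence $d_{\gamma}=t_{\gamma}$ and $\dim V_{\gamma}^{\phi}(C)=0$. If $bt_{\gamma}/l\in\mathbb{Z}$, set $b'=b/m_{\gamma}$; then $\zeta^{t_{\gamma}b}=1$, so $v_{j}=m_{\gamma}\left(\xi^{jb'}+\xi^{-jb'}\right)$ with $\xi:=e^{2\pi i/t_{\gamma}}$, and for $0\le s<t_{\gamma}$
\[
\sum_{j=0}^{t_{\gamma}-1}v_{j}e^{\frac{2\pi sji}{t_{\gamma}}}=m_{\gamma}\left(\sum_{j=0}^{t_{\gamma}-1}\xi^{j(s+b')}+\sum_{j=0}^{t_{\gamma}-1}\xi^{j(s-b')}\right),
\]
which is $t_{\gamma}m_{\gamma}$ when $s\equiv\pm b'\pmod{t_{\gamma}}$ and $0$ otherwise.

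The delicate point --- the only real obstacle --- is to use the standing hypothesis $1\le b<l/2$, which forces $0<b'<t_{\gamma}/2$, so that $s\equiv b'$ and $s\equiv-b'$ determine two \emph{distinct} nonzero residues $b'$ and $t_{\gamma}-b'$ in $\{0,1,\dots,t_{\gamma}-1\}$ (they would coincide only if $2b'\equiv0\pmod{t_{\gamma}}$, which $0<b'<t_{\gamma}/2$ rules out). Thus exactly $t_{\gamma}-2$ of the exponential sums above vanish, so $d_{\gamma}=t_{\gamma}-2$, and Theorem \ref{dimprop} yields $\dim V_{\gamma}^{\phi}(C)=t_{\gamma}-(t_{\gamma}-2)=2$. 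Everything besides this distinctness observation is a routine substitution into Theorem \ref{dimprop}.
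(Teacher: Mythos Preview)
Your proof is correct and follows essentially the same route as the paper's: both apply Theorem~\ref{dimprop}, evaluate the $v_{j}$ explicitly, and then count the values of $s$ for which the exponential sum is nonzero, using $1\le b<l/2$ to ensure the two surviving residues $s=bt_{\gamma}/l$ and $s=t_{\gamma}-bt_{\gamma}/l$ are distinct. The only difference is cosmetic---you express $\phi(\tau^{m})=\zeta^{mb}+\zeta^{-mb}$ and reduce everything to geometric sums of roots of unity, whereas the paper keeps the real form $2\cos(2bj\pi/l)$ and handles the sum $\sum v_{j}e^{2\pi sji/t_{\gamma}}$ by splitting into real and imaginary parts via trigonometric identities.
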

\begin{proof}  Since $G$ has cyclic support with $C=<r^{p^t}>$ and $C\cap G_\gamma=<r^{t_\gamma p^t}>$, for each $\phi$, we can compute the dimension by using Proposition \ref{dimprop}.  By character tables and basic trigonometry identities, we compute that
$$\begin{array}{ccl}
    v_j=\sum_{h\in C\cap G_\gamma}\phi(h\tau^{t_\gamma-j}) & = & \sum_{m=1}^{l/t_{\gamma}}\phi(r^{(mt_\gamma-j)p^t}) \\
     & = & \sum_{m=1}^{l/t_{\gamma}}2\cos\left(m(\frac{2bt_\gamma}{l})\pi-(\frac{2bj}{l})\pi  \right)\\
     &=&\left\{
          \begin{array}{ll}
            2(\frac{l}{t_\gamma})\cos((\frac{2bj}{l})\pi), & \hbox{$\frac{bt_\gamma}{l}\in\mathbb{Z}$;} \\
            0, & \hbox{$\frac{bt_\gamma}{l}\notin\mathbb{Z}$.}
          \end{array}
        \right.
  \end{array}
  $$
So, if $\frac{bt_\gamma}{l}\notin\mathbb{Z}$, then $d_\gamma=t_\gamma$ and thus $\dim(V_\gamma^\phi(C))=t_\gamma-t_\gamma=0$.

For $d_\gamma$ in which $\frac{bt_\gamma}{l}\in\mathbb{Z}$, we have that $\sum^{t_\gamma-1}_{j=0}v_je^{\frac{2\pi sji}{t_\gamma}}=0$ if and only if $$\sum^{t_\gamma-1}_{j=0}2\cos((\frac{2bj}{l})\pi)\cos((\frac{2sj}{t_\gamma})\pi) \hbox{  and  } \sum^{t_\gamma-1}_{j=0}2\cos((\frac{2bj}{l})\pi)\sin((\frac{2sj}{t_\gamma})\pi),$$
are simultaneously zero.  Since $\frac{bt_\gamma}{l}\in\mathbb{Z}$, the second sum is always zero and the first sum is zero for all $0\leq s< t_\gamma$ except for $\frac{b}{l}\pm\frac{s}{t_\gamma} \in \mathbb{Z}$; (i.e. except for $s= t_\gamma - \frac{bt_\gamma}{l}$ or $s= \frac{bt_\gamma}{l}$), because $0<\frac{b}{l}+\frac{s}{t_\gamma} < 2$ and $-1< \frac{b}{l}-\frac{s}{t_\gamma} < 1$.  Hence $d_\gamma= t_\gamma -2$ and thus the results follow.
\end{proof}

There is no surprise with the assertion that $\dim(V_\gamma^\phi(C))=0$ for which $\frac{bt_\gamma}{l}\notin\mathbb{Z}$ because;
\begin{prop}\label{nonzeroT} For $G=T_{2(lp^t)}$ with $C=<r^{ p^t}>$ and $C\cap G_\gamma=<r^{t_\gamma p^t}>$ where $t_\gamma=\frac{l}{|C\cap G_\gamma|}$, $\gamma\in \Delta$ and $\phi=\hat{\psi}_b, \hat{\psi'}_b $, where $ 1 \leq b< \frac{l}{2}$, we have that, for each $\sigma \in C$,  $$ e^{\phi}_{\gamma\sigma}\ = 0 \hbox{  if and only if  }\frac{bt_\gamma}{l}\notin\mathbb{Z}.$$
\end{prop}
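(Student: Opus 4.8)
The plan is to obtain the statement as a quick consequence of the dimension computation in Proposition \ref{dimen T}, using the elementary fact that the vectors $e^{\phi}_{\gamma\sigma}$, with $\sigma$ ranging over $C$, are all zero or all non-zero simultaneously. First I would record that, since $\widehat\psi_b$ and $\widehat{\psi'}_b$ agree on the powers of $r$ (hence on $C$) and both vanish outside the $r^k$-columns, the two cases of $\phi$ coincide here; and that $\phi$ together with $S=\widehat G=\widehat T_{4n}$ satisfies the hypotheses of Lemma \ref{l1}: the set of $p$-regular elements is a union of conjugacy classes of $G$, hence closed under $G$-conjugation, and a Brauer character is constant on each $p$-regular class, hence on the conjugacy classes of $G$ on which it is defined.

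Granting this, the argument runs as follows. By Lemma \ref{l1}, for every $\sigma\in C\subseteq G$ we have $\sigma^{-1}e^{\phi}_{\gamma}=e^{\phi}_{\gamma\sigma}$; since $\sigma^{-1}$ acts invertibly on $V^{\otimes 4n}$, this forces $e^{\phi}_{\gamma\sigma}=0$ if and only if $e^{\phi}_{\gamma}=e^{\phi}_{\gamma e}=0$. Therefore $e^{\phi}_{\gamma\sigma}=0$ for one $\sigma\in C$ exactly when $e^{\phi}_{\gamma\sigma}=0$ for every $\sigma\in C$, i.e. exactly when $V^{\phi}_{\gamma}(C)=\langle e^{\phi}_{\gamma g}\mid g\in C\rangle=\{0\}$, that is $\dim V^{\phi}_{\gamma}(C)=0$. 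By Proposition \ref{dimen T} the latter happens precisely when $\tfrac{bt_\gamma}{l}\notin\mathbb Z$ (and $\dim V^{\phi}_{\gamma}(C)=2$ otherwise), which is the desired equivalence.

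If one prefers a self-contained route not invoking Lemma \ref{l1}, one can argue directly from the expansion obtained in the proof of Proposition \ref{prop dim}, namely
$$ e^{\phi}_{\gamma\tau^{k}}=\frac{\phi(e)}{|S|}\sum_{i=1}^{t_\gamma}\Big(\sum_{h\in C\cap G_\gamma}\phi(h\tau^{i+k})\Big)e_{\gamma\tau^{-i}}, $$
in which the $t_\gamma$ vectors $e_{\gamma\tau^{-i}}$ are distinct members of the orthonormal basis of $V^{\otimes 4n}$ and hence linearly independent; so $e^{\phi}_{\gamma\tau^{k}}=0$ iff $\sum_{h\in C\cap G_\gamma}\phi(h\tau^{i+k})=0$ for all $i$. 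The trigonometric computation already performed in the proof of Proposition \ref{dimen T} gives $\sum_{h\in C\cap G_\gamma}\phi(h\tau^{m})=0$ for every $m$ when $\tfrac{bt_\gamma}{l}\notin\mathbb Z$, and equals $2\tfrac{l}{t_\gamma}\cos\!\big(\tfrac{2bm\pi}{l}\big)$ when $\tfrac{bt_\gamma}{l}\in\mathbb Z$; since $1\le b<\tfrac l2$ we have $l\nmid 2b$, so $\cos\!\big(\tfrac{2bm\pi}{l}\big)$ cannot vanish at two consecutive integers $m$, and therefore is non-zero for at least one of the $t_\gamma$ consecutive values $m=k+1,\dots,k+t_\gamma$. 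Either way the equivalence follows.

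Given Proposition \ref{dimen T}, this proposition is essentially a corollary, so there is no serious obstacle; the only point requiring a little care is the passage ``some $\sigma$'' $\Leftrightarrow$ ``every $\sigma$'', i.e. the fact that the vanishing of $e^{\phi}_{\gamma\sigma}$ does not depend on the particular $\sigma\in C$ chosen --- delivered either by the $G$-action argument via Lemma \ref{l1}, or, in the direct route, by the observation that a non-vanishing coefficient must occur among $i=1,\dots,t_\gamma$ whenever $\tfrac{bt_\gamma}{l}\in\mathbb Z$, which is where the hypothesis $1\le b<\tfrac l2$ enters.
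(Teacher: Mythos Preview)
Your proof is correct, and it takes a genuinely different route from the paper's. The paper proves the proposition by a direct computation of the norm $\langle e^{\phi}_{\gamma\sigma},e^{\phi}_{\gamma\sigma}\rangle$ via formula (\ref{inner C}): writing $\mu=r^{jp^t}$ and $\tau=r^{kt_\gamma p^t}$ and using the character tables, the double sum becomes $4\sum_{j=1}^{l}\sum_{k=1}^{l/t_\gamma}\cos(\tfrac{2jb\pi}{l})\cos\!\big(2k(\tfrac{bt_\gamma}{l})\pi+\tfrac{2jb\pi}{l}\big)$, which collapses to $0$ when $\tfrac{bt_\gamma}{l}\notin\mathbb Z$ and to a manifestly positive multiple of $\sum_{j=1}^{l}\cos^2(\tfrac{2jb\pi}{l})$ otherwise. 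Your argument instead observes, via Lemma~\ref{l1}, that the vanishing of $e^{\phi}_{\gamma\sigma}$ is independent of $\sigma\in C$ and is therefore equivalent to $\dim V^{\phi}_{\gamma}(C)=0$, which is exactly the dichotomy already established in Proposition~\ref{dimen T}. This is more economical---no new trigonometric computation is needed, and the proposition appears as an immediate corollary of the dimension formula---whereas the paper's approach is self-contained and yields an explicit expression for the norm. Your alternative coefficient argument is also valid; the point that $\cos(\tfrac{2bm\pi}{l})$ cannot vanish at two consecutive integers (else $\tfrac{2b}{l}\in\mathbb Z$, contradicting $1\le b<\tfrac l2$) is the correct way to handle the case $\tfrac{bt_\gamma}{l}\in\mathbb Z$, and one may note that this case forces $t_\gamma\ge 3$, so the required run of consecutive integers is available.
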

\begin{proof} Let $\sigma \in C$ and $\gamma \in \Delta$.  By (\ref{inner C}),
$$ \begin{array}{ccl}
      \langle e^{\phi}_{\gamma \sigma}, e^{\phi}_{\gamma \sigma} \rangle & = &  \frac{|\phi(e)|^2}{|\widehat{G}|^2}\sum_{\mu \in C} \sum_{  \tau \in  C  \cap G_{\gamma}} \phi(\mu) \overline{\phi(\mu\tau)} \\
     & = &4 \frac{|\phi(e)|^2}{|\widehat{G}|^2}\sum^l_{j=1} \sum^{l/t_{\gamma}}_{k=1}\cos(\frac{2jb\pi}{l}) \cos(2k(\frac{bt_\gamma }{l})\pi+\frac{2jb\pi}{l}) \\
    &=& \left\{
          \begin{array}{ll}
            (\frac{4l}{t_\gamma})( \frac{|\phi(e)|^2}{|\widehat{G}|^2})\sum^l_{j=1}\cos^2(\frac{2jb\pi}{l}), & \hbox{if $\frac{bt_\gamma}{l}\in\mathbb{Z}$;} \\
            0, & \hbox{if $\frac{bt_\gamma}{l}\notin\mathbb{Z}$,}
          \end{array}
        \right.
   \end{array}
 $$
which completes the proof.
\end{proof}
 Now, by the above propositions, we achieve the main conclusion.
\begin{thm}  Let $G=T_{4n}$, where $2n=lp^t$ with $l$ an integer not divisible by $p$ and let $\phi=\hat{\psi}_b, \hat{\psi'}_b $, where $ 1 \leq b< \frac{l}{2}$.  Then,   $V_{\phi}(G)$ has an o-basis if and only if $\nu_2(\frac{2b}{l})<0$.
\end{thm}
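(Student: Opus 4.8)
The plan is to run the reduction machinery developed above with the cyclic subgroup $C=\langle r^{p^{t}}\rangle$ of $\widehat G$ (as in Propositions~\ref{dimen T} and~\ref{nonzeroT}), outside of which $\phi=\hat\psi_b$ (or $\hat\psi'_b$) vanishes, and then to reduce the whole question to an elementary statement about $2$-adic valuations. First I would dispose of the trivial case $\dim V=1$, where $V_\phi(G)=0$, and henceforth assume $\dim V\ge 2$. By Proposition~\ref{separated lemma} it suffices to decide, for each $\gamma\in\Delta$, whether $V^\phi_\gamma(C)$ has an o-basis. By Proposition~\ref{dimen T} one has $\dim V^\phi_\gamma(C)\in\{0,2\}$; an orbit with $\dim V^\phi_\gamma(C)=0$ contributes the empty o-basis, so only the orbits with $\tfrac{bt_\gamma}{l}\in\mathbb Z$ (equivalently $\dim V^\phi_\gamma(C)=2$) matter. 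On such an orbit Proposition~\ref{nonzeroT} guarantees that every generator $w_a:=e^\phi_{\gamma c^{a}}$, with $c:=r^{p^{t}}$, is nonzero, and since $C/\!\sim^{\ast}_\gamma=\{[c],\dots,[c^{t_\gamma}]\}$ the two-dimensional space $V^\phi_\gamma(C)$ is spanned by $w_1,\dots,w_{t_\gamma}$; hence it has an o-basis \emph{if and only if two of the $w_a$ are orthogonal}. (For the ``only if'' part, $V^\phi_\gamma(C)$ is an orthogonal summand of $V^\phi_\gamma(G)$ by Lemma~\ref{separate p1} and of $V_\phi(G)$ by Theorem~\ref{thm111}, so every nonzero $e^\phi_\beta$ lying in it must be one of the $w_a$, whence any o-basis of $V^\phi_\gamma(C)$ has the form $\{w_i,w_j\}$ with $w_i\perp w_j$.)

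The heart of the matter is therefore the inner product $\langle w_a,w_{a'}\rangle$ on such an orbit. Inserting $\sigma_1=c^{a}$ and $\sigma_2=c^{a'}$ into (\ref{inner C}) and using $\sigma_1 C\sigma_2^{-1}\cap G_\gamma=C\cap G_\gamma=\langle c^{t_\gamma}\rangle$, one expands $\phi$ on powers of $c$ through the character table and obtains a double sum of products of two cosines. The product-to-sum identity splits each term into a ``sum'' part, whose outer summation is $\sum_{j}\cos\!\big(\tfrac{4jb\pi}{l}+\mathrm{const}\big)=0$ because $0<\tfrac{2b}{l}<1$ is not an integer, and a ``difference'' part independent of the outer index; collapsing the remaining inner sum using $\tfrac{bt_\gamma}{l}\in\mathbb Z$ gives
\begin{equation*}
\langle e^\phi_{\gamma c^{a}},e^\phi_{\gamma c^{a'}}\rangle
=\frac{2l^{2}|\phi(e)|^{2}}{t_\gamma|S|^{2}}\,\cos\!\Big(\frac{2(a-a')b\pi}{l}\Big).
\end{equation*}
This is obtained by the same trigonometric manipulation used in the proofs of Propositions~\ref{dimen T} and~\ref{nonzeroT} (and the case $a=a'$ recovers the norm found there). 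Since $w_a=w_{a'}$ exactly when $t_\gamma\mid a-a'$, we conclude: a two-dimensional orbit contributes an o-basis if and only if there is an integer $d$ with $1\le d\le t_\gamma-1$ and $\cos\!\big(\tfrac{2db\pi}{l}\big)=0$, i.e.\ with $\tfrac{4db}{l}$ an \emph{odd} integer.

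It remains to translate this into $\nu_2$. Write $l=2^{e}l'$ and $b=2^{f}b'$ with $l',b'$ odd. Then $\tfrac{4db}{l}$ is an odd integer precisely when $\tfrac{l'}{\gcd(l',b')}\mid d$ and $\nu_2(d)=e-f-2$, so such $d$ exist at all if and only if $e\ge f+2$, i.e.\ if and only if $\nu_2(\tfrac{2b}{l})=1+f-e<0$. For the ``if'' direction, assume $\nu_2(\tfrac{2b}{l})<0$ and put $d_0:=2^{\,e-f-2}\,\tfrac{l'}{\gcd(l',b')}$; then $\tfrac{4d_0b}{l}=\tfrac{b'}{\gcd(l',b')}$ is an odd integer, and on a two-dimensional orbit $|C\cap G_\gamma|$ divides $\gcd(b,l)$ --- it divides $l$, and it divides $b$ because $\tfrac{bt_\gamma}{l}\in\mathbb Z$ --- so $t_\gamma=\tfrac{l}{|C\cap G_\gamma|}\ge\tfrac{l}{\gcd(b,l)}=4d_0>d_0\ge 1$, whence $d_0\in\{1,\dots,t_\gamma-1\}$ and every such orbit has an o-basis; by Proposition~\ref{separated lemma}, $V_\phi(G)$ has an o-basis. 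For the ``only if'' direction, assume $\nu_2(\tfrac{2b}{l})\ge 0$, so no integer $d$ satisfies $\nu_2(d)=e-f-2$. Taking $\alpha=(1,2,2,\dots,2)$, whose stabilizer is $\{e\}$ because the Cayley embedding of $G$ acts regularly on $\{1,\dots,4n\}$ (as already used in Theorem~\ref{t26}), gives an orbit with $G_\gamma=\{e\}$, hence $t_\gamma=l$ and $\dim V^\phi_\gamma(C)=2$, but now no two of the $w_a$ are orthogonal, so $V^\phi_\gamma(C)$ --- and therefore $V_\phi(G)$ --- has no o-basis.

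The step I expect to be the main obstacle is coupling the trigonometric identity with the range bound: one must verify that the single witness $d_0$ really lies in $\{1,\dots,t_\gamma-1\}$ for \emph{every} two-dimensional orbit, not only for the orbit with trivial stabilizer, and this is exactly where the divisibility $|C\cap G_\gamma|\mid\gcd(b,l)$ forced by $\tfrac{bt_\gamma}{l}\in\mathbb Z$ is used; the cosine computation itself is routine given the pattern already established in Propositions~\ref{dimen T} and~\ref{nonzeroT}.
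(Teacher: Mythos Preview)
Your proof is correct and follows essentially the same route as the paper: reduce via Proposition~\ref{separated lemma} to the pieces $V^\phi_\gamma(C)$, invoke Proposition~\ref{dimen T} to see each has dimension $0$ or $2$, compute that on a two-dimensional piece $\langle e^\phi_{\gamma\sigma_i},e^\phi_{\gamma\sigma_j}\rangle=0\iff\cos\!\big((i-j)\tfrac{2b}{l}\pi\big)=0$, and translate this into the $2$-adic condition $\nu_2(\tfrac{2b}{l})<0$. Your treatment is, if anything, slightly more careful than the paper's --- you explicitly justify why any o-basis of $V^\phi_\gamma(C)$ must consist of the $w_a$'s, you exhibit the witness orbit $\alpha=(1,2,\dots,2)$ for the ``only if'' direction, and your orthogonal-pair witness $d_0=2^{e-f-2}\tfrac{l'}{\gcd(l',b')}$ differs from the paper's choice $i_0-j_0=2^{k-1}$ only by a harmless odd factor.
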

\begin{proof}  By Proposition \ref{separated lemma}, it is enough to focus on $V^{\phi}_{\gamma}(C)$.  Also, in the proof of Proposition \ref{prop dim}, we have $V_\gamma^\phi(C)  =  \langle e^{\phi}_{\gamma \sigma_j} | j=1,2,...,t_\gamma \rangle$, where $\sigma_j=r^{jp^t}$ and $t_{\gamma}=\frac{\mid C\mid}{\mid C\cap G_\gamma  \mid}$.  Again, by (\ref{inner C}) and the character tables, we compute that, for $1\leq i, j \leq t_\gamma$,
$$ \begin{array}{ccl}
      \langle e^{\phi}_{\gamma \sigma_i}, e^{\phi}_{\gamma \sigma_j} \rangle =0& \Longleftrightarrow & \sum_{g\in C}\phi(g\sigma_i)\overline{\phi(g\sigma_j)}=0 \\
      & \Longleftrightarrow & \sum^{l-1}_{k=0}\phi(r^{(k+i)p^{t}})\overline{\phi(r^{(k+j)p^{t}})}=0\\
&\Longleftrightarrow&\sum^{l-1}_{k=0}2\cos((k+i)\frac{2b}{l}\pi)\cos((k+j)\frac{2b}{l}\pi)=0\\
&\Longleftrightarrow&l\cos((i-j)\frac{2b}{l}\pi)=0.
   \end{array}
 $$
By Proposition \ref{dimen T}, $\dim(V^{\phi}_{\gamma}(C))=2$ for each $\gamma$ such that $\frac{bt_\gamma}{l} \in \mathbb{Z}$.  So, if $V_{\phi}(G)$ contains an o-basis, then there exist $\gamma$ and distinct $1\leq i, j \leq t_\gamma$ such that $\cos((i-j)\frac{2b}{l}\pi)=0$, which clearly implies that $\nu_2(\frac{2b}{l})<0$.  On the other hand, suppose $\nu_2(\frac{2b}{l})=-k$, for some $k \in \mathbb{N}$.  Then $\frac{b}{l}=\frac{m}{2^{k+1}}$ for some odd integer $m$.  Since the existence of o-basis depend on  $\gamma$ for which $\frac{bt_\gamma}{l} \in \mathbb{Z}$, $2^{k+1}$ is always a divisor of $t_\gamma$.  Thus, we can choose $i_0=2^{k-1}+1$ and $j_0=1$ so that $\cos((i_0-j_0)\frac{2b}{l}\pi)=0$.
By Proposition \ref{nonzeroT}, $e^\phi_{\gamma\sigma_{i_0}}$ and $e^\phi_{\gamma\sigma_{j_0}}$ are non zero and hence, by the above fact, $\{ e^\phi_{\gamma r^{(2^{k-1}+1)p^t}}, e^\phi_{\gamma r^{p^t}}\}$ forms an o-basis for $V^{\phi}_{\gamma}(C)$.
\end{proof}
\section{Dihedral Group  $D_{m}$}
We first collect some facts about the Brauer characters of the Dihedral groups $D_{2n}$ from \cite{HK}.   We follow the notions of \cite{HK} in this section.  A presentation of the Dihedral groups $D_{m}$ having order $2m$, is given by $D_m=<r,s \mid r^m=s^2=1, srs=r^{-1}>$.   The ordinary character table of $D_m$ is
\begin{center}
\begin{tabular}{ccc}
  \hline\hline
  Characters & $r^k$ & $sr^k$ \\
\hline
  $\psi_0$ & $1$ & $1$ \\
  $\psi_1$ & $1$ & $-1$ \\
  $\psi_2$ & $(-1)^k$ & $(-1)^k$ \\
  $\psi_2$ & $(-1)^k$ & $(-1)^{k+1}$ \\
\hline
&&\\

  $\chi_h$ & $2\cos \frac{2\pi k h}{m}$ & 0 \\
 ($1\leq h < \frac{m}{2}$)&&\\
  \hline\hline
\end{tabular}
\end{center}
\begin{center}
\textbf{Table II}: The character table of $D_m$.
\end{center}

We write $m=lp^t$, where $l$ is not divisible by prime number $p$, as before.  The set of all $p$-regular elements of $D_m$ are
$$ \widehat{D_m}=\left\{
                   \begin{array}{ll}
                     \{ r^{jp^t}, sr^k \mid 0\leq j <l, 0 \leq k < \frac{m}{2}\}, & \hbox{$p\neq 2$;} \\
                      \{ r^{jp^t} \mid 0\leq j <l \}, & \hbox{$p= 2$.}
                   \end{array}
                 \right.
 $$
The complete list of irreducible Brauer characters of $D_{m}$ is, \cite{HK},
 $$\hbox{$\hat{\psi}_j$ ($0\leq j < \epsilon$),  $\hat{\chi}_h$  ($1 \leq h < \frac{l}{2}$), }$$
where $\hat{\psi}_j=\psi_j\mid_{\widehat{D_m}}$,   $\hat{\chi}_j=\chi_h\mid_{\widehat{D_m}}$ and $$ \epsilon =\left\{  \begin{array}{ll}
                      4, & \hbox{$l$ even, $p\neq 2$;} \\
                      2, & \hbox{$l$ odd, $p\neq 2$;} \\
                      1, & \hbox{$p=2$.}
                    \end{array}
                  \right.  $$  The necessary and sufficient condition for the existence of an o-basis for Brauer characters of dimension one is provided in \cite{HK}.  Precisely, for $\phi=\hat{\psi}_j, \hat{\psi'}_j$, where $0 \leq j<\epsilon$,
the space $V_\phi(D_{m})$ has an o-basis if and only if $\dim V=1$ or $p = 2$ or  $m$ is not divisible by $p$.

The necessary and sufficient condition for the existence of an o-basis for Brauer character of dimension two for $D_{m}$ can be found in \cite{HK}.  But it can also be obtained by very similar method applied on $T_{4n}$ as we presented in $\S4$.   This is because $\phi$ has a cyclic support for each $\phi=\hat{\chi}_h$, where $0 \leq h < \frac{l}{2}$, and all values in the character tables of both groups  are consistent on $C=<r^k>$.  Thus,  by changing $m$ to $2n$ and $h$ to $b$, each step of the computation for dimensions of $V_{\gamma}^{\phi}(D_m)$ and the condition for the existence becomes the same.  This yields

\begin{thm}Let $G=D_{m}$, where $m=lp^t$ with $l$ an integer not divisible by $p$ and let $\phi=\hat{\chi}_h$, where $ 1 \leq h< \frac{l}{2}$.  Then,   $V_{\phi}(G)$ has an o-basis if and only if $\nu_2(\frac{2h}{l})<0$.  Also, for each $\sigma \in C$,  $ e^{\phi}_{\gamma\sigma}\ \neq 0 \hbox{  if and only if  }\frac{ht_\gamma}{l}\in\mathbb{Z}.$
\end{thm}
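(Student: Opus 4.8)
The plan is to carry over the argument of \S4 essentially verbatim. First I would observe that $\phi = \hat\chi_h$ has cyclic support: the ordinary character $\chi_h$ vanishes on every reflection $sr^k$, and the only rotations surviving restriction to $\widehat{D_m}$ are the powers of $r^{p^t}$, so $\phi$ vanishes on $D_m\setminus C$ with $C = \langle r^{p^t}\rangle$ a cyclic subgroup of order $l$. Hence $D_m$ has $C$ as a cyclic group support on $\phi$, the inner-product formula (\ref{inner C}) applies, and by Proposition \ref{separated lemma} it suffices to decide, for each $\gamma\in\Delta$, whether $V_\gamma^\phi(C)$ has an o-basis.

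Next I would fix the arithmetic of the stabilizers exactly as in \S4: since $C$ is cyclic of order $l$, the intersection $C\cap G_\gamma$ is again cyclic, say $C\cap G_\gamma = \langle r^{t_\gamma p^t}\rangle$ with $t_\gamma = |C|/|C\cap G_\gamma| = l/|C\cap G_\gamma|$, so that (by Proposition \ref{prop dim}) $V_\gamma^\phi(C) = \langle e^\phi_{\gamma\sigma_j}\mid 1\le j\le t_\gamma\rangle$ with $\sigma_j = r^{jp^t}$. The crucial point is that the entries of the character table of $D_m$ that enter the computation, namely $\chi_h(r^k) = 2\cos(2\pi kh/m)$, become after the substitution $m\mapsto 2n$, $h\mapsto b$ precisely the values $\psi_b(r^k) = 2\cos(kb\pi/n)$ used for $T_{4n}$, while both characters vanish off the cyclic part. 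Consequently formula (\ref{inner C}) gives, for $1\le i,j\le t_\gamma$,
\[
\langle e^\phi_{\gamma\sigma_i},e^\phi_{\gamma\sigma_j}\rangle = 0 \iff l\cos\!\big((i-j)\frac{2h}{l}\pi\big) = 0,
\]
and the computation behind Propositions \ref{dimen T} and \ref{nonzeroT} transfers word for word to give $\dim V_\gamma^\phi(C) = 2$ when $\frac{ht_\gamma}{l}\in\mathbb{Z}$ and $\dim V_\gamma^\phi(C) = 0$ (equivalently $e^\phi_{\gamma\sigma} = 0$ for every $\sigma\in C$) when $\frac{ht_\gamma}{l}\notin\mathbb{Z}$; this already settles the second assertion of the theorem.

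For the equivalence I would repeat the argument of the final theorem of \S4. If $V_\phi(G)$ has an o-basis, then some two-dimensional $V_\gamma^\phi(C)$, necessarily with $\frac{ht_\gamma}{l}\in\mathbb{Z}$, has one, so there are distinct $i,j$ with $\cos((i-j)\frac{2h}{l}\pi) = 0$, i.e. $(i-j)\frac{2h}{l}\in\frac{1}{2}+\mathbb{Z}$, which forces $\nu_2(\frac{2h}{l}) < 0$. Conversely, if $\nu_2(\frac{2h}{l}) = -k$ with $k\ge 1$, write $\frac{h}{l} = \frac{m'}{2^{k+1}}$ with $m'$ odd; for every $\gamma$ contributing a nonzero orbital space we have $\frac{ht_\gamma}{l}\in\mathbb{Z}$, hence $2^{k+1}\mid t_\gamma$, so $i_0 = 2^{k-1}+1$ and $j_0 = 1$ satisfy $1\le j_0 < i_0\le t_\gamma$ and $\cos((i_0-j_0)\frac{2h}{l}\pi) = 0$; by the second assertion $e^\phi_{\gamma\sigma_{i_0}}$ and $e^\phi_{\gamma\sigma_{j_0}}$ are nonzero, so $\{e^\phi_{\gamma r^{(2^{k-1}+1)p^t}}, e^\phi_{\gamma r^{p^t}}\}$ is an o-basis of $V_\gamma^\phi(C)$, and Proposition \ref{separated lemma} completes the proof.

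The whole argument is bookkeeping, so the only point that genuinely needs attention --- and the only place the ``same method'' claim could quietly break --- is checking that under $m\mapsto 2n$, $h\mapsto b$ the two character tables really do agree on the cyclic part and that the orbit/stabilizer data of \S3 transfer unchanged; in particular one should note that $C\cap G_\gamma$ is automatically cyclic of the stated form simply because it is a subgroup of the cyclic group $C$. Once that identification is confirmed, every displayed computation of \S4 can be quoted line for line, with $m$ in place of $2n$ and $h$ in place of $b$.
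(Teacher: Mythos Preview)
Your proposal is correct and follows exactly the approach the paper itself takes: the paper's proof of this theorem is nothing more than the observation that, under the substitution $m\leftrightarrow 2n$, $h\leftrightarrow b$, the values of $\chi_h$ on the cyclic part of $D_m$ coincide with those of $\psi_b$ on the cyclic part of $T_{4n}$, so that every computation in \S4 (Propositions \ref{dimen T}, \ref{nonzeroT}, and the final theorem) transfers verbatim. Your write-up is simply a careful unpacking of that transfer, including the check that $C\cap G_\gamma$ is cyclic of the required form because $C$ is cyclic; there is nothing to add.
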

\section{Irreducible Brauer character of $SD_{8n}$}
The presentation for $SD_{8n}$ for $n\geq2$ is given by
$  SD_{8n}=<a,b \mid a^{4n}=b^{2}=e,bab=a^{2n-1}>.$
All $8n$ elements of $SD_{8n}$ may be given by
$$SD_{8n}=\{  e,a,a^{2},...,a^{4n-1},b,ba,ba^{2},...,ba^{4n-1}\}.$$
The embedding of $SD_{8n}$ into the symmetric group $S_{4n}$ is given by
$T(a)(t) := \overline{t + 1}$ and $T(b)(t) := \overline{(2n -1)t}$, where $\overline{m}$ is the remainder of $m$ divided by $4n$.
We write $4n=lp^{t}$ with prime $p$ and integer $l$ not divisible by $p$ and denote by $\widehat{SD_{8n}}$ the set of all $p$-regular elements of $SD_{8n}$.  It is not hard to see that
$$ \widehat{SD_{8n}}=\left\{
                       \begin{array}{ll}
                         \{a^{jp^{t}},ba^{k} \mid 0\leq j<l;0\leq k <4n \}, & \hbox{if $p\neq2$;} \\
                         \{ a^{jp^{t}} \mid 0\leq j < l\}, & \hbox{if $p=2$.}
                       \end{array}
                     \right.
 $$

\begin{prop}\label{conjuagacy} The $p$-regular classes of $SD_{8n}$, $n\geq2$ and $4n=lp^{t}$, are as follows;\\
\textbf{Case 1:} $p$ is odd prime.
\begin{itemize}
  \item If $n$ is even (i.e. $\frac{l}{8} \in \mathbb{Z}$), then there are $\frac{l}{2}+3$ $p$-regular classes. Precisely,
       \begin{itemize}
         \item 2 classes of size one being $\{ e\}$ and $\{a^{\frac{l}{2}p^{t}}\}$,
         \item $\frac{l}{4}-1$ classes of size two being $[a^{jp^{t}}]=\{ a^{jp^{t}},a^{(l-j)p^{t}}\};j \in \{ 2,4,6,...,\frac{l}{2}-2 \} $,
         \item $\frac{l}{8}$ classes of size two being $[a^{jp^{t}}]=\{ a^{jp^{t}},a^{(\frac{l}{2}-j)p^{t}}\};j \in \{ 1,3,5,...,\frac{l}{4}-1 \} $,
         \item $\frac{l}{8}$ classes of size two being $[a^{jp^{t}}]=\{ a^{jp^{t}},a^{(\frac{3l}{2}-j)p^{t}}\};j \in \{ \frac{l}{2}+1,\frac{l}{2}+3,...,\frac{l}{2}+\frac{l}{4}-1 \} $ and
         \item 2 classes of size $2n$ being $[b]=\{ba^{2i} \mid i=0,1,2,...,2n-1\}$ and $[ba]=\{ba^{2i+1} \mid i=0,1,2,...,2n-1\}$.
       \end{itemize}
 \item If $n$ is odd (i.e. $\frac{l}{4}$ is odd), then there are $\frac{l}{2}+6$ $p$-regular classes. Precisely,
       \begin{itemize}
         \item 4 classes of size one being $\{ e\}$, $\{a^{\frac{l}{4}p^{t}}\}$, $\{a^{\frac{l}{2}p^{t}}\}$ and $\{a^{\frac{3l}{4}p^{t}}\}$,
         \item $\frac{l}{4}-1$ classes of size two being $[a^{jp^{t}}]=\{ a^{jp^{t}},a^{(l-j)p^{t}}\};j \in \{ 2,4,6,...,\frac{l}{2}-2 \} $,
         \item $\frac{l-4}{8}$ classes of size two being $[a^{jp^{t}}]=\{ a^{jp^{t}},a^{(\frac{l}{2}-j)p^{t}}\};j \in \{ 1,3,5,...,\frac{l}{4}-2 \} $,
         \item $\frac{l-4}{8}$ classes of size two being $[a^{jp^{t}}]=\{ a^{jp^{t}},a^{(\frac{3l}{2}-j)p^{t}}\};j \in \{ \frac{l}{2}+1,\frac{l}{2}+3,...,\frac{l}{2}+\frac{l}{4}-2 \} $ and
        \item 4 classes of size $n$ being $[b]=\{ba^{4i} \mid i=0,1,2,...,n-1\}$, $[ba]=\{ba^{4i+1} \mid i=0,1,2,...,n-1\}$, $[ba^{2}]=\{ba^{4i+2} \mid i=0,1,2,...,n-1\}$ and $[ba^{3}]=\{ba^{4i+3} \mid i=0,1,2,...,n-1\}$.
       \end{itemize}
\end{itemize}
\textbf{Case 2:} $p=2$.
There are $\frac{l+1}{2}$ p-regular classes.  Precisely, there is 1 class of size one, $\{e\}$, and there are $\frac{l-1}{2}$ classes of size two, $\{a^{jp^{t}}, a^{(l-j)p^{t}} \}$; $1\leq j \leq \frac{l-1}{2}$.

\end{prop}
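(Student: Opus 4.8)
The plan is to determine the conjugacy structure of the full group $SD_{8n}$ from the relation $bab=a^{2n-1}$ (equivalently $a^i b = b a^{i(2n-1)}$, since $b^{-1}=b$) and then intersect with the set $\widehat{SD_{8n}}$ recorded above and count. The three conjugation identities I will use are $b a^k b^{-1}=a^{k(2n-1)}$, so $a^k\sim a^{k(2n-1)}$; $a^i (ba^j) a^{-i}=ba^{\,j+2i(n-1)}$; and $b(ba^j)b^{-1}=ba^{\,j(2n-1)}$. From the second identity the $\langle a\rangle$-conjugacy orbit of a reflection $ba^j$ is $\{\,ba^{j'}\mid j'\equiv j \pmod d\,\}$ with $d=\gcd(2(n-1),4n)$, and $\gcd(2(n-1),4n)=2\gcd(n-1,2)$, which equals $2$ when $n$ is even and $4$ when $n$ is odd; in both cases the third identity shows conjugation by $b$ keeps the exponent in the same residue class mod $d$. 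Hence the reflections split into $d$ classes of equal size $4n/d$: two classes of size $2n$ when $n$ is even, four classes of size $n$ when $n$ is odd. I will also record the elementary computations $(ba^j)^2=a^{2nj}$, so each $ba^j$ has order $2$ or $4$, and $a^{jp^t}$ has order dividing $l$; with $4n=lp^t$ these recover $\widehat{SD_{8n}}$ and show that every reflection is $p$-regular precisely when $p$ is odd, while for $p=2$ no reflection is $p$-regular.

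For $p=2$: since $4\mid 4n$ and $l$ is odd we have $t\ge 2$, so the $p$-regular elements are exactly the powers $a^{jp^t}$, $0\le j<l$, each exponent $jp^t$ is even, and the first identity collapses to $a^{jp^t}\sim a^{-jp^t}=a^{(l-j)p^t}$. Since $l$ is odd the only fixed point is $j=0$, giving the class $\{e\}$ together with $(l-1)/2$ classes of size two, i.e. $(l+1)/2$ classes altogether, as claimed.

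For $p$ odd: $p^t$ is odd, hence $4\mid l$, with $8\mid l$ exactly when $n$ is even and $l/4$ odd exactly when $n$ is odd; this is what forces the two sub-cases to split as stated, and the reflection classes are already handled. For the powers I split according to the parity of $j$: from $a^{jp^t}\sim a^{j(2n-1)p^t}$, and $j(2n-1)p^t\equiv -jp^t\pmod{4n}$ when $j$ is even while $j(2n-1)p^t\equiv 2n-jp^t\pmod{4n}$ when $j$ is odd (because then $2n\cdot jp^t\equiv 2n$), one gets $a^{jp^t}\sim a^{(l-j)p^t}$ for $j$ even and $a^{jp^t}\sim a^{(l/2-j)p^t}$ for $j$ odd (using $2n=\tfrac l2 p^t$ and $lp^t=4n$), where in the odd case $l/2-j$ is read modulo $l$, becoming $3l/2-j$ when negative. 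Solving for the fixed points of these two involutions on $\mathbb Z/l$ yields the singleton classes $\{e\}$ and $\{a^{(l/2)p^t}\}$ always, plus $\{a^{(l/4)p^t}\}$ and $\{a^{(3l/4)p^t}\}$ when $l/4$ is odd; every remaining residue falls into a two-element orbit. Partitioning those orbits according to which subinterval of $[0,l)$ a representative lies in, and checking that each residue occurs exactly once, produces the three remaining families with the index sets shown; summing the cardinalities gives $\tfrac l2+3$ when $n$ is even and $\tfrac l2+6$ when $n$ is odd.

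I expect the only genuinely delicate part to be this last piece of bookkeeping in the odd-$p$ case: one must verify that the involution $j\mapsto l/2-j$ on odd residues mod $l$ has no fixed point when $8\mid l$ but exactly the two fixed points $l/4$ and $3l/4$ when $l/4$ is odd, and then organize the two-element orbits into the two families indexed by $j\in\{1,3,\dots\}$ below $l/4$ and by $j\in\{l/2+1,l/2+3,\dots\}$ so that each has the claimed size $l/8$ (resp. $(l-4)/8$) and together they exhaust all the odd exponents; everything else follows mechanically from the three conjugation identities and the order computations.
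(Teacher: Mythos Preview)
Your proposal is correct and follows the same route as the paper, which records only ``This is a direct calculation.'' Your three conjugation identities, the parity split on $j$ governing whether $a^{jp^t}\sim a^{(l-j)p^t}$ or $a^{jp^t}\sim a^{(l/2-j)p^t}$, and the computation $d=\gcd(2(n-1),4n)=2\gcd(n-1,2)$ for the reflection classes are exactly the ingredients such a direct calculation requires, so you have simply made explicit what the paper leaves to the reader.
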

\begin{proof} This is a direct calculation.
\end{proof}

The ordinary irreducible character of $SD_{8n}$ are given by (see \cite{HORO})
\begin{center}
    \textbf{Table I} The character table for $SD_{8n}$, where $n$ is even.
\end{center}
\begin{center}
\begin{tabular}{|c|c|c|c|c|}
  \hline
  \hline
   Conjugacy classes,& $[a^{r}];$ & $[a^{r}];$ & $[b]$ & $[ba]$ \\
   Characters& $r \in C_{1}$ & $r \in C^{\dag}_{odd}$ &  &  \\
   \hline
   \hline
  $\chi_{0}$ & 1 & 1 & 1 & 1 \\
  \hline
 $\chi_{1}$ & 1 & 1 & -1 & -1 \\
  \hline
 $\chi_{2}$ & 1 & -1 & 1 & -1 \\
  \hline
 $\chi_{3}$ & 1 & -1 & -1 & 1 \\
  \hline
  &&&&\\
  $\psi_{h}$, where & $2\cos(\frac{hr\pi}{2n})$ & $2\cos(\frac{hr\pi}{2n})$ & 0 & 0 \\
  $h \in  C^{\dag}_{even}$ &  &  &  &  \\
  \hline
   &&&&\\
   $\psi_{h}$, where & $2\cos(\frac{hr\pi}{2n})$ & $2i\sin(\frac{hr\pi}{2n})$ & 0 & 0 \\
  $h \in  C^{\dag}_{odd}$ &  &  &  &  \\
  \hline
  \hline
\end{tabular}\\
\end{center}
\begin{center}
    \textbf{Table II} The character table for $SD_{8n}$, where $n$ is odd.
\end{center}
\begin{center}
\begin{tabular}{|c|c|c|c|c|c|c|}
  \hline
  \hline
   Conjugacy classes,& $[a^{r}];$ & $[a^{r}];$ & $[b]$ & $[ba]$ & $[ba^{2}]$ & $[ba^{3}]$ \\
   Characters& $r \in C_{1}$ & $r \in C_{2,3}^{odd}$ &  &  && \\
   \hline
   \hline
  $\chi'_{0}$ & 1 & 1 & 1 & 1 &1&1\\
  \hline
 $\chi'_{1}$ & 1 & 1 & -1 & -1 &-1&-1 \\
  \hline
 $\chi'_{2}$ & 1 & -1 & 1 & -1 &1&-1\\
  \hline
 $\chi'_{3}$ & 1 & -1 & -1 & 1 &-1&1\\
  \hline
  $\chi'_{4}$ & $(-1)^{\frac{r}{2}}$ & $i^{r}$ & 1 & $i$ &-1&$-i$\\
  \hline
 $\chi'_{5}$ & $(-1)^{\frac{r}{2}}$& $i^{r}$ & -1 & $-i$ &1&$i$ \\
  \hline
 $\chi'_{6}$ &$(-1)^{\frac{r}{2}}$ & $(-i)^{r}$ & 1 & $-i$ &-1&$i$\\
  \hline
 $\chi'_{7}$ &$(-1)^{\frac{r}{2}}$ & $(-i)^{r}$ & -1 & $i$ &1&$-i$\\
  \hline
  &&&&&&\\
  $\psi'_{h}$, where & $2\cos(\frac{hr\pi}{2n})$ & $2\cos(\frac{hr\pi}{2n})$ & 0 & 0 &0&0\\
  $h \in C^{\dag}_{even}$ &  &  &  &  &&\\
  \hline
   &&&&&&\\
   $\psi'_{h}$, where & $2\cos(\frac{hr\pi}{2n})$ & $2i\sin(\frac{hr\pi}{2n})$ & 0 & 0 &0&0\\
  $h\in C^{\dag}_{odd}$ &  &  &  &  &&\\
  \hline
  \hline
\end{tabular}\\
\end{center}
where $C_{1}=\{0,2,4,...,2n \}$, $C^{\dag}_{even}:=C_{1}\setminus\{ 0,2n\}$, $C^{odd}_{2,3}=\{1,3,5,...,n,2n+1,2n+3,2n+5,...,3n\}$, $C^{\dag}_{odd}=\{1,3,5,...,n-1,2n+1,2n+3,2n+5,...,3n-1 \}$.\\

For each $k$ and $h$, put $\widehat{\chi}_{k}=\chi_{k}\mid_{\widehat{SD}_{8n}}$, $\widehat{\chi'}_{k}=\chi'_{k}\mid_{\widehat{SD}_{8n}}$ and $\widehat{\psi}_{k}=\psi_{k}\mid_{\widehat{SD}_{8n}}$, $\widehat{\psi'}_{k}=\psi'_{k}\mid_{\widehat{SD}_{8n}}$.  Moreover, for odd prime $p$ and $4n=lp^{t}$ such that $l$ is not divisible by $p$, we define $E:=\{2,4,6,...,\frac{l}{2}-2 \}$, $O^{\epsilon}_{1}:=\{1,3,5,...,\frac{l}{4}-\epsilon \}$, $O^{\epsilon}_{2}:=\{\frac{l}{2}+1, \frac{l}{2}+3,...,\frac{l}{2}+\frac{l}{4}-\epsilon \}$, where $\epsilon=1$ if $n$ is even and $\epsilon=2$ if $n$ is odd.
\begin{prop}\label{irreducible and distinct Brauer SD} Let IBr$(SD_{8n})$ be the set of all distinct irreducible Brauer characters of $SD_{8n}$.  Then, \\ $IBr(SD_{8n})=\left\{
                                            \begin{array}{ll}
                                              \{\widehat{\chi}_{k}, \widehat{\psi}_{jp^{t}} \mid 0\leq k \leq3, j \in E\cup O_{1}^{1}\cup O^{1}_{2} \}, & \hbox{if $p\neq2$ and $n$ is even;} \\
                                              \{\widehat{\chi'}_{k}, \widehat{\psi'}_{jp^{t}} \mid 0\leq k \leq7, j \in E\cup O_{1}^{2}\cup O^{2}_{2} \}, & \hbox{if $p\neq2$ and $n$ is odd;} \\
                                              \{\widehat{\chi}_{0}, \widehat{\psi}_{jp^{t}} \mid 0\leq j \leq \frac{l-1}{2} \}, & \hbox{if $p=2$ and $n$ is even.}\\
\{\widehat{\chi'}_{0}, \widehat{\psi'}_{jp^{t}} \mid 0\leq j \leq \frac{l-1}{2}  \}, & \hbox{if $p=2$ and $n$ is odd.}
                                            \end{array}
                                          \right.
$

\end{prop}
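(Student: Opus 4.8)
The plan is to combine the Fong--Swan theorem with a direct case analysis on the columns of the ordinary character tables of $SD_{8n}$, exactly as was done for $T_{4n}$. First I would invoke the general principles: the restriction to $\widehat{SD_{8n}}$ of any ordinary character is a Brauer character, the number of irreducible Brauer characters equals the number of $p$-regular classes (which has been counted in Proposition~\ref{conjuagacy}), and, since $SD_{8n}$ is solvable, by Fong--Swan every irreducible Brauer character is the restriction of some ordinary irreducible character. Hence the complete list of IBr$(SD_{8n})$ is contained among the restrictions $\widehat{\chi}_k$ (or $\widehat{\chi'}_k$) and $\widehat{\psi}_h$ (or $\widehat{\psi'}_h$), and it suffices to determine which of these restrictions are irreducible and to sort out coincidences among them, then check the final count against Proposition~\ref{conjuagacy}.

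Next I would treat the linear characters. The $\widehat{\chi}_k$ ($0\le k\le 3$) for even $n$, respectively $\widehat{\chi'}_k$ ($0\le k\le 7$) for odd $n$, are linear, hence automatically irreducible as Brauer characters; their mutual distinctness on $\widehat{SD_{8n}}$ is read off directly from the character tables (they already differ on the surviving columns $[b],[ba],[ba^2],[ba^3]$ and, for the $\chi'_4,\dots,\chi'_7$ family, on an $[a^r]$-column with $r\in C_1$ surviving, which exists since $l>$ a small bound forces such an $r$ to be $p$-regular). When $p=2$ only $\widehat{\chi}_0$ (resp. $\widehat{\chi'}_0$) survives as the unique linear Brauer character, since $\widehat{SD_{8n}}=\langle a^{p^t}\rangle$ and all nontrivial linear characters become trivial on it.

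For the two-dimensional characters $\widehat{\psi}_{h}$, the key observation (again parallel to the $T_{4n}$ argument) is that $SD_{8n}$ has cyclic support on each $\psi_h$: the character vanishes on all $[ba^k]$, so only the $\langle a\rangle$-columns matter, and $\widehat{\psi}_{jp^t}$ for $j$ in the stated index set $E\cup O_1^\epsilon\cup O_2^\epsilon$ ranges over precisely the $p$-regular exponents of $a$. I would show irreducibility by supposing $\widehat{\psi}_{jp^t}=\widehat{\chi}_k+\widehat{\chi}_{k'}$ and evaluating at a suitable power $a^{mp^t}\in\widehat{SD_{8n}}$ to force $2\cos(\tfrac{jp^t m\pi}{2n})=\pm2$ for every relevant $m$, which is impossible for $j$ strictly inside the allowed range; and distinctness $\widehat{\psi}_{jp^t}\neq\widehat{\psi}_{j'p^t}$ by evaluating at $a^{p^t}$ and using that $\cos(\tfrac{jp^t\pi}{2n})=\cos(\tfrac{j'p^t\pi}{2n})$ together with $j,j'$ both in $E\cup O_1^\epsilon\cup O_2^\epsilon$ forces $j=j'$ (the relevant sum/difference cannot be a multiple of $2\pi$). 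A mild subtlety is that for odd $n$ the functions $\psi_h$ with $h\in C^{\dag}_{odd}$ take value $2i\sin$ on the $C^{odd}_{2,3}$ columns rather than $2\cos$, so I must check that $\widehat{\psi'}_{jp^t}$ still agrees with the ``even'' shape on the $p$-regular columns that actually survive — this requires noting which exponents in $C^{odd}_{2,3}$ are $p$-regular and confirming the restriction lands where claimed. Finally, matching the produced list of irreducibles against the count $\tfrac{l}{2}+3$ (even $n$), $\tfrac{l}{2}+6$ (odd $n$), or $\tfrac{l+1}{2}$ ($p=2$) from Proposition~\ref{conjuagacy} confirms completeness.

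The main obstacle I expect is not any single hard estimate but the bookkeeping in the odd-$n$, odd-prime case: correctly identifying the $p$-regular representatives among $C_1$, $C^{odd}_{2,3}$, and the $[ba^i]$-classes, matching the index sets $E$, $O_1^\epsilon$, $O_2^\epsilon$ with the conjugacy-class list of Proposition~\ref{conjuagacy} (note that $[a^{jp^t}]$ there pairs $a^{jp^t}$ with $a^{(l-j)p^t}$, $a^{(\frac l2-j)p^t}$, or $a^{(\frac{3l}{2}-j)p^t}$ depending on the subrange of $j$, which is exactly why the index set is a union of three pieces), and verifying the eight linear characters $\widehat{\chi'}_0,\dots,\widehat{\chi'}_7$ remain distinct after restriction. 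Keeping the arithmetic of these index ranges straight, and ensuring the final tally exactly equals the number of $p$-regular classes, is where care is needed; the trigonometric irreducibility/distinctness arguments themselves are routine adaptations of the $T_{4n}$ proof.
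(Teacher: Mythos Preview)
Your proposal is correct and follows essentially the same approach as the paper: invoke Fong--Swan for the solvable group $SD_{8n}$, read off distinctness of the linear restrictions from the surviving columns, rule out reducibility of each $\widehat{\psi}_{jp^t}$ by evaluating at a suitable $a$-power to force an impossible cosine identity, and separate the two-dimensional restrictions by evaluating at $a^{p^t}$ and analyzing the resulting trigonometric equation case by case according to the parity of the indices. The paper chooses the specific test element $a^{2p^t}$ for irreducibility (so that the exponent is even and one lands in the $C_1$-column where the value is $2\cos$ regardless of the parity of $h$), and it handles distinctness by splitting into ``both even'' versus ``both odd'' index cases, obtaining respectively a product $\sin\cdot\sin$ or $\sin\cdot\cos$ equal to zero and ruling out each factor; otherwise the arguments coincide with what you outline.
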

\begin{proof} We first note that the restriction of a character of $SD_{8n}$ to $\widehat{SD}_{8n}$ is a Brauer character and the order of the set IBr$(SD_{8n})$ is the number of $p$-regular classes of $SD_{8n}$.   Also, since $SD_{8n}$ is solvable, by Fong-Swan theorem, any element in IBr$(SD_{8n})$ is the restriction of an ordinary irreducible character of $SD_{8n}$.

Each $\widehat{\chi}_{k}$'s and $\widehat{\chi'}_{k}$'s are obviously irreducible and clearly distinct, by the character tables above.  For characters of dimension two, $\widehat{\psi}_{jp^{t}}$ where $p$ is an odd prime and $n$ is even, we claim that those are irreducible.  We suppose for a contradiction that $\widehat{\psi}_{jp^{t}}=\widehat{\chi}_{i}+\widehat{\chi}_{k}$ for some $j \in E \cup O^{1}_{1}\cup O^{2}_{2}$ and $0\leq i,k \leq 3$.  Evaluating both sides at $a^{2p^{t}}$ yields that $$2\cos \frac{jp^{t}\cdot 2p^{t}\pi}{2n}=2.$$  That is $\cos \frac{4jp^{t}\pi}{l}=1$,  so $2j$ is a multiple of $l$.   However, since $2j<l $ for $j \in E\cup O^{1}_{1}$ and $l<2j<2l$ for $j \in O^{1}_{2}$, this is a contradiction.  We use similar arguments to show that all the remaining cases,  $\widehat{\psi}_{jp^{t}}$'s are irreducible.

Next, we aim to show that all elements in IBr$(SD_{8n})$ shown in the proposition are distinct.   For the case odd prime $p$ and even $n$, we suppose that $\widehat{\psi}_{jp^{t}}=\widehat{\psi}_{ip^{t}}$ for some $i,j \in E\cup O_{1}^{1}\cup O^{1}_{2}$.  It is clear (by the character table) that $i,j$ either both are even or both are odd.  If $i,j$ are even, we evaluate both sides at $a^{p^{t}}$ and then we get $$\sin\frac{p^{t}(i+j)\pi}{l}\sin\frac{p^{t}(j-i)\pi}{l}=0.$$  Since $gcd(l,p^{t})=1$ and $\frac{i+j}{l}$ and $\frac{j-i}{l}$ can not be positive integers for each $i,j \in E$, $i=j$.   If $i,j$ are odd, we evaluate both sides at $a^{p^{t}}$, and then we get $$\sin\frac{p^{t}(j-i)\pi}{l}\cos\frac{p^{t}(j+i)\pi}{l}=0.$$  Since $gcd(l,p^{t})=1$ and $\frac{i+j}{l}\neq \frac{l}{2},\frac{3l}{2}$  for $i,j \in O^{1}_{1}\cup O^{1}_{2}$ and $\frac{j-i}{l}$ can not be positive integer, $i=j$.  Again, similar arguments work for all the remaining cases.
\end{proof}

\section{existence of an o-basis  for the class of tensors using a Brauer
character of the $SD_{8n}$}
In the following theorem, we denote
$$
 \epsilon = \left\{\begin{array}{ll} 3~~ \hspace{0.8cm} \text{if} \hspace{0.8cm} p\neq 2,~~ n~~ \text{even} \\
7 ~~ \hspace{0.8cm} \text{if} \hspace{0.8cm} p\neq 2,~~n~~ \text{odd} \\
1 ~~\hspace{0.8cm} \text{if} \hspace{0.8cm}  p= 2 .\\
\end{array}\right.
$$

\begin{thm}
 \label{main1}
 Let $\dim V > 2$, $G=SD_{8n} $, $0 \leq j\leq \epsilon$,  and put $\phi=\hat{\chi}_j$ or $\hat{\chi'}_j $ . Then, space $V_\phi(G)$ has an o-basis if and only if $p=2$ or $4n$ is not divisible by $p$.
\end{thm}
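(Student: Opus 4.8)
The plan is to transplant the proof of Theorem~\ref{t26} from $T_{4n}$ to $SD_{8n}$, replacing $r$ by $a$ and the dicyclic relation by $bab=a^{2n-1}$; write $4n=lp^t$ with $p\nmid l$ and put $S:=\widehat{SD_{8n}}$. The ``if'' direction requires no new idea. If $p=2$ then $S=\langle a^{p^t}\rangle$ is a cyclic subgroup of $G$ and $\phi$ (one of $\widehat{\chi}_0,\widehat{\chi}_1$, resp.\ $\widehat{\chi'}_0,\widehat{\chi'}_1$) restricts to the constant function $1$ on $S$, so Proposition~\ref{prop111} gives an o-basis of $V_\phi(G)$. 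If $p\neq2$ and $4n$ is not divisible by $p$ then $t=0$, so $S=G$ and $\phi$ is an ordinary linear character of $G$; then $e^\phi_{\alpha\sigma}=\phi(\sigma)\,e^\phi_\alpha$ for all $\sigma$, so each $V^\phi_\alpha(G)$ is at most one-dimensional and $V_\phi(G)$ has an o-basis by Theorem~\ref{thm111}.

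For the ``only if'' direction I argue the contrapositive: assuming $p\neq2$ and $p\mid4n$ (so $t\ge1$), I show $V_\phi(G)$ has no o-basis. I first record three facts about $S$. From $bab=a^{2n-1}$ one gets $a^jb=ba^{(2n-1)j}$, hence $(ba^j)(ba^k)=a^{(2n-1)j+k}$; taking $j=0$ shows every power of $a$ lies in $S^2$, so $G\subseteq S^2$. Since $\gcd(2n-1,4n)=1$, the inverse of any $ba^k$ is again of the form $ba^{k'}$, and $(a^{jp^t})^{-1}=a^{(l-j)p^t}$; thus $S=S^{-1}$. Finally $a\notin S$, because $a$ has order $4n$, which is divisible by $p$. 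Using the hypothesis $\dim V>2$, I then choose a sequence with trivial stabiliser: in the coordinates of the embedding $T(a)(t)=\overline{t+1}$, $T(b)(t)=\overline{(2n-1)t}$, let $\alpha$ have value $1$ at coordinate $0$, value $2$ at coordinate $1$, and value $3$ at every remaining coordinate. Any $a^k\in G_\alpha$ fixes coordinate $0$ and hence is trivial; any $ba^k\in G_\alpha$ must carry coordinate $0$ (the unique coordinate of value $1$) to itself, forcing $k=0$ as $\gcd(2n-1,4n)=1$, so it equals $b$; but $T(b)$ carries coordinate $1$ to coordinate $2n-1\neq1$ (since $n\ge2$), which has value $3\neq2$, a contradiction. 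Hence $G_\alpha=\{e\}$.

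Now the argument of Theorem~\ref{t26} transfers directly. Applying Corollary~\ref{c2} with $\psi=\chi_j$ (resp.\ $\chi'_j$), $\sigma_1=\sigma$ and $\sigma_2=e$, its hypothesis $A=\{\mu\in S:e\in\sigma\mu^{-1}S\}\neq\emptyset$ holds for every $\sigma\in G$ because $A=S\cap S\sigma$ and $\sigma\in S^2=S^{-1}S$; hence $\langle e^\phi_{\alpha\sigma},e^\phi_\alpha\rangle\neq0$ for all $\sigma\in G$. Expanding $e^\phi_\alpha$ and $e^\phi_{\alpha a}$ in the orthonormal basis $\{e_\delta\}$ and using $S=S^{-1}$, $G_\alpha=G_{\alpha a}=\{e\}$ and $a\notin S$, the coefficient of $e_{\alpha a}$ is $0$ in $e^\phi_\alpha$ but $1/|S|\neq0$ in $e^\phi_{\alpha a}$, so $\{e^\phi_\alpha,e^\phi_{\alpha a}\}$ is linearly independent and $\dim V^\phi_\alpha(G)\ge2$. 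Since $S$ is a union of $p$-regular classes and $\phi$ is a class function, Lemma~\ref{l1} applies, so by Proposition~\ref{rebasis} any o-basis of $V^\phi_\alpha(G)$ could be translated to one containing $e^\phi_\alpha$; such a basis would contain a further vector $e^\phi_{\alpha\sigma}$ orthogonal to $e^\phi_\alpha$, contradicting the nonvanishing above. Thus $V^\phi_\alpha(G)$, and hence $V_\phi(G)$ by Theorem~\ref{thm111}, has no o-basis.

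The linear-algebraic core is essentially copied from Theorem~\ref{t26}; the genuinely new work is the two group identities $(ba^j)(ba^k)=a^{(2n-1)j+k}$ and $S=S^{-1}$, which rest on $\gcd(2n-1,4n)=1$ and on the nonstandard relation $bab=a^{2n-1}$, together with the construction of a coordinate sequence of trivial stabiliser. The last point is where $\dim V>2$ is actually used: the involution $b$ already fixes two coordinates of the $S_{4n}$-embedding, so the ``single distinguished coordinate'' sequence that suffices for $T_{4n}$ and $D_m$ does not work here and a third value is the convenient remedy. I expect these group-theoretic verifications and the coordinate bookkeeping to be the only real obstacles, everything else being a routine adaptation of the dicyclic case.
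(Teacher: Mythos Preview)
Your proposal is correct and follows essentially the same route as the paper's own proof: the ``if'' direction via Proposition~\ref{prop111} (for $p=2$) and the ordinary-linear-character observation (for $p\nmid 4n$); the ``only if'' direction by exhibiting an $\alpha$ with $G_\alpha=\{e\}$, verifying $S=S^{-1}$ and $G\subseteq S^2$ to invoke Corollary~\ref{c2}, establishing $\dim V^\phi_\alpha(G)\ge 2$ by the coefficient-of-$e_{\alpha a}$ argument, and finishing with Proposition~\ref{rebasis}. The only cosmetic difference is the choice of $\alpha$: the paper takes $\alpha=(1,2,\dots,2,3)$ (distinguished values in the first and last positions), whereas you place the values $1$ and $2$ at coordinates $0$ and $1$; both choices use three values, which is exactly where the hypothesis $\dim V>2$ enters, and both yield $G_\alpha=\{e\}$ by the same computation with $T(b)(t)=\overline{(2n-1)t}$.
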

\begin{proof}
 If $p=2$ then $\widehat{G}=\langle a^{p^t}\rangle$. Since $\widehat{G}$ is a subgroup of $G$ and $\phi$ is constant on $\widehat{G}$, by Proposition \ref{prop111},  $V_\phi(G)$ has an o-basis.
Assume $p\neq 2$ and $4n$ is not divisible by $p$. Then $\hat{G}=G$ and consequently, these characters will be ordinary linear characters. Thus $V_\phi(G)$ has an o-basis.\\

Conversely, suppose that $p\neq 2$ and $4n$ is divisible by $p$. We aim to show that $V_\phi(G)$ does not have an o-basis by showing that there exists $\alpha \in \Gamma^{4n}_k$ such that $V^{\phi}_{\alpha}(G)$ does not have an o-basis and then apply Theorem \ref{sum} to conclude the results.\\

Let $\alpha=(1,2,2,...,2,3)$.  Since $\dim V>2$ and $4n\geq 4$, $\alpha \in \Gamma^{4n}_{\dim V}$.  We also choose a representative $\Delta$ so that $\alpha \in \Delta$.  We observe that to fix $\alpha$, each $\sigma \in G$ must fix the first and the last position of $\alpha$.  It is clear that element of the form $a^{k}$ satisfying the condition is only $e$.  For elements of the form $ba^{k}$, they must satisfy $T(ba^{k})(1)=1$ and $T(ba^{k})(4n)=4n$.  By using $T(ba^{k})(t)=\overline{(2n-1)(k+t)}$, we conclude that $G_{\alpha}=\{ e\}$.  Since $\phi$ is a restriction of a linear character and $G_{\alpha}=\{ e \}$, by Corollary \ref{c2}, to show that $\langle e^{\phi}_{\alpha \sigma }, e^{\phi}_{\alpha } \rangle \neq 0$ for each $\sigma \in G$, it suffices to show that $A=\{\mu \in \widehat{G}| e \in \sigma\mu^{-1}\widehat{G}\}\neq\emptyset$.  This is simple because $p\neq 2$ and $4n$ is divisible by $p$, so $\widehat{G}=\{ba^{k}| 0\leq k < m \}=\widehat{G}^{-1}$ and then $A=\{\mu \in \widehat{G}| \sigma \in \widehat{G}\mu\}$.  Since $e \in \sigma\mu^{-1}\widehat{G}$ if and only if $\sigma \in \widehat{G}^{-1}\mu=\widehat{G}\mu$ and thus for arbitrary $0\leq k < 4n$, we have $a^{k}=ba^{0}ba^{k}\in \widehat{G}^{2}$ and $ab^{k}=a^{0}ba^{k}\in \widehat{G}^{2}$, so $G\subseteq \widehat{G}^{2}$.  That is $A\neq\emptyset$.  So,
\begin{equation}\label{t2111}
    \langle e^{\phi}_{\alpha \sigma }, e^{\phi}_{\alpha } \rangle \neq 0 \hbox{ for each } \sigma \in G.
\end{equation}

Next, to show that $\{ e^{\phi}_{\alpha a},  e^{\phi}_{\alpha}\}\subseteq V^{\phi}_{\alpha}(G)$ is a linearly independent set, we set $e^{\phi}_{\alpha}= \sum_{\delta}c_{\delta}e_{\delta}$ and $e^{\phi}_{\alpha a}= \sum_{\delta}d_{\delta}e_{\delta}$, as $\{e_{\delta}| \delta \in \Gamma^{m}_k \}$ forms a basis for $V^{\otimes m}$.  Since $\widehat{G}^{-1}=\widehat{G}$, $$e^{\phi}_{\alpha}=\frac{\phi(1)}{|\widehat{G}|}\sum_{\sigma\in \widehat{G}}\phi(\sigma^{-1})e_{\alpha\sigma}.$$
Since $G_{\alpha}=\{e\}$, the elements $\alpha \sigma$ with $\sigma \in G$ are distinct.  Also, since $a \notin \widehat{G}$, $\alpha\sigma \neq\alpha a$ for all $\sigma \in \widehat{G}$, which yields that $c_{ a}=0$. On the other hand, $G_{\alpha a}=a^{-1}G_{\alpha}a=\{e\}$, so for $\a \in \widehat{G}$, $(\alpha a)\sigma=\alpha a$ if and only if $\sigma=e$.  This implies that $d_{ a}=\frac{1}{|\widehat{G}|}\neq c_{\alpha a}=0$.  Thus $\{ e^{\phi}_{\alpha a},  e^{\phi}_{\alpha}\}\subseteq V^{\phi}_{\alpha}(G)$ is a linearly independent set and hence $\dim V^{\phi}_{\alpha}(G)\geq 2$.

By Proposition \ref{rebasis}, if $V^{\phi}_{\alpha}(G)$ were to have an o-basis, then it would have an o-basis containing $e^{\phi}_{\alpha}$, but, by (\ref{t2111}), this is not the case.  So, $V^{\phi}_{\alpha}(G)$ does not have an o-basis, which completes the proof.

\end{proof}

\begin{rem}
 \label{remark1}
  Theorem \ref{main1} shows that if $dim V >2$,  unlike the case for an irreducible character, it is possible that $V_\phi(G)$ has no o-basis when $\phi$ is a linear Brauer character. This holds when $dim V =2$ as well. To observe this, we let $p\neq 2$ and $4n$ is divisible by $p$ and $\phi=\hat{\chi}_{0}$ or $\hat{\chi'}_{0} $ . Consider $\alpha=(1,2,..,2,2)  \in \Gamma^{4n}_{\dim V}$.  Thus, for such $\alpha$, we have $G_{\alpha}=\{1,a^{2n+2}b\}$. Now by similar calculations done in  Theorem \ref{main1} we have  $\langle e^{\phi}_{\alpha \sigma_1 }, e^{\phi}_{\alpha \sigma_2} \rangle \neq 0$.
\end{rem}
 For the remaining of this section,  we denote
$$ \Pi=\left\{
         \begin{array}{ll}
           \{jp^t  | j \in E\cup O^1_1\cup O^1_2\}, & \hbox{if $p\neq2$ and $n$ even;} \\
           \{jp^t  | j \in E\cup O^2_1\cup O^2_2\}, & \hbox{if $p\neq2$ and $n$ odd;} \\
           \{ jp^t  | 0\leq j \leq \frac{l-1}{2} \}, & \hbox{if $p=2$.}
         \end{array}
       \right.
 $$
For $V_{\phi}(SD_{8n})$, where $\phi=\hat{\psi}_h, \hat{\psi'}_h$ such that $h \in \Pi$ is even,  the condition for the existence can be obtained in the same manner as $T_{4n}$ and $D_m$. This is because $\phi$ has a cyclic support and all values in the character tables of those groups  are consistence on $C=<a^r>$ if $h$ is even.  Then, we have;
\begin{thm}Let $G=SD_{8n}$, where $4n=lp^t$ with $l$ an integer not divisible by $p$ and let $\phi=\hat{\psi}_h, \hat{\psi'}_h$ such that $h \in \Pi$ be even.  Then,   $V_{\phi}(G)$ has an o-basis if and only if $\nu_2(\frac{2h}{l})<0$. Also, for each $\sigma \in C$,  $ e^{\phi}_{\gamma\sigma}\ \neq 0 \hbox{  if and only if  }\frac{ht_\gamma}{l}\in\mathbb{Z}.$
\end{thm}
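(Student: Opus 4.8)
The plan is to run, essentially verbatim, the computation carried out for the dicyclic groups in Section~4 (Propositions~\ref{dimen T} and~\ref{nonzeroT} together with the concluding theorem there), the only new point being a short check that its hypotheses apply. First I would record that, since $h$ is even, the degree-two character $\psi_h$ (resp. $\psi'_h$) of $SD_{8n}$ vanishes on every reflection class $[ba^i]$, so $\phi=\widehat{\psi}_h$ (resp. $\widehat{\psi'}_h$) vanishes on $\widehat{SD_{8n}}\setminus C$ with $C=\langle a^{p^t}\rangle$; moreover the evenness of $h$ forces $\psi_h(a^r)$ to be the \emph{real} number $2\cos(hr\pi/2n)$ on \emph{both} families of $a$-power classes occurring in the character tables (the $2i\sin$ entries arise only for odd index). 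Writing $4n=lp^t$ and $a^{jp^t}\in C$, this gives $\phi(a^{jp^t})=2\cos((2hj/l)\pi)$, exactly the value used for $T_{4n}$ under the dictionary $2n\leftrightarrow 4n$, $r\leftrightarrow a$, $b\leftrightarrow h$; in particular $l\nmid 2h$, so $2h/l\notin\mathbb{Z}$. Thus $\phi$ has cyclic support on $C$, and Proposition~\ref{separated lemma} reduces the question to whether $V^\phi_\gamma(C)$ has an o-basis for each $\gamma\in\Delta$.

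Fix $\gamma\in\Delta$. Since $C$ is cyclic, $C\cap G_\gamma=\langle a^{t_\gamma p^t}\rangle$ with $t_\gamma=|C|/|C\cap G_\gamma|$ a divisor of $l$, so Theorem~\ref{dimprop} applies. Evaluating $v_j=\sum_{h'\in C\cap G_\gamma}\phi(h'\tau^{t_\gamma-j})$ by the same trigonometric collapse as in Proposition~\ref{dimen T} (a geometric sum of unit complex numbers whose number of terms times the common angle is a multiple of $2\pi$) yields $v_j=2(l/t_\gamma)\cos((2hj/l)\pi)$ when $ht_\gamma/l\in\mathbb{Z}$ and $v_j=0$ otherwise; hence $\dim V^\phi_\gamma(C)=2$ if $ht_\gamma/l\in\mathbb{Z}$ and $\dim V^\phi_\gamma(C)=0$ otherwise. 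For the non-vanishing assertion I would, mirroring Proposition~\ref{nonzeroT}, expand $\langle e^\phi_{\gamma\sigma},e^\phi_{\gamma\sigma}\rangle$ through~(\ref{inner C}): the inner sum $\sum_{\tau\in C\cap G_\gamma}\overline{\phi(\mu\tau)}$ equals $(l/t_\gamma)\overline{\phi(\mu)}$ when $ht_\gamma/l\in\mathbb{Z}$ and $0$ otherwise (again a geometric-sum cancellation), so the squared norm is a positive multiple of $\sum_j\cos^2((2hj/l)\pi)>0$ in the first case and is $0$ in the second. As the form is a genuine inner product, $e^\phi_{\gamma\sigma}\neq 0$ for $\sigma\in C$ precisely when $ht_\gamma/l\in\mathbb{Z}$.

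Finally, only the $\gamma$ with $ht_\gamma/l\in\mathbb{Z}$ matter (the others give the zero space, which has the empty o-basis), and for such $\gamma$ the two-dimensional space $V^\phi_\gamma(C)$ is spanned by the nonzero vectors $e^\phi_{\gamma\sigma_i}$, $\sigma_i=a^{ip^t}$. Using~(\ref{inner C}) and the product-to-sum identity exactly as in the $T_{4n}$ theorem gives $\langle e^\phi_{\gamma\sigma_i},e^\phi_{\gamma\sigma_j}\rangle=0\iff l\cos((i-j)(2h/l)\pi)=0$ (the remaining geometric sum vanishing because $2h/l\notin\mathbb{Z}$), so $V^\phi_\gamma(C)$ has an o-basis iff there are distinct $i,j\in\{1,\dots,t_\gamma\}$ with $\cos((i-j)(2h/l)\pi)=0$. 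If $V_\phi(G)$ has an o-basis, take a $\gamma\in\Delta$ with $C\cap G_\gamma=\{e\}$ (such $\gamma$ exist once $\dim V\geq 2$, and then $ht_\gamma/l=h\in\mathbb{Z}$, $t_\gamma=l$); the existence of the required $i,j$ then entails, by a short $2$-adic computation, that $\nu_2(2h/l)<0$. Conversely, if $\nu_2(2h/l)=-k<0$ then $h/l=m/2^{k+1}$ with $m$ odd, the relation $ht_\gamma/l\in\mathbb{Z}$ forces $2^{k+1}\mid t_\gamma$, and $i_0=2^{k-1}+1$, $j_0=1$ give $\cos((i_0-j_0)(2h/l)\pi)=\cos(m\pi/2)=0$, so $\{e^\phi_{\gamma a^{(2^{k-1}+1)p^t}},\,e^\phi_{\gamma a^{p^t}}\}$ is an o-basis of every relevant $V^\phi_\gamma(C)$; Proposition~\ref{separated lemma} and Theorem~\ref{thm111} then conclude. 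The only step I expect to be non-routine is the first: checking that, for even $h$, the relevant columns of the several $SD_{8n}$ character tables genuinely reduce to the same $2\cos$ values as for $T_{4n}$ and $D_m$ in every sub-case ($p$ odd with $n$ even and $n$ odd, and $p=2$, where $\widehat{SD_{8n}}=C$ itself), so that the Section~4 machinery transports without change.
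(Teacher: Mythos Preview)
Your proposal is correct and follows essentially the same approach as the paper: the paper's proof is just the remark that, for even $h$, the values of $\psi_h$ (resp.\ $\psi'_h$) on all $a$-power classes of $SD_{8n}$ agree with the $2\cos$ entries used for $T_{4n}$ and $D_m$, so the entire Section~4 computation (Propositions~\ref{dimen T}, \ref{nonzeroT}, and the concluding theorem) transports verbatim under the dictionary you describe. You have simply spelled out the details that the paper leaves implicit.
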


For the case  $h \in \Pi$ is odd, we first compute the dimension of $V_{\gamma}^{\phi}(SD_{8n})$.
\begin{prop}\label{dimen SD}Let $G=SD_{8n}$, where $4n=lp^t$ with $p\nmid l$ and let $\phi=\hat{\psi}_h, \hat{\psi'}_h$ such that $h \in \Pi$ be odd. For $\gamma\in \Delta$ such $C\cap G_\gamma=<a^{t_\gamma p^t}>$, where $t_\gamma=\frac{l}{|C\cap G_\gamma|}$, then
$$\dim(V_\gamma^\phi(C))=\left\{
                           \begin{array}{ll}
                             4, & \hbox{if $\frac{ht_\gamma }{l}\in \mathbb{Z}$;} \\
                             0, & \hbox{if $\frac{ht_\gamma }{l}\notin \mathbb{Z}$.}
                           \end{array}
                         \right.
  $$
\end{prop}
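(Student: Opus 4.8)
The plan is to specialize Theorem~\ref{dimprop} exactly as in the proof of Proposition~\ref{dimen T}, with $\tau=a^{p^{t}}$, $C=\langle a^{p^{t}}\rangle$ and $C\cap G_{\gamma}=\langle a^{t_{\gamma}p^{t}}\rangle=\langle\tau^{t_{\gamma}}\rangle$, so that $\dim V_{\gamma}^{\phi}(C)=t_{\gamma}-d_{\gamma}$ with $d_{\gamma}$ the nullity of the circulant $M_{\gamma}^{\mathrm{cir}}=(v_{0},\dots,v_{t_{\gamma}-1})$, $v_{j}=\sum_{g\in C\cap G_{\gamma}}\phi(g\tau^{t_{\gamma}-j})$. (Note that although $\hat\psi_{h}$, $\hat\psi'_{h}$ may vanish at some powers of $\tau$ when $h$ is odd, its support still lies inside the subgroup $C$, so formula~(\ref{inner C}) and the circulant reduction of Theorem~\ref{dimprop} remain available.) The point at which the odd-$h$ case genuinely differs from Proposition~\ref{dimen T} is the shape of the character: for odd $h$ one has $\psi_{h}(a^{r})=2\cos(\tfrac{hr\pi}{2n})$ for even $r$ and $\psi_{h}(a^{r})=2i\sin(\tfrac{hr\pi}{2n})$ for odd $r$ (and likewise for $\psi'_{h}$), rather than a single cosine on all powers of $a$. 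I would first observe that when $\tfrac{ht_{\gamma}}{l}\in\mathbb{Z}$ the integer $t_{\gamma}$ is necessarily even (otherwise $ht_{\gamma}$ is odd while $l\mid ht_{\gamma}$ and $4\mid l$), and that, $p$ being odd, the exponent of $a$ in a generic summand $g\tau^{t_{\gamma}-j}$ has the same parity as $j$; so the cosine/sine dichotomy occurs cleanly at the level of the index $j$ and does not interfere with the sum over $C\cap G_{\gamma}$.

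Carrying out the inner geometric sums (with $\tfrac{hp^{t}\pi}{2n}=\tfrac{2h\pi}{l}$ and the sum-to-product identities), one gets two cases. If $\tfrac{ht_{\gamma}}{l}\notin\mathbb{Z}$, the sums $\sum_{m=1}^{l/t_{\gamma}}e^{\pm 2\pi i\,mht_{\gamma}/l}$ vanish, so every $v_{j}=0$, hence $d_{\gamma}=t_{\gamma}$ and $\dim V_{\gamma}^{\phi}(C)=0$; this also gives the non-vanishing criterion for $e^{\phi}_{\gamma\sigma}$ on $C$ in analogy with Proposition~\ref{nonzeroT}. If $\tfrac{ht_{\gamma}}{l}\in\mathbb{Z}$, the same computation yields
\[
v_{j}=\begin{cases}
\ 2\,\dfrac{l}{t_{\gamma}}\cos\!\left(\dfrac{2hj\pi}{l}\right), & j\ \text{even},\\[3mm]
-2i\,\dfrac{l}{t_{\gamma}}\sin\!\left(\dfrac{2hj\pi}{l}\right), & j\ \text{odd},
\end{cases}
\]
whereas in Proposition~\ref{dimen T} the entry $v_{j}$ is a single cosine for all $j$. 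It then remains to compute the nullity $d_{\gamma}$ of the circulant with this symbol.

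The nullity count is the crux of the argument. The device I would use is to separate the polynomial $P_{v}(x)=\sum_{j}v_{j}x^{j}$ into its even- and odd-degree parts, $P_{v}(x)=E(x^{2})+x\,O(x^{2})$ with $\deg E,\deg O<t_{\gamma}/2$, and to evaluate $E$ and $O$ on the $(t_{\gamma}/2)$-th roots of unity: since $\tfrac{ht_{\gamma}}{l}\in\mathbb{Z}$, the base root $\omega:=e^{4\pi ih/l}$ has order dividing $t_{\gamma}/2$, and the geometric-sum computation shows $E$ and $O$ vanish at every $(t_{\gamma}/2)$-th root of unity except the two values $\omega^{\pm1}$. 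For each $(t_{\gamma}/2)$-th root $\eta$ one then reads off, from the pair $(E(\eta),O(\eta))$, how many of the two square roots $\pm\sqrt{\eta}$ are zeros of $P_{v}$, and sums these contributions. I expect this bookkeeping to be the main obstacle: one must manage the interaction between the parity splitting of the $v_{j}$ and the discrete Fourier transform on $\mathbb{Z}/t_{\gamma}\mathbb{Z}$ (which mixes even and odd indices, a phenomenon absent from the single-cosine case), and pin down precisely which of the surviving square roots are genuine roots of $P_{v}$, so as to conclude that $d_{\gamma}=t_{\gamma}-4$ and hence $\dim V_{\gamma}^{\phi}(C)=4$. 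Finally, the subcases $n$ even and $n$ odd are treated together, since $\hat\psi_{h}$ and $\hat\psi'_{h}$ agree on the columns indexed by powers of $a$ and vanish on the remaining columns.
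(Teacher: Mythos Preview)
Your proposal is correct and follows essentially the same route as the paper: both specialize Theorem~\ref{dimprop} with $\tau=a^{p^{t}}$, compute the circulant entries $v_{j}$ by splitting according to the parity of the exponent (obtaining exactly your displayed formula for $v_{j}$ in the case $\tfrac{ht_{\gamma}}{l}\in\mathbb{Z}$, and $v_{j}=0$ otherwise), and then count the roots of $P_{v}$ among the $t_{\gamma}$-th roots of unity to get $d_{\gamma}=t_{\gamma}-4$.

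The only noteworthy difference is in the bookkeeping of that last root count. You propose the factorization $P_{v}(x)=E(x^{2})+xO(x^{2})$ and a two-step analysis via the $(t_{\gamma}/2)$-th roots of unity; the paper instead evaluates $P_{v}(e^{2\pi si/t_{\gamma}})$ directly, separates it into real and imaginary parts (the imaginary part vanishes identically since $\tfrac{ht_{\gamma}}{l}\in\mathbb{Z}$), and reads off that the real part is nonzero precisely when $2(\tfrac{h}{l}\pm\tfrac{s}{t_{\gamma}})\in\mathbb{Z}$, which yields four explicit values $s_{1},\dots,s_{4}$ shown to be distinct because $\tfrac{4h}{l}\notin\mathbb{Z}$. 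The paper's direct evaluation is a bit shorter and avoids the square-root bookkeeping you flag as the main obstacle, but the two computations are equivalent reorganizations of the same discrete Fourier analysis.
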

\begin{proof}  Since $G$ has cyclic support with $C=<a^{p^t}>$ and $C\cap G_\gamma=<a^{t_\gamma p^t}>$, for each $\phi$, we can compute the dimension by using Proposition \ref{dimprop}.  By character tables, we compute $v_j=\sum_{h\in C\cap G_\gamma}\phi(h\tau^{t_\gamma-j})$, for the different case of $j$ and $t_\gamma$.
 If $t_\gamma$ is odd, then, for even $j$,
$$\begin{array}{ccl}
    v_j& = & \sum_{m=1}^{l/t_{\gamma}}\phi(r^{(mt_\gamma-j)p^t}) \\
     & = &i \sum_{k=1}^{l/2t_{\gamma}}2\sin\left((2k-1)(\frac{2ht_\gamma}{l})\pi-(\frac{2hj}{l})\pi  \right) \\ &+&\sum_{k=1}^{l/2t_{\gamma}}2\cos\left((2k)(\frac{2ht_\gamma}{l})\pi-(\frac{2hj}{l})\pi  \right)\\
     &=&0,
  \end{array}
  $$
and for odd $j$,
$$\begin{array}{ccl}
    v_j& = & \sum_{m=1}^{l/t_{\gamma}}\phi(r^{(mt_\gamma-j)p^t}) \\
     & = & \sum_{k=1}^{l/2t_{\gamma}}2\cos\left((2k-1)(\frac{2ht_\gamma}{l})\pi-(\frac{2hj}{l})\pi  \right) \\ &+&i\sum_{k=1}^{l/2t_{\gamma}}2\sin\left((2k)(\frac{2ht_\gamma}{l})\pi-(\frac{2hj}{l})\pi  \right)\\
     &=&0,
  \end{array}
  $$
since $\frac{2ht_\gamma}{l} \notin \mathbb{Z}$ (because $h, t_\gamma$ are odd and then $4\mid l$).  So, if $t_\gamma$ is odd (i.e. $\frac{ht_\gamma }{l}\notin \mathbb{Z}$), then $d_\gamma=t_\gamma$ and thus $\dim(V_\gamma^\phi(C))=t_\gamma-t_\gamma=0$.

Similarly, if $t_\gamma$ is even, we compute that
$$v_j=\left\{
        \begin{array}{ll}
          0, & \hbox{$\frac{ht_\gamma }{l}\notin \mathbb{Z}$;} \\
          -\frac{2il}{t_\gamma}\sin(\frac{2hj}{l}\pi), & \hbox{$\frac{ht_\gamma }{l}\in \mathbb{Z}$ and $j$ odd} \\
          \frac{2l}{t_\gamma}\cos(\frac{2hj}{l}\pi), & \hbox{$\frac{ht_\gamma }{l}\in \mathbb{Z}$ and $j$ even.}
        \end{array}
      \right.
  $$
So, if $t_\gamma$ is even and $\frac{ht_\gamma }{l}\notin \mathbb{Z}$, then $d_\gamma=t_\gamma$ and thus $\dim(V_\gamma^\phi(C))=t_\gamma-t_\gamma=0$.
For $d_\gamma$ in which $t_\gamma$ is even and $\frac{ht_\gamma}{l}\in\mathbb{Z}$, we have that $\sum^{t_\gamma-1}_{j=0}v_je^{\frac{2\pi sji}{t_\gamma}}=0$ if and only if
$$\frac{l}{t_\gamma}\left[ \sum^{\frac{t_\gamma}{2}-1}_{k=0}2\cos((\frac{4hk}{l})\pi)\cos((\frac{4sk}{t_\gamma})\pi) + \sum^{\frac{t_\gamma}{2}-1}_{k=0}2\sin((\frac{2h(2k+1)}{l})\pi)\sin((\frac{2s(2k+1)}{t_\gamma})\pi) \right],$$
and
$$\frac{il}{t_\gamma}\left[ \sum^{\frac{t_\gamma}{2}-1}_{k=0}2\cos((\frac{4hk}{l})\pi)\sin((\frac{4sk}{t_\gamma})\pi) - \sum^{\frac{t_\gamma}{2}-1}_{k=0}2\sin((\frac{2h(2k+1)}{l})\pi)\cos((\frac{2s(2k+1)}{t_\gamma})\pi) \right],$$
are simultaneously zero.  Since $\frac{ht_\gamma}{l}\in\mathbb{Z}$, the second sum is always zero and the first sum is zero for each $0\leq s< t_\gamma$ except for $2\left(\frac{h}{l}+\frac{s}{t_\gamma}\right) \in \mathbb{Z}$ or $2\left(\frac{h}{l}-\frac{s}{t_\gamma}\right) \in \mathbb{Z}$.  Since $0\leq \frac{s}{t_\gamma}< 1$, $$\hbox{$\frac{2h}{l}\leq 2\left(\frac{h}{l}+\frac{s}{t_\gamma}\right)<\frac{2h}{l}+2$ and  $\frac{2h}{l}-2<2\left(\frac{h}{l}-\frac{s}{t_\gamma}\right)\leq\frac{2h}{l}$}. $$  Precisely, if $s$ belongs to
$$
    \{ s_1:=\frac{t_\gamma}{2}\lceil\frac{2h}{l}\rceil-\frac{ht_\gamma}{l},  s_2:=\frac{t_\gamma}{2}\lceil\frac{2h}{l}\rceil-\frac{ht_\gamma}{l}+\frac{t_\gamma}{2},
     s_3:=\frac{ht_\gamma}{l}-\frac{t_\gamma}{2}\lfloor\frac{2h}{l}\rfloor, s_4:=\frac{ht_\gamma}{l}-\frac{ t_\gamma}{2}\lfloor\frac{2h}{l}\rfloor+\frac{t_\gamma}{2}\}
   $$
then the first sum will be not zero.  Here, $\lceil r\rceil$ and $\lfloor r \rfloor$ are the ceil function and floor function of the real number $r$ respectively.  Since $t_\gamma >0$ and $\frac{4h}{l} \notin \mathbb{Z}$ for each odd $h \in \Pi$, $s_1, s_2, s_3, s_4$ are all distinct.  Hence $d_\gamma= t_\gamma -4$ and thus the results follow.
\end{proof}
Now, we have;
\begin{thm}  Let $G=SD_{8n}$, where $4n=lp^t$ with $p\nmid l$ and let $\phi=\hat{\psi}_h, \hat{\psi'}_h $, where $ h \in \Pi$ be odd.  If $\dim(V) > 1$, then,   $V_{\phi}(G)$  does not have an o-basis.  Also, for each $\sigma \in C$,  $ e^{\phi}_{\gamma\sigma}\ \neq 0 \hbox{  if and only if  }\frac{ht_\gamma}{l}\in\mathbb{Z}.$
\end{thm}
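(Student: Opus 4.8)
The plan is to use Proposition~\ref{separated lemma} to reduce everything to a single orbit: it suffices to exhibit one $\gamma\in\Delta$ for which $V^{\phi}_{\gamma}(C)$ has no o-basis, and then to read off an arithmetic obstruction from the explicit Gram entries of the spanning set $\{e^{\phi}_{\gamma a^{jp^{t}}}:0\le j<l\}$. First I would note that, since $\dim V>1$, the orbit of $(1,2,2,\dots,2)\in\Gamma^{4n}_{\dim V}$ has a representative $\gamma\in\Delta$ whose stabilizer is a point stabilizer for the action of $SD_{8n}$ on $\{1,\dots,4n\}$; because $\langle a\rangle$ acts freely on that set, $\langle a\rangle\cap G_{\gamma}=\{e\}$, hence $C\cap G_{\gamma}=\{e\}$, $t_{\gamma}=l$, and $\tfrac{ht_{\gamma}}{l}=h\in\mathbb Z$, so Proposition~\ref{dimen SD} gives $\dim V^{\phi}_{\gamma}(C)=4$. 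Thus an o-basis of $V^{\phi}_{\gamma}(C)$ would have to consist of four pairwise orthogonal nonzero tensors $e^{\phi}_{\gamma a^{j_{1}p^{t}}},\dots,e^{\phi}_{\gamma a^{j_{4}p^{t}}}$ with $0\le j_{a}<l$ distinct.

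Then I would compute the inner products. By \eqref{inner C} with $C$ abelian and $C\cap G_{\gamma}=\{e\}$ one has $\langle e^{\phi}_{\gamma a^{ip^{t}}},e^{\phi}_{\gamma a^{jp^{t}}}\rangle=\tfrac{|\phi(e)|^{2}}{|S|^{2}}F(j-i)$, where $F(d)=\sum_{k=0}^{l-1}\phi(a^{kp^{t}})\overline{\phi(a^{(k+d)p^{t}})}$. Since $h$ is odd, the character tables give $\phi(a^{kp^{t}})=e^{ik\beta}+(-1)^{k}e^{-ik\beta}$ with $\beta=\tfrac{2h\pi}{l}$ (so it equals $2\cos k\beta$ for $k$ even and $2i\sin k\beta$ for $k$ odd), and this expression is $l$-periodic in $k$ because $h\in\mathbb Z$ and $l$ is even (recall $4\mid l$, as an odd $h\in\Pi$ forces the odd index sets to be nonempty). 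Expanding the product in $F(d)$ produces a constant contribution $l\bigl(e^{-id\beta}+(-1)^{d}e^{id\beta}\bigr)$ together with two geometric sums $\sum_{k=0}^{l-1}(-e^{\pm 2i\beta})^{k}$; these vanish because $l$ is even and $-e^{\pm 2i\beta}\neq 1$, the latter because $l\nmid 4h$ (writing $h=jp^{t}$, oddness of $j$ puts $j$ in $(0,l/4)\cup(l/2,3l/4)$, so $4j\in(0,l)\cup(2l,3l)$ meets no positive multiple of $l$, and $\gcd(l,p)=1$). Hence $F(d)=2l\cos(d\beta)$ for $d$ even and $F(d)=-2il\sin(d\beta)$ for $d$ odd, so
\[
\langle e^{\phi}_{\gamma a^{ip^{t}}},e^{\phi}_{\gamma a^{jp^{t}}}\rangle=0
\quad\Longleftrightarrow\quad
\begin{cases}
l\mid 2(j-i)h, & j-i\ \text{odd},\\
\tfrac{4(j-i)h}{l}\ \text{is an odd integer}, & j-i\ \text{even}.
\end{cases}
\]

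Next I would run the $2$-adic obstruction on a hypothetical o-basis $\{e^{\phi}_{\gamma a^{j_{a}p^{t}}}\}_{a=1}^{4}$. If two of the $j_{a}$ had opposite parity, then $d=j_{a}-j_{b}$ is odd and orthogonality forces $l\mid 2dh$ with $d,h$ odd, i.e.\ $\nu_{2}(l)\le \nu_{2}(2dh)=1$, contradicting $4\mid l$. So all $j_{a}$ share one parity and every difference $d=j_{a}-j_{b}$ is even; orthogonality then forces $\tfrac{4dh}{l}$ to be an odd integer, which in particular requires $\nu_{2}(4dh)=\nu_{2}(l)$, i.e.\ $\nu_{2}(d)=\nu_{2}(l)-2=:c\ge 0$. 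If $c=0$ this contradicts $d$ even, so no two of the tensors can even be orthogonal. If $c\ge 1$, then all six differences $j_{a}-j_{b}$ have $2$-adic valuation exactly $c$, i.e.\ the $j_{a}$ are mutually congruent modulo $2^{c}$ but pairwise incongruent modulo $2^{c+1}$; but the fibre of $\mathbb Z/2^{c+1}\to\mathbb Z/2^{c}$ over the common residue has only two elements, so at most two of $j_{1},\dots,j_{4}$ can be pairwise incongruent modulo $2^{c+1}$ — a contradiction. Hence $V^{\phi}_{\gamma}(C)$ has no o-basis and, by Proposition~\ref{separated lemma}, neither does $V_{\phi}(G)$. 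The non-vanishing claim I would obtain from the same computation, now keeping the sum over $\tau\in C\cap G_{\gamma}$ in \eqref{inner C}: if $\tfrac{ht_{\gamma}}{l}\notin\mathbb Z$ then $\dim V^{\phi}_{\gamma}(C)=0$ by Proposition~\ref{dimen SD}, so $e^{\phi}_{\gamma\sigma}=0$; if $\tfrac{ht_{\gamma}}{l}\in\mathbb Z$ then $t_{\gamma}$ is even and $\langle e^{\phi}_{\gamma\sigma},e^{\phi}_{\gamma\sigma}\rangle$ is a positive multiple of $\sum_{m}F(mt_{\gamma})$, which equals $\tfrac{l}{t_{\gamma}}\cdot 2l>0$, so $e^{\phi}_{\gamma\sigma}\neq 0$ (this parallels Proposition~\ref{nonzeroT}).

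I expect the main work to be the clean evaluation of $F(d)$ — verifying the geometric-sum cancellation from $4\mid l$ and $l\nmid 4h$, which is the $SD_{8n}$-analogue of the computation behind Proposition~\ref{dimen SD} — together with the short but decisive combinatorial point that four integers cannot have all pairwise $2$-adic valuations equal. It is precisely this last point that rules out an o-basis even though the orbital subspace $V^{\phi}_{\gamma}(C)$ has the even dimension $4$ (contrast the preceding theorem, where $h$ even yields dimension $2$ and an o-basis can exist).
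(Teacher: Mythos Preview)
Your proposal is correct and follows essentially the same approach as the paper: reduce via Proposition~\ref{separated lemma} to the orbit of $\gamma=(1,2,\dots,2)$, compute the Gram entries of $\{e^{\phi}_{\gamma a^{jp^{t}}}\}$ to obtain the orthogonality conditions \eqref{hodd} (your $\cos$/$\sin$ dichotomy on the parity of $d=j-i$ is exactly the paper's dichotomy on the parity of $i,j$), then derive a $2$-adic contradiction to the existence of four pairwise orthogonal tensors.

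Two minor remarks. First, your exponential identity $\phi(a^{kp^{t}})=e^{ik\beta}+(-1)^{k}e^{-ik\beta}$ streamlines the inner-product computation compared with the paper's direct product-to-sum manipulations, and your pigeonhole formulation (``at most two residues in a fibre of $\mathbb{Z}/2^{c+1}\to\mathbb{Z}/2^{c}$'') packages the same contradiction the paper obtains from $o_{2}-o_{3}=o_{1}$ among odd integers. Second, you are right to claim only $C\cap G_{\gamma}=\{e\}$: in the embedding $T$ used here one has $T(ba^{2n-2})(1)=\overline{(2n-1)^{2}}=1$, so $G_{\gamma}$ need not be trivial, but since $\langle a\rangle$ acts freely one still gets $C\cap G_{\gamma}=\{e\}$ and $t_{\gamma}=l$, which is all that Proposition~\ref{dimen SD} and \eqref{inner C} require.
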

\begin{proof}  By Proposition \ref{separated lemma}, it is enough to focus on $V^{\phi}_{\gamma}(C)$.  Also, in the proof of Proposition \ref{prop dim}, we have $V_\gamma^\phi(C)  =  \langle e^{\phi}_{\gamma \sigma_j} | j=1,2,...,t_\gamma \rangle$, where $\sigma_j=a^{jp^t}$ and $t_{\gamma}=\frac{\mid C\mid}{\mid C\cap G_\gamma  \mid}$.  By (\ref{inner C}) and the character tables, we compute that, for even $i, j$ such $1\leq i, j \leq t_\gamma$,
$$ \begin{array}{ccl}
      \langle e^{\phi}_{\gamma \sigma_i}, e^{\phi}_{\gamma \sigma_j} \rangle =0& \Longleftrightarrow & \sum_{g\in C}\phi(g\sigma_i)\overline{\phi(g\sigma_j)}=0 \\
      & \Longleftrightarrow & \sum^{l-1}_{k=0}\phi(a^{(k+i)p^{t}})\overline{\phi(a^{(k+j)p^{t}})}=0\\
&\Longleftrightarrow&2\left[\sum^{\frac{l}{2}-1}_{k=0}2\cos((2k+i)\frac{2h}{l}\pi)\cos((2k+j)\frac{2h}{l}\pi)\right]\\
&+&2\left[\sum^{\frac{l}{2}-1}_{k=0}2\sin((2k+1+i)\frac{2h}{l}\pi)\cos((2k+1+j)\frac{2h}{l}\pi)\right]=0\\
&\Longleftrightarrow&2\left[\sum^{\frac{l}{2}-1}_{k=0}\cos((4k+i+j)\frac{2h}{l}\pi)+\sum^{\frac{l}{2}-1}_{k=0}\cos((i-j)\frac{2h}{l}\pi)\right]\\
&+&2\left[\sum^{\frac{l}{2}-1}_{k=0}\cos((i-j)\frac{2h}{l}\pi)-\sum^{\frac{l}{2}-1}_{k=0}\cos((4k+i+j)\frac{2h}{l}\pi)\right]=0\\
&\Longleftrightarrow&2l\cos((i-j)\frac{2h}{l}\pi)=0, \hbox{  (since $\frac{4h}{l} \notin \mathbb{Z}$).}
   \end{array}
 $$
Similar arguments work well for the remaining cases.  Thus, we can conclude that
\begin{equation}\label{hodd}
     \langle e^{\phi}_{\gamma \sigma_i}, e^{\phi}_{\gamma \sigma_j} \rangle =0 \Longleftrightarrow \left\{
                                                                                                   \begin{array}{ll}
                                                                                                     \cos((i-j)\frac{2h}{l}\pi)=0, & \hbox{if $i, j$ are both even or both odd;} \\
                                                                                                     \sin((i-j)\frac{2h}{l}\pi)=0, & \hbox{if ortherwise.}
                                                                                                   \end{array}
                                                                                                 \right.
\end{equation}

We consider $\gamma=(1,2,2,...,2) \in \Gamma^{4n}_{\dim(V)}$.   Since $\dim(V)>1$, $\gamma \in \Delta$ and it is not hard to see that $G_\gamma=\{e\}$.  So, $t_\gamma= l$ and then $\frac{ht_\gamma}{l} \in \mathbb{Z}$.  By Proposition \ref{dimen SD}, $\dim(V^{\phi}_{\gamma}(C))=4$.
Thus, if $V^{\phi}_{\gamma}(C)$ has an o-basis, then there exist distinct $1\leq i_1, i_2, i_3, i_4 \leq t_\gamma$ such that $\{e^{\phi}_{\gamma \sigma_{i_1}}, e^{\phi}_{\gamma \sigma_{i_2}}, e^{\phi}_{\gamma \sigma_{i_3}}, e^{\phi}_{\gamma \sigma_{i_4}},\}$ forms an o-basis.   Since $h$ is odd and $4\mid l$, $\nu(\frac{2h}{l})= -k$, for some positive integer $k$.  Hence, if there are at least three of $ i_1, i_2, i_3, i_4$ which are all even or all odd, say $ i_1, i_2, i_3$, then, by (\ref{hodd}), there must exist odd integers $o_1,o_2, o_3$ such that
$$ \begin{array}{ll}
     i_1-i_2 & =o_1\cdot 2^{k-1} \\
     i_1-i_3 & =o_2\cdot 2^{k-1} \\
     i_2-i_3 & =o_3\cdot 2^{k-1}.
   \end{array}
  $$
 This implies that $o_2-o_3=o_1$, which is a contradiction.  If there are exactly two of $ i_1, i_2, i_3, i_4$ which are all odd, say $ i_1, i_2$, then, by (\ref{hodd}), there must exist integer $s$ such that $i_1-i_3  =s\cdot 2^{k} $.   This implies that $i_1=i_3+s\cdot 2^{k}$ is even (because $i_3$ is even), which is a contradiction.  Therefore, $V^{\phi}_{\gamma}(C)$ does not have an o-basis and by Proposition \ref{separated lemma}, we finish the proof for the first statement.  The second statements is a consequence of Proposition \ref{dimen SD} and a direct calculation as in Proposition \ref{nonzeroT}.
\end{proof}

\section{Acknowledgements}

The authors are so grateful to Professor Hjalmar Rosengren for valuable comments and for reviewing earlier drafts very carefully.  The second author would like to thank Naresuan University for financial supports on the project R2558C240.

\bigskip

\address Mahdi Hormozi \\
{ Department of Mathematical Sciences, Division of Mathematics,\\ Chalmers University
of Technology and University of Gothenburg,\\ Gothenburg 41296, Sweden}\\
\email{hormozi@chalmers.se}\\

\address Kijti Rodtes \\
{ Department of Mathematics, Faculty of Science,\\ Naresuan University \\ Phitsanulok 65000, Thailand}\\
\email{kijtir@nu.ac.th}\\
\

\end{document}